\newtheorem{theorem}{Theorem}[section]
\numberwithin{theorem}{section}
\newtheorem{lemma}[theorem]{Lemma}
\newtheorem{corollary}[theorem]{Corollary}
\newtheorem{proposition}[theorem]{Proposition}
\newtheorem{definition}[theorem]{Definition}
\newtheorem{remark}[theorem]{Remark}
\numberwithin{equation}{section}
\begin{document}
\title[Reiterated homogenization for elliptic operators in Orlicz setting] 
{\ Reiterated Homogenization of nonlinear degenerate
elliptic operators with nonstandard growth\  \ \ }\
\author{Joel Fotso Tachago$^{\ddagger }$}
\curraddr{$^{\ddagger }$University of Bamenda, Higher Teachers Trainning
College, Department of Mathematics, P.O. Box 39, Bambili, Cameroon}
\email{fotsotachago@yahoo.fr}
\author{Hubert Nnang$^{\dagger }$}
\curraddr{$^{\dagger }$University of Yaounde I, \'{E}cole Normale Sup\'{e}%
rieure de Yaound\'{e}, P.O. Box 47 Yaounde, Cameroon.}
\email{hnnang@uy1.uninet.cm}
\author{Elvira Zappale$^{\intercal }$}
\curraddr{Dipartimento di Scienze di Base e Applicate per l'Ingegneria, Sapienza - Universit\`{a} 
di Roma, Via Antonio Scarpa, 16, Roma 10161, Italy }
\email{elvira.zappale@uniroma1.it}
\subjclass{35B27, 35B40, 35J25, 35J60, 35J70}
\keywords{Elliptic Operators, reiterated two-scale convergence,
Orlicz Sobolev Spaces.}

\begin{abstract}
It is shown by means of reiterated two-scale convergence in the Sobolev-Orlicz setting, that the
sequence of solutions of a class of highly oscillatory  problems involving
nonlinear elliptic operators with nonstandard growth, converges
to  a  solution of a suitable homogeneous nonlinear elliptic equation associated to an operator with nonstandard growth.
\end{abstract}

\maketitle
\section{Introduction}\label{sec1}

\bigskip 

\noindent We are interested in the limiting behaviour (as $0<\varepsilon \rightarrow 0$) of the sequence of solutions of the problems%
\begin{equation}\label{1.1}
-{\rm div}\left[ a\left( \frac{x}{\varepsilon },\frac{x}{\varepsilon ^{2}}%
,u_{\varepsilon },Du_{\varepsilon }\right) \right] =f\text{ \ in }\Omega
,u_{\varepsilon }\in W_{0}^{1}L^{\Phi }\left( \Omega
\right) ,\text{ }
\end{equation}%
with $\Omega $ a regular bounded open set in $
\mathbb{R}^{d},d\geq 2,$ $D$ and ${\rm div}$ denoting gradient and divergence operators, respectively,
$f\in L^{d}\left( \Omega \right) \cap L^{\widetilde{\Phi }}\left( \Omega
\right) $, $a:=(a_i)_{1\leq i\leq d}:
\mathbb{R}
^{d}\times 
\mathbb{R}
^{d}\times 
\mathbb{R}
\times 
\mathbb{R}^{d}\rightarrow 
\mathbb{R}^{d}$ satisfying the following conditions:

$\left( H_{1}\right)$ For all $\left( \zeta ,\lambda \right) \in 
\mathbb{R}
\times 
\mathbb{R}
^{d},$ the function $\left( y,z\right) \longrightarrow a\left( y,z,\zeta
,\lambda \right) $ from $
\mathbb{R}
^{d}\times 
\mathbb{R}
^{d}$ into $
\mathbb{R}^{d}$ is of Caratheodory type, that is:

$\left( i\right) $ For each $z\in 
\mathbb{R}^{d},$ the function $y\longrightarrow a\left( y,z,\zeta ,\lambda \right) $
is measurable from $%
\mathbb{R}^{d}$ to $
\mathbb{R}^{d}$

$\left( ii\right) $ For almost all $y\in 
\mathbb{R}
^{d},$ the function $z\longrightarrow a\left( y,z,\zeta ,\lambda \right) $ is
continuous from $\mathbb{R}^{d}$ to $\mathbb{R}
^{d}$ with $a\left( \cdot, \cdot,0,\omega \right) \in L^{\infty }\left( 
\mathbb{R}_{y}^{d}\times 
\mathbb{R}
_{z}^{d}\right) ,\omega $ being the origin in $
\mathbb{R}^{d}.$

\noindent $\left( H_{2}\right) $ There are $N-$functions $\Phi ,\Psi :\left[ 0,+\infty
\right[ \rightarrow \left[ 0,+\infty \right[ ,\Phi ,\Psi $ being twice
continuously differentiable with%
\begin{equation}
1<\rho _{0}\leq \frac{t\psi \left( t\right) }{\Psi \left( t\right) }\leq
\rho _{1}\leq \frac{t\phi \left( t\right) }{\Phi \left( t\right) }\leq \rho
_{2}\text{ for all }t>0,  \label{1.2}
\end{equation}%
where \footnote{Recall that, as observed in \cite{MR} and \cite{Cle}, \eqref{1.2} guarantee that $\Phi, \Psi$ and their conjugates verify $\Delta_2$ condition} $\rho _{0},\rho _{1},\rho _{2}$ are constants and $\Phi ,\Psi $
are odd, increasing homeomorphisms from $
\mathbb{R}
$ to $
\mathbb{R}
$ such that $\Phi \left( t\right) =\int_{0}^{t}\phi \left( s\right) ds$ and $%
\Psi \left( t\right) =\int_{0}^{t}\psi \left( s\right) ds\left( t\geq
0\right) $. Moreover, there exist $c_{1},c_{3}>\frac{1}{2}$ and $c_{2},c_{4}>0$, $%
\Phi $ dominates $\Psi $ globally (in symbols $\Phi \prec \Psi$) and 
\begin{equation}
\left\vert a\left( y,z,\zeta ,\lambda \right) -a\left( y,z,\zeta',\lambda'\right) \right\vert \leq c_{1}\widetilde{\Psi }%
^{-1}\left( \Phi \left( c_{2}\left\vert \zeta -\zeta ^{\prime }\right\vert
\right) \right)+c_{3}\widetilde{\Phi }^{-1}\left( \Phi \left(
c_{4}\left\vert \lambda -\lambda \right\vert \right) \right)  \label{1.3}
\end{equation}%
for a.e. $y\in
\mathbb{R}
^{d}$ and for all $\left( z,\zeta ,\lambda \right) \in \mathbb{R}
^{d}\times 
\mathbb{R}
\times
\mathbb{R}
^{d},$ where $\widetilde{\Phi }\left( t\right) =\int_{0}^{t}\phi ^{-1}\left(
s\right) ds$ and $\widetilde{\Psi }\left( t\right) =\int_{0}^{t}\psi
^{-1}\left( s\right) ds\left( t\geq 0\right) $ are the complementary $N$-functions of $\Phi$ and $\Psi$, respectively, (see Section \ref{notations} for Orlicz-Sobolev spaces and \cite{Cle} for the adopted assumptions in the context of PDEs, among a wide literature on the subject).

\noindent $\left( H_{3}\right) $ There exists a continuous monotone decreasing mapping \ 
$h:\left[ 0,+\infty \right[ \rightarrow \left[ 0,1\right[ ,$ with $\underset{%
t\geq 0}{\min }h\left( t\right) >0$ and unbounded anti-derivative such that
for any $\left( \zeta ,\lambda \right) \in 
\mathbb{R}
\times
\mathbb{R}^d,$
\begin{equation}
a\left( y,z,\zeta ,\lambda \right) \cdot \lambda \geq \widetilde{\Phi }%
^{-1}\left( \Phi \left( h\left( \left\vert \zeta \right\vert \right)
\right) \right) \cdot\Phi \left( \left\vert \lambda \right\vert \right) \text{
a.e. }\left( y,z\right) \text{ in }%
\mathbb{R}^{d}\times 
\mathbb{R}^{d}.  \label{1.4}
\end{equation}%
\noindent $\left( H_{4}\right) $ For all $\zeta  \in 
\mathbb{R}
$ and for all $\lambda, \lambda' \in
\mathbb{R}
^{d},$ 
\begin{align*}\left( a\left( y,z,\zeta ,\lambda \right) -a\left( y,z,\zeta,\lambda'\right) \right) \cdot \left( \lambda -\lambda'\right)  >  0  \hbox{ for a.e. } (y,z)\in
		\mathbb{R}^{d}\times 
		\mathbb{R}^{d}.
\end{align*}
\color{black}

\noindent $\left( H_{5}\right) $ The function $a$ is periodic in the first two variables, and satisfies a local continuity assumption in the first variable, i.e.  
\begin{itemize}
	\item[(i)] $a\left( y+k,z+k',\zeta ,\lambda \right)
=a\left( y,z,\zeta ,\lambda \right) $ for any $\left( k,k'\right)
\in\mathbb{Z}^{d}\times \mathbb Z^d, \left( z,\zeta ,\lambda \right) \in
\mathbb{R}
^{d}\times \mathbb R \times  \mathbb R^d$ and a.e. $y\in 
\mathbb{R}
^{d};$
\item[(ii)]
For each bounded set $\Lambda $ in $
\mathbb{R}
^{d}$ and $\eta >0,$ there exists $\rho >0$ such  
that,  
\begin{align} \hbox{ if }\left\vert \xi \right\vert \leq \rho 
\hbox{ then }\left\vert a\left(
y-\xi ,z,\zeta ,\lambda \right) -a\left( y,z,\zeta ,\lambda \right)
\right\vert \leq \eta, \label{3.84} 
\end{align}
for all 
$\left( z,\zeta ,\lambda \right) \in 
\mathbb{R}^d\times\mathbb{R}\times \mathbb{R}^{d}$ and almost all $y\in 
\Lambda.
$
 \end{itemize}

\color{black}

Indeed, we aim at extending \cite[Theorem 1.3]{All2} and \cite[Theorem 29]{LNW}, to the framework of Sobolev-Orlicz spaces, relying on the ad hoc notion of reiterated two-scale convergence in such spaces, obtained in \cite{FNZOpuscula} (cf. also \cite{fotso nnang 2012} and \cite{FTGNZ}). 
We also refer to \cite{NN, Nnang These, Nnang Orlicz 2014} for homogenization problems for PDEs in the Orlicz setting, to \cite{W2008, W12010, W32010, W22010} for homogenization of non-monotone operators in the Sobolev setting, to \cite{AAP, MMR, MRT} for homogenization problems in the variable exponent setting, among a wider literature and to \cite{nnang reit} for reiterated homogenization in general deterministic setting.

Indeed, under the above assumptions (which, in turn, rephrase into the multiscale periodic setting, the degenerate equation considered in \cite{Y}) and with the notation in section \ref{notations} and subsection \ref{sub32},  our main results read as follows:

\begin{theorem}\label{mainresult}
	Let \eqref{1.1} be the problem defined in Section \ref{sec1}, with $a$ and $f$ satisfying $(H_1)-(H_5)$. For each $\varepsilon
	>0$, let $u_{\varepsilon }$ be a solution of \eqref{1.1}. Then there exists a not relabeled subsequence  and $u:=\left( u_{0},u_{1},u_{2}\right)\in \mathbb{F}_{0}^{1,\Phi }:=W_{0}^{1}L^{\Phi}\left( \Omega
	\right) \times L^\Phi_{per}\left( \Omega ;W_{\#}^{1}L^{\Phi}\left( Y
	\right) \right) \times L^\Phi\left( \Omega ;L_{per}^{\Phi}\left(
	Y;W_{\#}^{1}L^{\Phi}\left( Z
	\right) \right) \right)$ such that
	\begin{equation}
		u_{\varepsilon }\rightharpoonup u_{0}\text{ in }W_{0}^{1}L^{\Phi }\left( \Omega
		\right) -\text{weakly},  \label{3.58}
	\end{equation}%
	\begin{align}
		D_{x_{i}}u_{\varepsilon }\rightharpoonup
		D_{x_{i}}u_{0}+D_{y_{i}}u_{1}+D_{z_{i}}u_{2} \nonumber \\
		\text{ weakly reiteratively two-scale in }L^{\Phi }(\Omega), 1\leq i\leq N, \label{3.59}
	\end{align}
	and $u$ 
	solves the problem
	\begin{equation}
		\left\{ 
		\begin{tabular}{l}
			$\int_{\Omega }\int_{Y}\int_{Z}a\left(
			y, z,u_{0},Du_{0}+D_{y}u_{1}+D_{z}u_{2}\right)\cdot \left(
			Dv_{0}+D_{y}v_{1}+D_{z}v_{2}\right) dxdydz$ \\
			\\ 
			$=\int_{\Omega }fv_{0}dx,$ for all $v=\left( v_{0},v_{1},v_{2}\right) \in 
			\mathbb{F}_{0}^{1,\Phi }$. \\ 
		\end{tabular}%
		\right. . \label{3.60}
	\end{equation}
\end{theorem}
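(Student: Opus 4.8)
The plan is to follow the standard homogenization scheme—a priori estimates, extraction of a reiteratively two-scale convergent subsequence, passage to the limit in the weak formulation, then identification of the limit via a monotonicity argument—but carried out in the Orlicz-Sobolev framework using the reiterated two-scale convergence of \cite{FNZOpuscula}. First I would establish uniform a priori bounds: testing \eqref{1.1} with $u_\varepsilon$, using the coercivity $(H_3)$ (inequality \eqref{1.4}), the growth/Lipschitz bound $(H_2)$ (inequality \eqref{1.3}) together with $a(\cdot,\cdot,0,\omega)\in L^\infty$, the Poincar\'e inequality in $W_0^1L^\Phi(\Omega)$, and the fact that $\Phi,\widetilde\Phi,\Psi,\widetilde\Psi$ satisfy $\Delta_2$ (footnote after \eqref{1.2}), one obtains that $\|u_\varepsilon\|_{W_0^1L^\Phi(\Omega)}$ is bounded and, via the growth condition on $a$, that $\big(a(x/\varepsilon,x/\varepsilon^2,u_\varepsilon,Du_\varepsilon)\big)_\varepsilon$ is bounded in $\big(L^{\widetilde\Phi}(\Omega)\big)^d$. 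By reflexivity/weak compactness of Orlicz spaces under $\Delta_2$ and the compactness theorem for reiterated two-scale convergence in $L^\Phi$, extract a not relabeled subsequence such that \eqref{3.58} and \eqref{3.59} hold for some $u=(u_0,u_1,u_2)\in\mathbb{F}_0^{1,\Phi}$, and such that $a(x/\varepsilon,x/\varepsilon^2,u_\varepsilon,Du_\varepsilon)$ converges weakly reiteratively two-scale to some limit $\mathcal{A}_0\in\big(L^{\widetilde\Phi}(\Omega\times Y\times Z)\big)^d$; by the Rellich-type compactness $u_\varepsilon\to u_0$ strongly in $L^\Phi(\Omega)$ and a.e. (up to a further subsequence).

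Next I would pass to the limit in the weak formulation of \eqref{1.1}. Using as test functions the oscillating family $v_0(x)+\varepsilon v_1(x,x/\varepsilon)+\varepsilon^2 v_2(x,x/\varepsilon,x/\varepsilon^2)$ with $v_0\in C_0^\infty(\Omega)$ and $v_1,v_2$ smooth and $Y$- (resp. $Z$-) periodic in the fast variables, whose gradients reiteratively two-scale converge to $Dv_0+D_yv_1+D_zv_2$, and exploiting the weak$\times$strong structure (weak reiterated two-scale convergence of $a(\cdots)$ against the strongly reiteratively convergent test gradients), one passes to the limit to obtain
\begin{equation*}
\int_\Omega\int_Y\int_Z \mathcal{A}_0\cdot\big(Dv_0+D_yv_1+D_zv_2\big)\,dx\,dy\,dz=\int_\Omega f v_0\,dx
\end{equation*}
for all $(v_0,v_1,v_2)$ in a dense subset of $\mathbb{F}_0^{1,\Phi}$, hence for all such $v$ by density and the $\Delta_2$-based continuity of the pairing. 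Choosing $v_0=0$, $v_2=0$ and then $v_0=v_1=0$ yields the two "cell equations" that, together, encode that $\mathcal{A}_0$ is divergence-free in the appropriate reiterated sense.

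It remains to identify $\mathcal{A}_0$ with $a(y,z,u_0,Du_0+D_yu_1+D_zu_2)$; this is the main obstacle, since $a$ is nonlinear and only monotone in the gradient slot $(H_4)$, not strictly monotone quantitatively, and it also depends on $u_\varepsilon$. I would use a Minty–Browder / div-curl type argument adapted to the reiterated Orlicz setting: set $X_\varepsilon:=a(x/\varepsilon,x/\varepsilon^2,u_\varepsilon,Du_\varepsilon)$, test \eqref{1.1} with $u_\varepsilon$ to compute $\limsup_\varepsilon\int_\Omega X_\varepsilon\cdot Du_\varepsilon\,dx=\int_\Omega f u_0\,dx$, which by the limit problem equals $\int_\Omega\int_Y\int_Z\mathcal{A}_0\cdot(Du_0+D_yu_1+D_zu_2)$. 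Then, for arbitrary smooth $\Psi=(\psi_0,\psi_1,\psi_2)$ and $\Phi$-admissible "recovery" fields $\Xi_\varepsilon\to\Theta:=Du_0+D_yu_1+D_zu_2-\psi_0\cdots$ in the strong reiterated two-scale sense with $\Xi_\varepsilon$ built from the test functions, one writes the nonnegative quantity
\begin{equation*}
0\le\int_\Omega\big(a(x/\varepsilon,x/\varepsilon^2,u_\varepsilon,\Xi_\varepsilon)-X_\varepsilon\big)\cdot\big(\Xi_\varepsilon-Du_\varepsilon\big)\,dx,
\end{equation*}
uses the continuity of $a$ in the first and third variables—here $(H_5)(ii)$ (local continuity in $y$, inequality \eqref{3.84}) and the a.e. convergence $u_\varepsilon\to u_0$ are essential to handle the oscillating argument and the $u_\varepsilon$-dependence via a Scorza-Dragoni / continuity-in-measure passage—and passes to the reiterated two-scale limit to get $0\le\int\int\int\big(a(y,z,u_0,\Theta+\text{shift})-\mathcal{A}_0\big)\cdot(\text{shift})$; finally varying $\Theta$ over a dense set and invoking hemicontinuity forces $\mathcal{A}_0=a(y,z,u_0,Du_0+D_yu_1+D_zu_2)$ a.e. Plugging this back yields \eqref{3.60}. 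I expect the delicate points to be: (a) the correct density of the test-function space and the continuity of the trilinear pairing in the $\Delta_2$-Orlicz topology, and (b) the joint limit passage in $a(x/\varepsilon,x/\varepsilon^2,u_\varepsilon,\cdot)$ combining the oscillation in $y$ (controlled by \eqref{3.84}), the periodicity in $z$, and the strong convergence of $u_\varepsilon$.
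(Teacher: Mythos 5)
Your proposal is correct and follows essentially the same route as the paper: uniform bounds from $(H_3)$, reiterated two-scale compactness (Proposition \ref{mainprop3s}) giving \eqref{3.58}--\eqref{3.59}, oscillating test functions $\psi_0+\varepsilon\psi_1^{\varepsilon}+\varepsilon^2\psi_2^{\varepsilon}$, and the Minty monotonicity argument, with the limit of $a^{\varepsilon}(\cdot,\cdot,u_\varepsilon,D\phi_\varepsilon)$ supplied by Corollary \ref{cor3.4} (which encodes the continuity in $u_\varepsilon$ via the strong $L^{\Phi}$ convergence, rather than an a.e./Scorza--Dragoni passage). The only, essentially cosmetic, difference is that the paper never introduces the weak reiterated two-scale limit $\mathcal A_0$ of the fluxes: it substitutes the equation directly into the monotonicity inequality, so that $\int_\Omega a^{\varepsilon}(\cdot,\cdot,u_\varepsilon,Du_\varepsilon)\cdot\left(Du_\varepsilon-D\phi_\varepsilon\right)dx=\int_\Omega f\left(u_\varepsilon-\phi_\varepsilon\right)dx$ and no separate identification of $\mathcal A_0$ is required.
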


 Furthermore, in order to get uniqueness of the  solutions in \eqref{1.1} and \eqref{3.60},  in the same spirit of \cite[(2.3.40)]{Pankov1997} (see also \cite{P}) one can assume that there exists $c_5>0$ such that
 
 \noindent $\left( H_{6}\right) $ for all $\zeta, \zeta' \in 
 \mathbb{R}
 $ and for all $\lambda, \lambda' \in
 \mathbb{R}
 ^{d},$
 \begin{align*}\left( a\left( y,z,\zeta ,\lambda \right) -a\left( y,z,\zeta',\lambda'\right) \right) \cdot \left( \lambda -\lambda'\right)  > c_5\Phi \left( \left\vert \lambda -\lambda' \right\vert
 	\right) 
 \end{align*} a.e in $\left( y,z\right) $ in $
 \mathbb{R}^{d}\times 
 \mathbb{R}^{d}$, see Remark \ref{remuniq} below.
 
\begin{theorem}\label{maincor} 
For every $\varepsilon >0$, let \eqref{1.1} be  such that $a$ and $f$ satisfy $(H_1)-(H_6)$. 
Let $u_0 \in W_{0}^{1}L^{\Phi }(\Omega)$ be the solution  defined by means of \eqref{3.60}. Then, it is the unique solution of the
macroscopic homogenized problem 
\begin{equation}
	-{\rm div} q\left(u_0, Du_{0}\right) =f\text{ in }\Omega ,u_{0}\in W_{0}^{1}L^{\Phi
	}\left( \Omega \right),  \label{3.72}
\end{equation}
where $q$ is defined as follows.
For $(r,\xi) \in \mathbb R\times \mathbb R^d$
\begin{align}
	\label{q}
	q\left( r,\xi \right) =\int_{Y}h\left(y,r, \xi +D_{y}\pi
	_{1}\left( r,\xi \right) \right) dy.
\end{align}  
where, 
 for a.e. $y \in Y$, and for any $(r, \xi) \in \mathbb R \times \mathbb R^d$,  
\begin{equation}\label{h}h\left(y,r, \xi \right) :=\int_{Z}a_{i}\left( y,z,r ,\xi
	+D_{z}\pi _{2}\left(y,r, \xi \right) \right) dz,\end{equation}

where for a.e.  $y \in Y$, and every $(r,\xi) \in \mathbb R \times \mathbb R^d$,$\pi_2(y, r,\xi)$,  is the solution of the following variational cell problem:
\begin{equation}
	\left\{ 
	\begin{tabular}{l}
		$\hbox{find } \pi _{2}\left(y,r, \xi \right) \in W_{\#}^{1}L^\Phi\left( Z\right) $ \hbox{such that} \\ 
		$\int_{Z}a\left( y,z,r,\xi +D_{z}\pi _{2}\left( y,r,\xi \right) \right)
		\cdot D_{z}\theta dz=0$ for all $\theta \in W_{\#}^{1}L^\Phi\left( Z\right) $%
	\end{tabular}%
	\right.  \label{3.67}
\end{equation}
 and $\pi _{1}\left(r, \xi \right) \in
W_{\#}^{1}L^\Phi\left( Y
\right) $ is the unique solution of the variational problem 
\begin{equation}
	\left\{ 
	\begin{tabular}{l}
		\hbox{ find} $\pi _{1}\left(r, \xi \right) \in W_{\#}^{1}L^{\Phi}\left( Y
		\right) $ \hbox{ such that } \\ 
		$\int_{Y}h\left( r, \xi +D_{y}\pi _{1}\left(r, \xi \right) \right) \cdot D_{y}\theta
		dy=0$ for all $\theta \in W_{\#}^{1}L^{\Phi}\left( Y
		\right). $%
	\end{tabular}%
	\right.  \label{3.69b}
\end{equation}

\end{theorem}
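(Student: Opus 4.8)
The plan is to start from the limit system \eqref{3.60} obtained in Theorem \ref{mainresult} and to eliminate the corrector variables $u_1, u_2$ successively, exploiting the separable (reiterated) structure of the admissible test functions $v = (v_0, v_1, v_2) \in \mathbb F_0^{1,\Phi}$. First I would fix $v_0 = 0$, $v_1 = 0$ and let $v_2$ range over $L^\Phi(\Omega; L^\Phi_{per}(Y; W^1_\# L^\Phi(Z)))$; by a density/localization argument (choosing $v_2(x,y,z) = \varphi(x,y)\theta(z)$ with $\varphi \ge 0$ and using that $\varphi$ is arbitrary) one deduces that for a.e.\ $(x,y)$ the slice $u_2(x,y,\cdot)$ satisfies the innermost cell problem \eqref{3.67} with data $r = u_0(x)$ and $\xi = Du_0(x) + D_y u_1(x,y)$. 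Here one must check that \eqref{3.67} is well posed: existence follows from the classical theory of monotone operators on the reflexive Orlicz–Sobolev space $W^1_\# L^\Phi(Z)$ — $(H_3)$ and $(H_2)$ give coercivity and the growth bound $|a| \le$ (const)$\,\widetilde\Phi^{-1}(\Phi(\cdot))$, $(H_1)$ gives the Carathéodory/continuity hypotheses, and $(H_2)$–$(H_3)$ give hemicontinuity and monotonicity — while uniqueness in $W^1_\# L^\Phi(Z)$ (up to additive constants, which the $\#$-quotient kills) is exactly what $(H_6)$ secures, since it forces strict monotonicity with a $\Phi$-gap in the gradient variable. This identifies $D_z u_2(x,y,z) = D_z\pi_2(y, u_0(x), Du_0(x)+D_yu_1(x,y))$ and hence, integrating $a$ in $z$, produces the function $h$ of \eqref{h}.

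Next I would repeat the elimination one level up. Take $v_0 = 0$, $v_2 = 0$ and $v_1(x,y) = \varphi(x)\theta(y)$ arbitrary; substituting the already-identified $u_2$ into \eqref{3.60} and using the definition of $h$, the integral over $Z$ collapses and one is left with $\int_\Omega \varphi(x)\int_Y h(y,u_0,Du_0+D_yu_1)\cdot D_y\theta\,dy\,dx = 0$, whence for a.e.\ $x$ the slice $u_1(x,\cdot)$ solves \eqref{3.69b} with $r = u_0(x)$, $\xi = Du_0(x)$. Again one needs well-posedness of \eqref{3.69b} on $W^1_\# L^\Phi(Y)$: the effective flux $\xi \mapsto \int_Y h(y,r,\xi + \cdot)\,dy$ inherits monotonicity from that of $a$ (monotonicity is preserved under composition with the solution map of \eqref{3.67} and under integration in $z$, by a standard argument testing \eqref{3.67} for two data against the difference of the two correctors), it inherits the $\Phi$-growth bound, and under $(H_6)$ it inherits strict monotonicity with a $\Phi$-gap; coercivity comes from $(H_3)$. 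This identifies $D_y u_1(x,y) = D_y\pi_1(u_0(x), Du_0(x))$ and, integrating $h$ in $y$, produces $q$ as in \eqref{q}. Along the way I would record that $q$ is a Carathéodory function of $(r,\xi)$ satisfying $|q(r,\xi)| \le C\,\widetilde\Phi^{-1}(\Phi(c|\xi| + c'))$, $q(r,\xi)\cdot\xi \ge \widetilde\Phi^{-1}(\Phi(h(|r|)))\Phi(|\xi|) - C$, and, crucially, $(q(r,\xi)-q(r,\xi'))\cdot(\xi-\xi') \ge c_5'\,\Phi(|\xi-\xi'|)$ — the descent of $(H_6)$ through both cell problems.

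Finally, inserting the identifications $D_yu_1 = D_y\pi_1(u_0,Du_0)$ and $D_zu_2 = D_z\pi_2(y,u_0,Du_0+D_y\pi_1)$ into \eqref{3.60} with general $v = (v_0,0,0)$, all inner integrals collapse by the very definitions of $h$ and $q$, leaving exactly $\int_\Omega q(u_0,Du_0)\cdot Dv_0\,dx = \int_\Omega f v_0\,dx$ for all $v_0 \in W^1_0 L^\Phi(\Omega)$, which is the weak form of \eqref{3.72}. Existence of a solution to \eqref{3.72} is then automatic (it is $u_0$); for uniqueness, suppose $u_0, \bar u_0$ both solve it, subtract the weak formulations, test with $v_0 = u_0 - \bar u_0$, and split $q(u_0,Du_0) - q(\bar u_0, D\bar u_0) = [q(u_0,Du_0) - q(u_0, D\bar u_0)] + [q(u_0,D\bar u_0) - q(\bar u_0,D\bar u_0)]$; the first bracket dotted with $D(u_0-\bar u_0)$ is $\ge c_5'\Phi(|D(u_0-\bar u_0)|) \ge 0$ by the inherited $(H_6)$, while the second bracket is controlled via the $\widetilde\Psi^{-1}$-term of \eqref{1.3} together with the embedding $W^1_0L^\Phi(\Omega) \hookrightarrow L^\Psi(\Omega)$ coming from $\Phi \prec \Psi$ and a Poincaré/Sobolev–Orlicz inequality, so it is absorbed (using that $f$ forces no zeroth-order term here, one gets $\int_\Omega \Phi(|D(u_0-\bar u_0)|)\,dx \le 0$, hence $D(u_0-\bar u_0) = 0$ a.e., hence $u_0 = \bar u_0$ by the boundary condition). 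The main obstacle I anticipate is precisely the propagation of the strict-monotonicity estimate of $(H_6)$ through the two nested cell problems — showing that the $z$-averaged, then $y$-averaged operator still enjoys a quantitative $\Phi$-gap — and, entangled with it, the handling of the $u_0$-dependence (the variable $r$) in $q$: one must verify that $q$ is continuous in $r$ (via $(H_1)$, $(H_2)$ and stability of the cell-problem solutions $\pi_1,\pi_2$ under perturbation of $r$, using \eqref{1.3}) so that the second bracket in the uniqueness argument is genuinely a lower-order perturbation and can be absorbed by the left-hand side.
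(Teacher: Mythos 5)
Your reduction of \eqref{3.60} to the nested cell problems, the identification $u_1=\pi_1(u_0,Du_0)$ and $D_zu_2=D_z\pi_2(\cdot,u_0,Du_0+D_y\pi_1)$, and the derivation of the macroscopic weak formulation \eqref{3.71} all follow the paper's route. The divergence --- and the gap --- is in the uniqueness argument. You propose to establish a $\Phi$-gap strict monotonicity of $q(r,\cdot)$ for \emph{fixed} $r$ and then to treat the $r$-dependence as a perturbation, splitting $q(u_0,Du_0)-q(\bar u_0,D\bar u_0)$ into a monotone bracket plus a bracket $q(u_0,D\bar u_0)-q(\bar u_0,D\bar u_0)$ that you claim can be ``absorbed'' using \eqref{1.3}. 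This absorption does not work as stated: the cross term $\int_\Omega\left[q(u_0,D\bar u_0)-q(\bar u_0,D\bar u_0)\right]\cdot D(u_0-\bar u_0)\,dx$ has no sign and carries no small parameter, so there is no mechanism to dominate it by $c_5'\int_\Omega\Phi(|D(u_0-\bar u_0)|)\,dx$ (this is the classical obstruction to uniqueness for quasilinear problems with $u$-dependent coefficients). Moreover, you never actually prove the descent of the $\Phi$-gap through the two cell problems; you only ``record'' it and yourself flag it as the anticipated obstacle.

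Both difficulties disappear if you use $(H_6)$ in its full strength: it is assumed for \emph{all} pairs $\zeta,\zeta'$, not only $\zeta=\zeta'$, so no splitting in the $r$-variable is needed. The paper's proof exploits this by unfolding: subtracting the four weak formulations gives $\int_\Omega(q(u_0,Du_0)-q(w_0,Dw_0))\cdot(Du_0-Dw_0)\,dx=0$; substituting the definitions of $q$ and $h$ and using the cell equations \eqref{3.67} and \eqref{3.69b}, tested with the differences of the respective correctors, to replace the vector $Du_0-Dw_0$ by the full gradients $P-F$, this integral equals $\iiint(a(y,z,u_0,P)-a(y,z,w_0,F))\cdot(P-F)\,dx\,dy\,dz$, to which $(H_6)$ applies directly with $\zeta=u_0(x)$, $\zeta'=w_0(x)$. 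Hence $\Phi(|P-F|)=0$ a.e., i.e. $P=F$, and averaging over $Y\times Z$ (the corrector gradients have zero cell means) yields $Du_0=Dw_0$ and thus $u_0=w_0$. If you prefer your operator-level formulation, the same cell-equation testing combined with Jensen's inequality (the cells have unit measure and $\Phi$ is convex) gives $(q(r,\xi)-q(r',\xi'))\cdot(\xi-\xi')\ge c_5\,\Phi(|\xi-\xi'|)$ for all $(r,\xi),(r',\xi')$, after which uniqueness is immediate and no absorption or continuity-in-$r$ estimate for $q$ is required.
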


\color{black}


The paper is organized as follows: section 2 deals with some preliminaries on Orlicz-Sobolev spaces, reiterated two-scale convergence, compactness results in the considered functions spaces, and other preliminaries while section 3 focuses on the detection of the asymptotic behaviour  of solutions of problems with highly oscillating coefficients, in particular \eqref{1.1} while section 4 contains the proof of our main result. Finally in the Appendix, for the reader's convenience, we justify the well-posedness of \eqref{1.1} under our set of assumptions.  

\section{Notation and preliminary results}\label{notations}

In what follows $X$ and
$V$ denote a locally compact space and a Banach space, respectively, and
$\mathcal C(X; V)$ stands for the space of continuous functions from $X$ into $V$, and
$\mathcal C_b(X; V)$ stands for those functions in $\mathcal C(X; V)$ that are bounded. The space $\mathcal C_b(X; V)$ is enodowed with the supremum norm $\|u\|_{\infty} = \sup_{x\in X}
\|u(x)\|$, where
$\|\cdot\|$ denotes the norm in $V$, (in particular, given an open set $A\subset \mathbb R^d$ by $\mathcal C_b(A)$ we denote the space of real valued continuous and bounded functions defined in $A$). Likewise the spaces $L^p(X; V)$ and $L^p_{\rm loc}(X; V)$
($X$ provided with a positive Radon measure) are denoted by $L^p(X)$ and
$L^p_{\rm loc}(X)$, respectively, when $V = \mathbb R$ (we refer to \cite{FLbook} for integration theory).

In the sequel we denote by $Y$ and $Z$ two identical copies of the cube $]-1/2,1/2[^d$.  

In order to enlighten the space variable under consideration we will adopt the notation $\mathbb R^d_x, \mathbb R^d_y$, or $\mathbb R^d_z$ to indicate where $x,y $ or $z$ belong to.

The family of open subsets in $\mathbb R^d_x$ will be denoted by $\mathcal A(\mathbb R^d_x)$.

For any subset $E$ of $\mathbb R^m$, $m \in \mathbb N$, by $\overline E$, we denote its closure in the relative topology.

For every $x \in \mathbb R^d$ we denote by $[x]$ its integer part, namely the vector in $\mathbb Z^d$, which has as components the integer parts of the components of $x$.

By $\mathcal L^d$ we denote the Lebesgue measure in $\mathbb R^d$.

\subsection{Orlicz-Sobolev spaces}\label{O-Sspaces}

\bigskip Let $B:\left[ 0,+\infty \right[ \rightarrow \left[ 0,+\infty \right[
$ be an ${\rm N}-$function (see \cite{ada}), i.e., $B$ is continuous, convex, with $%
B\left( t\right) >0$ for $t>0,\frac{B\left( t\right) }{t}\rightarrow 0$ as $t\rightarrow 0,$ and $\frac{B\left( t\right) }{t}\rightarrow \infty $ as $%
t\rightarrow \infty .$
Equivalently, $B$ is of the form $B\left( t\right)
=\int_{0}^{t}b\left( \tau \right) d\tau ,$ where $b:\left[ 0,+\infty \right[
\rightarrow \left[ 0,+\infty \right[ $ is non decreasing, right continuous,
with $b\left( 0\right) =0,b\left( t\right) >0$ if $t>0$ and $b\left(
t\right) \rightarrow +\infty $ if $t\rightarrow +\infty .$ 
                                                                                                                                                    
We denote by $\widetilde{B},$ the complementary ${\rm N}-$function of $B$ defined by $$\widetilde{B}(t)=\sup_{s\geq 0}\left\{ st-B\left( s\right) \right\}, \,t\geq 0.
$$ It follows
that 
\begin{equation}\nonumber 
\frac{tb(t)}{B(t)} \geq 1
\;\;(\hbox{or }> \hbox{if }b\hbox{ is strictly increasing}),
\end{equation}
\begin{equation}
\nonumber \widetilde{B}( b(t) )\leq
tb( t) \leq B( 2t) \hbox{ for all }t>0.
\end{equation}
An ${\rm N}-$function $B$ is of class $\triangle _{2}$ near $\infty$ (denoted $B\in \triangle
_{2}$) if there are $\alpha >0$ and $t_{0}\geq 0$ such that 
\begin{equation}\label{Delta2}B\left(
2t\right) \leq \alpha B\left( t\right) 
\end{equation} for all $t\geq t_{0}$.

\noindent An  $N$- function $B$ is of class $\Delta'$ if there exists $C>0$ such that
$B(ts) \leq C B(t)B(s)$, for every $s,t\geq 0$.
\color{black}
\noindent In what
follows every $N$- function $B$  and its conjugate $\widetilde{B}$ 
satisfy the $\triangle_2$ condition and $c$ refers to a constant.

It is also worth recalling that given two $N$-functions
$B$ and $C$, $B$ dominates $C$ (denoted as $C \prec B$) if there is $k>0$ such that $C(t) \leq B(k t)$ for all $t \geq 0$. Hence, it follows that $C \prec B$ if and only if 
${\widetilde B} \prec {\widetilde C}$.

Let $\Omega $ be a
bounded open set in $\mathbb R^d$. The Orlicz space
\begin{equation*}L^{B}\left(
\Omega \right) =\left\{ u:\Omega \rightarrow 
\mathbb R \hbox{ measurable},\lim_{\delta \to 0^+} \int_{\Omega
}B\left( \delta \left\vert u\left( x\right) \right\vert \right) dx=0\right\} 
\end{equation*}
is a Banach space with respect to the Luxemburg norm: \begin{equation*}\left\Vert u\right\Vert
_{B,\Omega }:=\inf \left\{ k>0:\int_{\Omega }B\left( \frac{\left\vert u\left(
	x\right) \right\vert }{k}\right) dx\leq 1\right\} <+\infty .\end{equation*}It follows
that: $\mathcal{D}(\Omega)$ is dense in $L^{B}\left(\Omega
\right)$, $L^{B}\left(\Omega \right)$ is separable and reflexive, the dual
of $L^{B}\left( \Omega \right) $ is identified with $L^{\widetilde{B}}\left(
\Omega \right),$ and the norm on $L^{\widetilde{B}}\left( \Omega \right) $
is equivalent to $\left\Vert \cdot\right\Vert _{\widetilde{B},\Omega }.$
We will denote the norm of elements in $L^{B}\left( \Omega \right)$, both by $\|\cdot\|_{L^{B}\left( \Omega \right)}$ and with $\|\cdot\|_{B, \Omega}$, the latter symbol being useful when we want to emphasize the domain $\Omega$.

Furthermore, it is also convenient to recall that:
\begin{itemize} 
	\item[(i)] $
	\left\vert \int_{\Omega }u\left( x\right) v\left( x\right) dx\right\vert
	\leq 2\left\Vert u\right\Vert _{B,\Omega }\left\Vert v\right\Vert _{%
		\widetilde{B},\Omega }$ for $u\in L^{B}\left( \Omega \right) $ and $v\in L^{%
		\widetilde{B}}\left( \Omega \right) $, 
	\item[(ii)] given $v\in L^{%
		\widetilde{B}}\left( \Omega \right)$, the linear functional $L_{v}$ on $
	L^{B}\left( \Omega \right) $ defined by $L_{v}\left( u\right)$ $:=\int_{\Omega
	}u\left( x\right) v\left( x\right) dx,$ $\left( u\in L^{B}\left( \Omega \right)
	\right) $ belongs to the dual $\left[ L^{B}\left( \Omega \right) \right]
	^{\prime }=L^{\widetilde{B}}\left( \Omega \right) $ with $\left\Vert
	v\right\Vert _{\widetilde{B},\Omega }\leq \left\Vert L_{v}\right\Vert _{\left[ L^{B}\left( \Omega \right) \right] ^{\prime }}\leq 2\left\Vert
	v\right\Vert _{\widetilde{B},\Omega }$, 
	\item[(iii)]  the property $\lim_{t \to +\infty} \frac{B\left( t\right) }{t}=+\infty $
	implies $L^{B}\left( \Omega \right) \subset L^{1}\left( \Omega \right)
	\subset L_{loc}^{1}\left( \Omega \right) \subset \mathcal{D}^{\prime }\left(
	\Omega \right),$ each embedding being continuous.
\end{itemize}

Given any $d\in \mathbb N$, when $u:\Omega \to \mathbb R^d$, such that each component $u^i$, of $u$, lies in $L^B(\Omega)$,  
we will denote the norm of $u$ with the symbol $\|u\|_{L^B(\Omega)^{d}}:=\sum_{i=1}^d \|u^i\|_{B,\Omega}$.

Analogously one can define the Orlicz-Sobolev function space as follows: 

\noindent$%
W^{1}L^{B}\left( \Omega \right) =\left\{ u\in L^{B}\left( \Omega \right) :%
\frac{\partial u}{\partial x_{i}}\in L^{B}\left( \Omega \right),1\leq i\leq
d\right\},$ where derivatives are taken in the distributional sense on $%
\Omega.$ Endowed with the norm $\left\Vert u\right\Vert _{W^{1}L^{B}\left(
	\Omega \right) }=\left\Vert u\right\Vert _{B,\Omega }+\sum_{i=1}^{d}$ $%
\left\Vert \frac{\partial u}{\partial x_{i}}\right\Vert _{B,\Omega },u\in
W^{1}L^{B}\left( \Omega \right) ,$\ \ $W^{1}L^{B}\left( \Omega \right) $ is
a reflexive Banach space. We denote by $W_{0}^{1}L^{B}\left( \Omega \right)
, $ the closure of $\ \mathcal{D}\left( \Omega \right) $\ in $%
W^{1}L^{B}\left( \Omega \right) $ and the semi-norm $u\rightarrow \left\Vert
u\right\Vert _{W_{0}^{1}L^{B}\left( \Omega \right) }=\left\Vert
Du\right\Vert _{B,\Omega }=\sum_{i=1}^{d}$ $\left\Vert \frac{\partial u}{%
	\partial x_{i}}\right\Vert _{B,\Omega }$ is a norm on $W_{0}^{1}L^{B}\left(
\Omega \right) $ equivalent to $\left\Vert \cdot \right\Vert _{W^{1}L^{B}\left(
	\Omega \right) }.$

By $W_{\#}^{1}L^{B}\left( Y\right)$, we denote the space of functions $u \in W^1L^B(Y)$ such that $\int_Y u(y)d y=0$.  It is endowed with the gradient norm.

Given a function space $S$ defined in $Y$, $Z$ or $Y\times Z$, the subscript $_{per}$ stands for periodic, i.e. $S_{per}$ means that its elements are periodic in $Y$, $Z$ or $Y\times Z$, as it will be clear from the context. In particular $\mathcal C_{per}(Y\times Z)$ denotes the space of periodic functions in $\mathcal C(\mathbb R^d_y\times \mathbb R^d_z)$, i.e. that verify $w(y + k, z + h) = w(y, z)$ for $(y, z) \in \mathbb R^d \times \mathbb R^d$
and $(k, h) \in \mathbb Z^d \times \mathbb Z^d$. $\mathcal C^\infty_{per}(Y\times Z)=\mathcal C_{per}(Y\times Z)\cap \mathcal C^\infty(\mathbb R^d_y\times \mathbb R^d_z)$, and $L^B
_{per}(Y \times Z)$ is the space of
$Y \times Z$ -periodic functions in $L^B_{loc}(\mathbb R^d_y
\times \mathbb R^d_z)$.
In our subsequent analysis we denote by $L^B(\Omega; L^B_{per}(Y))$ and $L^B(\Omega;L^B_{per}(Y \times Z))$ the spaces of functions in $L^B_{\rm loc}(\Omega \times Y)$ and $L^B_{\rm loc}(\Omega \times \mathbb R^d_y \times \mathbb R^d_z)$, respectively which are $Y$ and  $Y\times Z$ periodic
for a.e. $x \in \Omega$, respectively and whose Luxemburg  norm is finite in $\Omega \times K$, with $K$ being any compact set in  $Y$ and $Y \times Z$, respectively.

 In formulas
\begin{align}\nonumber
	L^{B}\left( \Omega; L^B_{per}( Y) \right) :=\Big\{ u\in
	L_{loc}^{B}\left( \Omega \times 
	\mathbb{R}
	_{y}^d\right) : u\left( x,\cdot\right) \in
	L_{per}^{B}\left( Y\right)  \\ 
	\left. \hbox{for a.e. }x\in \Omega,\text{ and }\iint_{\Omega \times Y}B\left( \left\vert u\left(
	x,y\right) \right\vert \right) dxdy<\infty \right\},\label{LBperY}
\end{align}
 \begin{align}\nonumber
	L^{B}\left( \Omega;L^B_{per} (Y\times Z)\right) :=\Big\{ u\in
	L_{loc}^{B}\left( \Omega \times 
	\mathbb{R}
	_{y}^d\times \mathbb{R}
	_{z}^d\right) : u\left( x,\cdot,\cdot\right) \in
	L_{per}^{B}\left( Y\times Z\right)  \\ 
	\left. \hbox{for a.e. }x\in \Omega,\text{ and }\iiint_{\Omega \times Y\times Z}B\left( \left\vert u\left(
	x,y,z\right) \right\vert \right) dxdydz<\infty \right\},\label{LBper}
\end{align}
respectively.  We observe that, in view of \cite[Lemma 2.4]{fotso nnang 2012}, if $\tilde B$ satisfies $\Delta'$ condition, then the above spaces coincide with the standard Orlicz-Bochner spaces.
\color{black} These spaces play an important role in the definition of reiterated two-scale convergence in the Orlicz setting.

\subsection{Traces results}\label{traces}
This subsection is devoted to recall some results which are crucial for reiterated multiple scales convergence in the Orlicz setting. 

While the definitions are natural for regular functions, several function spaces and related norms can be introduced to extend the concept of compositions to the multiscale, periodic setting.
The notation is very similar to \cite[Sections 2 and 4]{nnang reit}) and \cite[Section 2 and Appendix]{FNZOpuscula}, where also proofs dealing with the standard Sobolev setting can be found. 

Traces of the form $u^\varepsilon(x):=u\left( x,\frac{x}{\varepsilon 
},\frac{x}{\varepsilon ^{2}}\right) ,x\in \Omega,$ $\varepsilon >0$, when $u\in \mathcal{C}\left( \Omega \times 
\mathbb R_y^d\times
\mathbb R_z^d\right) $ are well known and, clearly the operator
of order $\varepsilon
>0,(t^{\varepsilon })$, defined by 
\begin{equation}
\label{traceoperator}t^{\varepsilon }:u \in \mathcal{C}\left( \Omega
\times 
\mathbb R
_{y}^d\times 
\mathbb R_z^d\right) \longrightarrow u^\varepsilon \in \mathcal{C}\left( \Omega \right),
\end{equation}
is linear and continuous.


Making use of the subscript $_b$ to denote bounded functions, the same definitions and properties hold true (since $\overline \Omega$ is compact), when
$u\in \mathcal{C}\left( \overline{\Omega };\mathcal C_b\left( 
\mathbb R_{y}^d\times 
\mathbb R_z^d\right) \right) \subset \mathcal{C}\left( \overline{\Omega };%
\mathcal{C}\left(
\mathbb R_y^d\times 
\mathbb R_z^d\right) \right) \widetilde{=}\mathcal{C}\left( \overline{\Omega }%
\times 
\mathbb R_y^d\times
\mathbb R_z^d\right) .$

\noindent Then, considering $\mathcal{C}\left( \overline{\Omega }%
;\mathcal C_b\left( 
\mathbb R_y^d\times 
\mathbb R_z^d\right) \right) $ as a subspace of $\mathcal{C}\left( \overline{%
	\Omega }\times 
\mathbb R_y^d\times \mathbb R_z^d\right) $,
$u^{\varepsilon }\in \mathcal C_b\left( \Omega \right) $ and, with an abuse of notation the operator
$t^\varepsilon$ can be interpreted from $\mathcal{C}\left( \overline{\Omega };%
\mathcal C_b\left(
\mathbb{R}_{y}^d\times 
\mathbb{R}
_{z}^d\right) \right) $ to $\mathcal C_b\left( \Omega \right)$
as linear and continuous. Moreover, it is easily seen that
\begin{equation}\label{estuepsi}
 \left\vert u^{\varepsilon }\left( x\right)
\right\vert =\left\vert u\left( x,\frac{x}{\varepsilon },\frac{x}{%
	\varepsilon ^{2}}\right) \right\vert \leq \left\Vert u(x)
\right\Vert_{\infty }
\end{equation}for every $x \in \Omega$.
By $u\in L^{B}(\Omega ;\mathcal C_b\left( \mathbb R _y^d\times 
\mathbb R_z^d\right)) $ we mean that the function $x\rightarrow \left\Vert
u\left( x\right) \right\Vert _{\infty },$ from $\Omega $ into $
\mathbb R 
$, belongs to $L^{B}\left( \Omega \right) $ and
\begin{equation*}
\left\Vert u\right\Vert _{L^{B}\left( \Omega ;\mathcal C_b\left(\mathbb R
	_{y}^d\times 
	\mathbb R_{z}^d\right) \right) }=\inf \left\{ k>0:\int_{\Omega }B\left( \frac{
	\left\Vert u\left( x\right) \right\Vert _{\infty }}{k}\right) dx\leq
1\right\} <+\infty .
\end{equation*}

 Recalling that ${\rm N}-$functions are non decreasing, from \eqref{estuepsi}, we deduce that: 
\begin{align*}
B\left( \frac{\left\vert u^{\varepsilon }\left( x\right) \right\vert }{k}%
\right) \leq B\left( \frac{\left\Vert u\left( x\right) \right\Vert _{\infty }%
}{k}\right) ,\hbox{ for all } k>0,\hbox{ for all } x\in \overline{\Omega },
\\
\int_{\Omega }B\left( \frac{\left\vert u^{\varepsilon }\left(
	x\right) \right\vert }{k}\right) dx\leq \int_{\Omega }B\left( \frac{%
	\left\Vert u\left( x\right) \right\Vert _{\infty }}{k}\right) dx,
\\
\hbox{ thus }
\int_{\Omega }B\left( \frac{\left\Vert u\left( x\right) \right\Vert _{\infty
}}{k}\right) dx\leq 1\Longrightarrow \int_{\Omega }B\left( \frac{\left\vert
	u^{\varepsilon }\left( x\right) \right\vert }{k}\right) dx\leq 1,
\end{align*}
hence
\begin{align}\label{tracebounds}
	\left\Vert u^{\varepsilon }\right\Vert _{L^{B}\left( \Omega \right) }\leq
\left\Vert u\right\Vert _{L^{B}\left( \Omega ;\mathcal C_b\left( 
	\mathbb{R}
	_{y}^d\times 
	\mathbb{R}
	_{z}^d\right) \right) }.\end{align}
Thus the trace operator $t^\varepsilon: u\rightarrow
u^{\varepsilon }$ from $\mathcal{C}\left( \overline{\Omega }; \mathcal C_b\left( \mathbb R _y^d\times 
\mathbb R_z^d\right)\right) $ into $L^{B}\left( \Omega \right) ,$ extends by
density and continuity to a unique operator from $L^{B}( \Omega ;\mathcal C_b\left( \mathbb R _y^d\times 
\mathbb R_z^d\right)) $, still denoted in the same way, which
verifies 
\eqref{tracebounds}
for all $u\in L^{B}\left( \Omega ;\mathcal C_b\left( \mathbb R _y^d\times 
\mathbb R_z^d\right) \right)$.
\noindent 
Referring to \cite[Section 2]{nnang reit} and to \cite[Section 2]{FNZOpuscula}, it can be ensured 
measurability for the trace of elements $u\in L^{\infty }\left( 
\mathbb{R}
_{y}^d;\mathcal C_b\left( 
\mathbb{R}
_{z}^d\right) \right) $ and $u\in \mathcal{C}\left( \overline{\Omega }%
;L^{\infty }\left( 
\mathbb{R}
_{y}^d;\mathcal C_b\left( 
\mathbb{R}
_{z}^d\right) \right) \right) $, which is of crucial importance to deal with reiterated to-scale convergence.

By $M:\mathcal C_{per}\left( Y\times Z\right) \rightarrow 
\mathbb R$ we denote the mean value operator (or equivalently `averaging operator') defined by
\begin{equation}
\label{M}
u\rightarrow M(u):=\iint_{Y\times Z}u\left( x,y\right) dxdy.
\end{equation}
It is easily seen that it is:
\begin{itemize}
	\item[(i)] nonnegative, i.e. $M\left( u\right) \geq 0\
	\hbox{ for all } u\in \mathcal{C}_{per}(Y\times Z) ,u\geq 0;$
	\item[(ii)] continuous on $\mathcal{C%
	}_{per}\left( Y\times Z\right) $ (for the sup norm); 
	\item[(iii)] such that
	$M\left( 1\right) =1$;
	\item[(iv)]  translation invariant.
\end{itemize}


\medskip
Following \cite{nnang reit}, for the given ${\rm N}-$function $B$, by $\Xi ^{B}\left( \mathbb{R}
_{y}^d;\mathcal C_b\left( 
\mathbb{R}
_{z}^d\right) \right) $ 
we denote the space \begin{align}\label{Xi}
\Xi ^{B}\left( 
\mathbb{R}_{y}^d;\mathcal C_b(\mathbb R^d_z)\right):=
\Big\{ u \in L_{loc}^{B}\left( 
\mathbb{R}
_{x}^d;\mathcal C_b\left( 
\mathbb{R}
_{z}^d\right) \right): \hbox{for every } U \in {\mathcal A}(\mathbb R^d_x): \nonumber\\
\underset{0<\varepsilon \leq 1}{\sup }\inf \Big\{
k>0,\int_{U}B\left( \tfrac{\left\Vert u\left( \frac{x}{\varepsilon },\cdot\right)
	\right\Vert _{L^{\infty }}}{k}\right) dx\leq 1\Big\} <\infty \Big\},
\end{align}
which, endowed with the norm
\begin{align}\label{ornLBPer}
&\left\Vert u\right\Vert _{\Xi ^{B}\left( 
	\mathbb{R}
	_{y}^d;\mathcal C_b\left( 
	\mathbb{R}
	_{z}^d\right) \right) }:= \\
\nonumber &\underset{0<\varepsilon \leq 1}{\sup}\inf \Big\{
k>0,\int_{B_d(\omega,1)}B\Big( \tfrac{\left\Vert u\left( \frac{x}{\varepsilon },\cdot\right) \right\Vert _{L^{\infty }}}{k}\Big) dx\leq 1\Big\} ,
\end{align}%
turns out to be a Banach space ($B_d(\omega,1)$, above being the unit ball of $
\mathbb{R}_x^d$ centered at the origin).

\noindent We denote by $\mathfrak{X}_{per}
^{B}\left( 
\mathbb R _{y}^d;\mathcal C_b(\mathbb R^d_z)\right) $ the closure of $\mathcal{C}_{per}\left(
Y\times Z\right) $\ in \ $\Xi ^{B}\left(
\mathbb R
_{y}^d;\mathcal C_b(\mathbb R^d_z)\right)$. and with the above notation, by $L_{per}^{B}\left( Y\times Z\right) $ the space of functions in $L^B_{\rm loc}(\mathbb R_y^d\times \mathbb R^d_z)$ which are $Y\times Z$-periodic, with norm
 $\left\Vert \cdot\right\Vert _{B,Y\times Z}$, 
(i.e. one considers the $L^B$ norm just on  the unit period).
Furthermore it is immediately seen that 
\begin{equation*}
\left\vert \int_{B_d(\omega,1)}u\left( \frac{x}{\varepsilon },\frac{x}{\varepsilon
	^{2}}\right) dx\right\vert \leq \int_{B_d(\omega,1)}\left\Vert u\left( \frac{x}{%
	\varepsilon },\cdot\right) \right\Vert _{\infty }dx\leq 2\left\Vert 1\right\Vert
_{\widetilde{B},B_d(\omega,1)}\left\Vert u\right\Vert _{\Xi^{B}\left(
	\mathbb R_y^d;\mathcal C_b\left( 
	\mathbb R_z^d\right) \right) },
\end{equation*}
for every $u\in \mathcal{C}_{per}\left( Y\times Z\right) $ and for every $0<\varepsilon \leq 1$.

The following results, useful to prove estimates which involve test functions on oscillating arguments (see for instance Proposition \ref{propcomp}), can be found in \cite[Section 2]{FNZOpuscula}.

\begin{lemma}\label{lemma2.2}
	There exists $C\in \mathbb R^+$ such that $\left\Vert u^{\varepsilon }\right\Vert
	_{B,B_d(\omega,1)}\leq C \left\Vert u\right\Vert
	_{B,Y\times Z },$ for every $0<\varepsilon \leq 1$, and $u\in \mathfrak{X}_{per}^{B}\left( 
	\mathbb R
	_{y}^d;\mathcal C_b(\mathbb R^d_z)\right).$ 
\end{lemma}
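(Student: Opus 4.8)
The plan is to reduce the estimate $\|u^\varepsilon\|_{B,B_d(\omega,1)}\le C\|u\|_{B,Y\times Z}$ to the case of a smooth periodic function and then pass to the limit by density, using the continuity properties of $t^\varepsilon$ recorded above together with the fact that $\mathfrak X^B_{per}$ is by definition the closure of $\mathcal C_{per}(Y\times Z)$ in $\Xi^B$. First I would fix $u\in\mathcal C_{per}(Y\times Z)$ and $0<\varepsilon\le 1$, and control the Luxemburg norm of the trace $u^\varepsilon$ over the fixed ball $B_d(\omega,1)$ by $\|u\|_{\Xi^B(\mathbb R^d_y;\mathcal C_b(\mathbb R^d_z))}$: indeed, from $|u^\varepsilon(x)|=|u(\frac x\varepsilon,\frac x{\varepsilon^2})|\le\|u(\frac x\varepsilon,\cdot)\|_{L^\infty}$ and the monotonicity of the N-function $B$ one gets, for every $k>0$,
\begin{equation*}
\int_{B_d(\omega,1)}B\!\left(\tfrac{|u^\varepsilon(x)|}{k}\right)dx\le \int_{B_d(\omega,1)}B\!\left(\tfrac{\|u(\frac x\varepsilon,\cdot)\|_{L^\infty}}{k}\right)dx,
\end{equation*}
so any $k$ admissible in the definition \eqref{ornLBPer} of $\|u\|_{\Xi^B}$ is admissible for $\|u^\varepsilon\|_{B,B_d(\omega,1)}$, whence $\|u^\varepsilon\|_{B,B_d(\omega,1)}\le\|u\|_{\Xi^B(\mathbb R^d_y;\mathcal C_b(\mathbb R^d_z))}$ uniformly in $\varepsilon$.

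Next I would compare the $\Xi^B$-norm with the periodic norm $\|u\|_{B,Y\times Z}$ on $\mathcal C_{per}(Y\times Z)$. The point is that for a $Y$-periodic (in the first slot) function the change of variables $x\mapsto x/\varepsilon$ together with the periodicity and a Riemann-sum/tiling argument shows that $\sup_{0<\varepsilon\le 1}\inf\{k>0:\int_{B_d(\omega,1)}B(\|u(\frac x\varepsilon,\cdot)\|_{L^\infty}/k)\,dx\le1\}$ is bounded by a constant (depending only on $d$ and on $\mathrm{diam}\,B_d(\omega,1)$) times the Luxemburg norm of $y\mapsto\|u(y,\cdot)\|_{L^\infty}$ on the single cell $Y$, which in turn is $\le c\|u\|_{B,Y\times Z}$ since $\|u(y,\cdot)\|_{L^\infty(Z)}$ is controlled by the $L^\infty(Z)$, hence the $L^B(Z)$, norm of $u(y,\cdot)$ on the compact period $Z$. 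This gives $\|u\|_{\Xi^B}\le C\|u\|_{B,Y\times Z}$ for all $u\in\mathcal C_{per}(Y\times Z)$, and combining with the previous paragraph yields the desired inequality for smooth periodic $u$ with a constant $C$ independent of $\varepsilon$.

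Finally I would remove the smoothness assumption by density. Given $u\in\mathfrak X^B_{per}(\mathbb R^d_y;\mathcal C_b(\mathbb R^d_z))$, pick $u_n\in\mathcal C_{per}(Y\times Z)$ with $u_n\to u$ in $\Xi^B$; the estimate just proved shows $(u_n^\varepsilon)_n$ is Cauchy in $L^B(B_d(\omega,1))$ for each $\varepsilon$ and, more importantly, that the map $u\mapsto u^\varepsilon$ extends continuously with the same bound $\|u^\varepsilon\|_{B,B_d(\omega,1)}\le C\|u\|_{\Xi^B}$; since $\|u_n\|_{B,Y\times Z}$ is comparable to $\|u_n\|_{\Xi^B}\to\|u\|_{\Xi^B}$, and the periodic norm $\|\cdot\|_{B,Y\times Z}$ is, on $\mathfrak X^B_{per}$, itself controlled by the $\Xi^B$-norm (this is exactly where the definition of $\mathfrak X^B_{per}$ as a closure is used, together with the fact that for periodic functions the cell norm is recovered in the limit), one passes to the limit $n\to\infty$ and obtains $\|u^\varepsilon\|_{B,B_d(\omega,1)}\le C\|u\|_{B,Y\times Z}$ for every $u\in\mathfrak X^B_{per}$ and every $0<\varepsilon\le1$. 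The main obstacle I anticipate is the second step: making the tiling/scaling comparison between the $\varepsilon$-dependent integrals over the fixed ball and the single-cell integral uniform in $\varepsilon$ and in the Luxemburg (rather than $L^p$) scale, since one cannot simply factor constants out of an N-function; this is handled by using convexity of $B$ together with the $\Delta_2$ condition assumed throughout, which allows the absorption of the bounded overlap coming from the finitely many translated periods covering $B_d(\omega,1)$.
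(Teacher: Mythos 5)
The paper does not reproduce a proof of this lemma (it refers to \cite[Section 2]{FNZOpuscula}), so I assess your argument on its own terms. Your first step and your final density passage are fine: the pointwise bound $|u^{\varepsilon}(x)|\leq \left\Vert u\left(\frac{x}{\varepsilon},\cdot\right)\right\Vert_{L^{\infty}}$ together with the monotonicity of $B$ does give $\left\Vert u^{\varepsilon}\right\Vert_{B,B_d(\omega,1)}\leq \left\Vert u\right\Vert_{\Xi^{B}\left(\mathbb R^d_y;\mathcal C_b(\mathbb R^d_z)\right)}$, and once a uniform bound is available on $\mathcal C_{per}(Y\times Z)$ it transfers to the closure. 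The fatal problem is your second step, where you claim $\left\Vert u\right\Vert_{\Xi^{B}\left(\mathbb R^d_y;\mathcal C_b(\mathbb R^d_z)\right)}\leq C\left\Vert u\right\Vert_{B,Y\times Z}$ on the grounds that ``$\left\Vert u(y,\cdot)\right\Vert_{L^{\infty}(Z)}$ is controlled by the $L^{\infty}(Z)$, hence the $L^{B}(Z)$, norm of $u(y,\cdot)$.'' This reverses the embedding: on the bounded cell $Z$ one has $L^{\infty}(Z)\hookrightarrow L^{B}(Z)$, i.e.\ $\left\Vert v\right\Vert_{B,Z}\leq c\left\Vert v\right\Vert_{L^{\infty}(Z)}$, and there is no estimate in the direction you need. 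The inequality you assert is in fact false: take $u(y,z)=v_n(z)$ with $v_n$ a (smoothed) bump of height $n$ supported on a set of measure $1/n$ in $Z$. Then $\left\Vert u\left(\frac{x}{\varepsilon},\cdot\right)\right\Vert_{L^{\infty}}\equiv n$, so the left-hand side equals $n/B^{-1}\left(1/|B_d(\omega,1)|\right)$, while $\left\Vert u\right\Vert_{B,Y\times Z}=n/B^{-1}(n)$; the ratio is $c\,B^{-1}(n)\to\infty$. Note that Proposition \ref{prop2.1} records precisely the opposite inequality $\left\Vert u\right\Vert_{B,Y\times Z}\leq c\left\Vert u\right\Vert_{\Xi^{B}}$, and if your step 2 held the two norms would be equivalent on $\mathcal C_{per}(Y\times Z)$, forcing $\mathfrak{X}^{B}_{per}\left(\mathbb R^d_y;\mathcal C_b(\mathbb R^d_z)\right)$ to coincide with all of $L^{B}_{per}(Y\times Z)$ — contrary to the whole point of introducing it as a proper closed subspace under a strictly stronger norm.

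Conceptually, the pointwise domination $|u^{\varepsilon}(x)|\leq\left\Vert u\left(\frac{x}{\varepsilon},\cdot\right)\right\Vert_{L^{\infty}}$ throws away exactly the averaging in the fast variable $z=x/\varepsilon^{2}$ that makes the lemma true, so no argument routed through the $\mathcal C_b(\mathbb R^d_z)$-valued norm can succeed. A correct proof must treat the diagonal trace two-scale structure directly: tile $B_d(\omega,1)$ by cubes of side $\varepsilon^{2}$, on each of which $x/\varepsilon^{2}$ sweeps a full period $Z$ (so a change of variables produces $\varepsilon^{2d}\int_Z B(|u(\cdot,z)|/k)\,dz$) while $x/\varepsilon$ is confined to a set of diameter $O(\varepsilon)$; then sum the resulting Riemann sums over the $\varepsilon$-grid in the $y$-variable. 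This is the Riemann--Lebesgue-type mechanism the paper alludes to for the equivalence of $\left\Vert u\right\Vert_{\Xi^{B}}$ (the two-variable scaled norm) with $\left\Vert u\right\Vert_{B,Y\times Z}$, restricted to the diagonal $t=y/\varepsilon$, $\zeta=y/\varepsilon^{2}$; the uniformity in $\varepsilon$ and the role of continuity in the first variable (to freeze $x/\varepsilon$ on each small cube) are the genuine difficulties, not the bounded overlap of the covering.
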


\begin{lemma}
	The operator $M$ defined on $\mathcal{C}_{per}\left( Y\times Z\right) $ by \eqref{M}
	can be
	extended (with the same notation) by continuity to a unique linear and continuous operator
	from $\mathfrak{X}_{per}^{B}\left( 
	\mathbb R_y^d;\mathcal C_b(\mathbb R^d_z)\right) $\ to $
	\mathbb R$ in such a way that it results non negative and  translation invariant.
\end{lemma}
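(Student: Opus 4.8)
The plan is to extend the mean value operator $M$ from the dense subspace $\mathcal{C}_{per}(Y\times Z)$ to all of $\mathfrak X_{per}^B(\mathbb R_y^d;\mathcal C_b(\mathbb R_z^d))$ by a standard density/continuity argument, the point being to produce the right quantitative bound on $\mathcal{C}_{per}(Y\times Z)$ in terms of the $\Xi^B$-norm. First I would record that, for $u\in\mathcal C_{per}(Y\times Z)$, by periodicity and a change of variables the average $M(u)=\iint_{Y\times Z}u\,dydz$ equals $\mathcal L^d(B_d(\omega,1))^{-1}\int_{B_d(\omega,1)} u(\tfrac{x}{\varepsilon},\tfrac{x}{\varepsilon^2})\,dx + o(1)$ as $\varepsilon\to 0$ (this is the elementary ``mean-value'' convergence for periodic continuous functions evaluated on the reiterated oscillating argument $x\mapsto(x/\varepsilon,x/\varepsilon^2)$; it follows from the Riemann–Lebesgue-type lemma for periodic functions, or can be cited from \cite{nnang reit}). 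Combining this with the inequality displayed just before Lemma \ref{lemma2.2}, namely
\begin{equation*}
\left|\int_{B_d(\omega,1)} u\!\left(\tfrac{x}{\varepsilon},\tfrac{x}{\varepsilon^2}\right)dx\right|\le 2\|1\|_{\widetilde B,B_d(\omega,1)}\,\|u\|_{\Xi^B(\mathbb R_y^d;\mathcal C_b(\mathbb R_z^d))},
\end{equation*}
and passing to the limit $\varepsilon\to0$, yields $|M(u)|\le C\|u\|_{\Xi^B(\mathbb R_y^d;\mathcal C_b(\mathbb R_z^d))}$ for every $u\in\mathcal C_{per}(Y\times Z)$, with $C=\mathcal L^d(B_d(\omega,1))^{-1}\,2\|1\|_{\widetilde B,B_d(\omega,1)}$.

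Once this bound is in place, $M$ is a bounded (hence uniformly continuous) linear functional on the subspace $\mathcal C_{per}(Y\times Z)$, which is dense in $\mathfrak X_{per}^B(\mathbb R_y^d;\mathcal C_b(\mathbb R_z^d))$ by definition of the latter as the closure of $\mathcal C_{per}(Y\times Z)$ in $\Xi^B$. By the bounded linear transformation theorem (extension by density of a uniformly continuous map into the complete space $\mathbb R$), $M$ extends uniquely to a linear continuous functional on $\mathfrak X_{per}^B(\mathbb R_y^d;\mathcal C_b(\mathbb R_z^d))$, which I keep denoting $M$; the operator norm is preserved, so the same constant $C$ works. It remains to check that the extension inherits non-negativity and translation invariance. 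For non-negativity: if $u\in\mathfrak X_{per}^B$ with $u\ge0$, approximate $u$ in $\Xi^B$-norm by $u_n\in\mathcal C_{per}(Y\times Z)$; replacing $u_n$ by $u_n^+:=\max(u_n,0)$ (which is still in $\mathcal C_{per}(Y\times Z)$, is non-negative, and still converges to $u$ since $|u_n^+-u|=|u_n^+-u^+|\le|u_n-u|$ pointwise, hence in the lattice-compatible norm $\Xi^B$), we have $M(u_n^+)\ge0$ by property (i) of $M$ on $\mathcal C_{per}$, and continuity gives $M(u)=\lim_n M(u_n^+)\ge0$. For translation invariance: each translation $\tau_c$ acts isometrically on $\Xi^B$ (by a change of variables in the defining integral, using that $U$ ranges over all open sets, resp. that the defining ball can be recentered), and preserves $\mathcal C_{per}(Y\times Z)$, on which $M\circ\tau_c=M$ by property (iv); a density argument then propagates the identity $M(\tau_c u)=M(u)$ to all of $\mathfrak X_{per}^B$.

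The only genuinely delicate point is the first step: one must make sure the estimate $|M(u)|\le C\|u\|_{\Xi^B}$ is obtained correctly, since $M(u)$ is \emph{a priori} defined by an integral over the fixed period $Y\times Z$ while the $\Xi^B$-norm is built from averages over $B_d(\omega,1)$ of the $\varepsilon$-oscillating trace; bridging the two requires precisely the mean-value convergence $\int_{B_d(\omega,1)}u(\tfrac x\varepsilon,\tfrac x{\varepsilon^2})\,dx\to \mathcal L^d(B_d(\omega,1))\,M(u)$ for $u\in\mathcal C_{per}(Y\times Z)$. This convergence, and the measurability/well-definedness of the traces involved, are exactly the reiterated-scale trace facts recalled from \cite[Section 2]{FNZOpuscula} and \cite{nnang reit} in this subsection, so I would invoke them rather than reprove them. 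Everything else — the density extension and the inheritance of non-negativity and translation invariance — is routine functional analysis once the norm bound on $\mathcal C_{per}(Y\times Z)$ is secured.
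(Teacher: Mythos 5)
Your proof is correct, and it follows exactly the route the paper intends: the paper itself does not prove this lemma (it is recalled from \cite[Section 2]{FNZOpuscula}), but the estimate displayed immediately before Lemma \ref{lemma2.2}, namely $\left|\int_{B_d(\omega,1)}u\left(\frac{x}{\varepsilon},\frac{x}{\varepsilon^2}\right)dx\right|\leq 2\left\Vert 1\right\Vert_{\widetilde B, B_d(\omega,1)}\left\Vert u\right\Vert_{\Xi^B}$, is precisely the ingredient you combine with the multiscale Riemann--Lebesgue convergence $\int_{B_d(\omega,1)}u\left(\frac{x}{\varepsilon},\frac{x}{\varepsilon^2}\right)dx\to\mathcal L^d(B_d(\omega,1))\,M(u)$ to get $|M(u)|\leq C\|u\|_{\Xi^B}$ on the dense subspace, after which the extension, the positivity via $u_n^+$ (using that the $\Xi^B$-norm is a lattice norm), and the translation invariance are routine. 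The only point worth tightening is the isometry claim for translations on $\Xi^B$: a $y$-translation by $a$ turns the defining integral over $B_d(\omega,1)$ into one over $B_d(\omega,1)+\varepsilon a$, so one should either invoke the equivalence of $\|\cdot\|_{\Xi^B}$ with $\|\cdot\|_{B,Y\times Z}$ on $\mathfrak X^B_{per}$ (Lemma \ref{lemma2.2} and Proposition \ref{prop2.1}), the latter norm being manifestly translation invariant for periodic functions, or use the $\sup$ over all open sets $U$ in \eqref{Xi} as you suggest; either way the continuity of $\tau_c$ needed for the density argument holds.
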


 Finally we recall that $\mathfrak{X}_{per}^{B}\left(\mathbb R_y^d;\mathcal C_b(\mathbb R^d_z)\right) $ can be endowed with another norm, considering the set $\mathfrak{X}%
_{per}^{B}\left( 
\mathbb{R}_{y}^d\times 
\mathbb{R}_{z}^d\right) $ as the closure of $\mathcal{C}_{per}\left( Y\times Z\right) $
in $L_{loc}^{B}\left( 
\mathbb{R}_{y}^d\times 
\mathbb{R}
_{z}^d\right) $ with the norm 
\begin{equation*}\left\Vert u\right\Vert _{\Xi^{B}}:= \sup_{0<\varepsilon \leq 1}\left\|u\left(\frac{x}{\varepsilon}, \frac{y}{\varepsilon}\right)\right\|_{B, (B_d(\omega,1))^2}.
\end{equation*}

Via Riemann-Lebesgue lemma it can be proven that the above norm is equivalent to 
$\|u\|_{L^B(Y\times Z)},
$ thus in the sequel we will consider this one. 
For completeness, we state the following result, whose proof is in the Appendix of \cite{FNZOpuscula}. It states that the latter norm is controlled by the one defined in \eqref{ornLBPer}, thus together with Lemma \ref{lemma2.2}, it provides the eqivalence among the introduced norms in $\mathfrak{X}_{per}^B(\mathbb R^d_y;\mathcal C_b(\mathbb R^d_z))$.

\begin{proposition}\label{prop2.1}
	It results that
	$\mathfrak{X}_{per}^{B}\left(
	\mathbb R_y^d;\mathcal C_b(\mathbb R^d_z)\right) \subset L_{per}^{B}\left( Y\times Z\right) =
	\mathfrak{X}_{per}^{B}\left( 
	\mathbb R_y^d\times 
	\mathbb R_z^d\right) $ and $\left\Vert u\right\Vert _{B,Y \times Z }\leq c\left\Vert
	u\right\Vert _{\Xi ^{B}\left(
		\mathbb R_y^d;\mathcal C_b\left( 
		\mathbb R_z^d\right) \right) }$ for all $u\in \mathfrak{X}_{per}^{B}\left( 
	\mathbb R_y^d;\mathcal C_b(\mathbb R^d_z)\right) .$
\end{proposition}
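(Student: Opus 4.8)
The plan is to establish the two assertions separately: first the set-theoretic inclusion $\mathfrak{X}_{per}^{B}(\mathbb R_y^d;\mathcal C_b(\mathbb R^d_z)) \subset L_{per}^{B}(Y\times Z) = \mathfrak{X}_{per}^{B}(\mathbb R_y^d\times\mathbb R_z^d)$, and then the norm estimate $\|u\|_{B,Y\times Z}\le c\|u\|_{\Xi^{B}(\mathbb R_y^d;\mathcal C_b(\mathbb R_z^d))}$. Both will follow by a density argument starting from $\mathcal C_{per}(Y\times Z)$, which is dense in each of the three spaces in play by construction. So the heart of the matter is the a priori inequality on smooth periodic functions: for $u\in\mathcal C_{per}(Y\times Z)$ one must show
\begin{equation*}
\|u\|_{B,Y\times Z}\le c\,\|u\|_{\Xi^{B}(\mathbb R_y^d;\mathcal C_b(\mathbb R_z^d))}.
\end{equation*}

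First I would unwind the right-hand side. By definition \eqref{ornLBPer}, $\|u\|_{\Xi^{B}(\mathbb R_y^d;\mathcal C_b(\mathbb R_z^d))}$ is the supremum over $0<\varepsilon\le 1$ of the Luxemburg norm on $B_d(\omega,1)$ of the function $x\mapsto \|u(x/\varepsilon,\cdot)\|_{L^\infty_z}$. Fix $\varepsilon$ small (to be chosen as a reciprocal of a large integer, or handled via the integer part $[\,\cdot\,]$ as in the notation section); the key point is that as $x$ ranges over a ball of radius $1$, the rescaled variable $x/\varepsilon$ sweeps out many full periods of the $Y$-cube, so by the periodicity of $u$ in its first variable and a change of variables, the integral $\int_{B_d(\omega,1)}B\big(\|u(x/\varepsilon,\cdot)\|_{L^\infty_z}/k\big)\,dx$ is, up to boundary-layer terms that vanish as $\varepsilon\to 0$, comparable to $|B_d(\omega,1)|\int_{Y}B\big(\|u(y,\cdot)\|_{L^\infty_z}/k\big)\,dy$ — a Riemann-Lebesgue / periodic-averaging computation. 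Passing to the limit $\varepsilon\to 0$ (the supremum dominates the limit) then yields that $\|u\|_{\Xi^{B}}$ controls, up to a dimensional constant, the Luxemburg norm on $Y$ of $y\mapsto\|u(y,\cdot)\|_{L^\infty_z}$. Since $B$ is nondecreasing, $B(|u(y,z)|)\le B(\|u(y,\cdot)\|_{L^\infty_z})$ pointwise, so this in turn dominates $\|u\|_{B,Y\times Z}$; this gives the desired inequality on $\mathcal C_{per}(Y\times Z)$ with an explicit constant $c$ depending only on $d$.

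With the a priori estimate in hand, I would conclude as follows. Given $u\in\mathfrak{X}_{per}^{B}(\mathbb R_y^d;\mathcal C_b(\mathbb R_z^d))$, take $u_n\in\mathcal C_{per}(Y\times Z)$ with $u_n\to u$ in $\Xi^{B}(\mathbb R_y^d;\mathcal C_b(\mathbb R_z^d))$. The inequality above shows $(u_n)$ is Cauchy in the norm $\|\cdot\|_{B,Y\times Z}$, hence converges to some $v\in L_{per}^{B}(Y\times Z)$; identifying $v$ with $u$ (e.g. along a subsequence converging a.e., or through the continuous embedding $L^B\hookrightarrow L^1_{\rm loc}$ recalled in point (iii) of Section \ref{O-Sspaces}) gives the inclusion, and passing to the limit in the inequality gives $\|u\|_{B,Y\times Z}\le c\|u\|_{\Xi^{B}(\mathbb R_y^d;\mathcal C_b(\mathbb R_z^d))}$. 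The identity $L_{per}^{B}(Y\times Z)=\mathfrak{X}_{per}^{B}(\mathbb R_y^d\times\mathbb R_z^d)$ is exactly the statement that $\mathcal C_{per}(Y\times Z)$ is dense in $L_{per}^{B}(Y\times Z)$ for the norm $\|\cdot\|_{B,Y\times Z}$, which holds because $B\in\Delta_2$.

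The main obstacle is the periodic-averaging step: one must make rigorous that, uniformly in the Luxemburg parameter $k$, the oscillating integral $\int_{B_d(\omega,1)}B\big(\|u(x/\varepsilon,\cdot)\|_{L^\infty_z}/k\big)\,dx$ converges to (a constant times) $\int_Y B\big(\|u(y,\cdot)\|_{L^\infty_z}/k\big)\,dy$ as $\varepsilon\to0$, and then transfer this convergence of integrals into a clean inequality between infima (Luxemburg norms). The boundary-layer correction — cubes of the $\varepsilon$-lattice that stick out of the unit ball — must be controlled, which is where one uses that $\|u\|_\infty<\infty$ and $B$ is finite-valued; choosing $\varepsilon=1/m$ with $m\in\mathbb N$ and exploiting translation invariance of the mean operator $M$ (previous lemma) streamlines this. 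Since the analogous statement in the Sobolev setting is proved in \cite[Appendix]{FNZOpuscula}, the argument here is a matter of carrying the same scheme through with $B$-modular quantities in place of $L^p$ norms, using the $\Delta_2$ condition on $B$ and $\widetilde B$ wherever uniformity in $k$ is needed.
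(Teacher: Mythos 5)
Your proposal is correct and follows the scheme the paper relies on: the paper itself only cites the Appendix of \cite{FNZOpuscula} for this proof, but the surrounding text (the Riemann--Lebesgue remark on the equivalence of $\|\cdot\|_{\Xi^B}$ with $\|\cdot\|_{B,Y\times Z}$ on $\mathfrak{X}^B_{per}(\mathbb R^d_y\times\mathbb R^d_z)$) makes clear the intended argument is exactly your periodic-averaging estimate on $\mathcal C_{per}(Y\times Z)$ followed by density. The one step to make explicit is the passage from convergence of modulars to the inequality between Luxemburg norms, and it requires no uniformity in $k$: for any fixed $k>\|u\|_{\Xi^{B}(\mathbb R^d_y;\mathcal C_b(\mathbb R^d_z))}$ the modular over $B_d(\omega,1)$ is $\leq 1$ for every $\varepsilon$, so letting $\varepsilon\to 0$ gives $\mathcal L^d(B_d(\omega,1))\int_Y B\left(\|u(y,\cdot)\|_{\infty}/k\right)dy\leq 1$, and convexity of $B$ then yields the estimate with $c=\max\{1,\mathcal L^d(B_d(\omega,1))^{-1}\}$.
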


\subsection{Reiterated two-scale convergence in Orlicz spaces}
\label{reitdef}
In this subsection we recall the results proven in \cite{FNZOpuscula}, which, in turn, generalize  on the one hand to the Orlicz setting the notions introduced in \cite{ngu0} and, on the other, to the multiscale setting the results introduced in \cite{fotso nnang 2012, nnang reit, Nnang These} (cf. \cite{All1, All2, FZ, V} among a wide literature in the Sobolev setting). 
For the sake of exposition and having in mind the applications to remainder on the paper, we assume, within the section, that $B$ and $\tilde B$ satisfy \eqref{Delta2}.


\noindent Recalling the spaces introduced in subsection \ref{O-Sspaces}, we start by defining reiterated two-scale convergence:

\begin{definition}\label{def3s}
	A sequence of functions $\left( u_{\varepsilon }\right) _{\varepsilon}  \subseteq L^{B}\left( \Omega \right) $ is said to be:
	\begin{itemize}
		\item[-]weakly reiteratively two-scale convergent in $L^{B}\left( \Omega \right) $
		to $u_{0}\in L^{B}\left(\Omega;L^B_{per}(Y\times
		Z)\right) $ if 
		\begin{equation}
		\label{2}
		\int_{\Omega }u_{\varepsilon }f^{\varepsilon }dx\rightarrow 
		\iiint_{\Omega \times Y\times Z}u_{0}fdxdydz, \hbox{ for all } f\in L^{%
			\widetilde{B}}\left( \Omega ;\mathcal{C}_{per}\left( Y\times Z\right) \right),
		\end{equation}%
		as $\varepsilon \to 0$,
		\item[-]strongly reiteratively two-scale convergent in $L^{B}\left( \Omega \right) $\
		to $u_{0}\in L^{B}\left( \Omega;L^B_{per}( Y\times Z)\right) $\ if for $%
		\eta >0$ and $f\in L^{B}\left( \Omega ;\mathcal{C}_{per}\left( Y\times
		Z\right) \right) $ verifying $\left\Vert u_{0}-f\right\Vert _{B,\Omega
			\times Y\times Z}\leq \frac{\eta }{2}$ there exists $\rho >0$ such that $%
		\left\Vert u_{\varepsilon }-f^{\varepsilon }\right\Vert _{B,\Omega }\leq
		\eta $ for all $0<\varepsilon \leq \rho.$
	\end{itemize}
\end{definition}

When (\ref{2}) happens 
we denote it by "$%
u_{\varepsilon }\rightharpoonup u_{0}$ in $L^{B}\left( \Omega
\right)-$\ weakly reiteratively two-scale " 
and we say that 
$u_{0}$ is the weak reiterated two-scale limit in $L^{B}\left( \Omega
\right) $ of the sequence $\left( u_{\varepsilon }\right) _{\varepsilon}.$

\begin{remark}
	The above definition is given in the scalar setting, but it extends to vector valued functions, arguing in components.
\end{remark}

The following results have been proven in \cite[Subsection 2.3]{FNZOpuscula}. We report them since they will be used in the sequel.

\begin{proposition}
	If $u\in L^{B}\left( \Omega ;\mathcal{C}_{per}\left( Y\times Z\right)
	\right) $ then (with the notation in subsection \ref{traces}) $u^{\varepsilon }\rightharpoonup  $u in $L^{B}\left(
	\Omega \right) $ weakly reiteratively two-scale, and we have $\underset{\varepsilon
		\rightarrow 0}{\lim }\left\Vert u^{\varepsilon }\right\Vert _{B,\Omega
	}=\left\Vert u\right\Vert _{B,\Omega \times Y\times Z}.$
\end{proposition}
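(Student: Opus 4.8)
The plan is to prove the two assertions in the Proposition: first that $u^\varepsilon \rightharpoonup u$ weakly reiteratively two-scale in $L^B(\Omega)$ whenever $u \in L^B(\Omega;\mathcal C_{per}(Y\times Z))$, and second that $\|u^\varepsilon\|_{B,\Omega} \to \|u\|_{B,\Omega\times Y\times Z}$.

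\medskip
\noindent\textbf{Step 1: Weak reiterated two-scale convergence.}
Fix a test function $f \in L^{\widetilde B}(\Omega;\mathcal C_{per}(Y\times Z))$. Then the product $(x,y,z) \mapsto u(x,y,z)f(x,y,z)$ belongs to $L^1(\Omega;\mathcal C_{per}(Y\times Z))$ by the H\"older inequality (i) in subsection \ref{O-Sspaces}, and it is $Y\times Z$-periodic in $(y,z)$ for a.e.\ $x$. Hence, writing $w(x,y,z):=u(x,y,z)f(x,y,z)$, one has $\int_\Omega w^\varepsilon(x)\,dx = \int_\Omega w\left(x,\frac{x}{\varepsilon},\frac{x}{\varepsilon^2}\right)dx$. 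The convergence $\int_\Omega w^\varepsilon \,dx \to \iiint_{\Omega\times Y\times Z} w\,dxdydz$ is exactly the (scalar) reiterated Riemann–Lebesgue lemma for $L^1(\Omega;\mathcal C_{per}(Y\times Z))$, which is established in \cite[Subsection 2.3]{FNZOpuscula}; it is first proved for $w$ of product form $w=\varphi\otimes g$ with $\varphi \in \mathcal C(\overline\Omega)$ or $\mathcal D(\Omega)$ and $g\in\mathcal C_{per}(Y\times Z)$ (where the periodicity and the change of variables $y=x/\varepsilon$, $z=y/\varepsilon$ give the averages $M$), then extended by density in $L^1(\Omega;\mathcal C_{per}(Y\times Z))$ using the uniform bound $|\int_\Omega w^\varepsilon dx|\le \int_\Omega \|w(x,\cdot,\cdot)\|_\infty dx = \|w\|_{L^1(\Omega;\mathcal C_{per})}$. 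Applying this with $w=uf$ gives precisely \eqref{2}, i.e.\ $u^\varepsilon \rightharpoonup u$ weakly reiteratively two-scale in $L^B(\Omega)$. The only point to check is that $u\in L^B(\Omega;\mathcal C_{per}(Y\times Z))$ indeed makes $uf$ integrable with the stated bound; this is the H\"older estimate together with $|u^\varepsilon(x)|\le\|u(x,\cdot,\cdot)\|_\infty$ from \eqref{estuepsi}, which also yields that $(u^\varepsilon)$ is bounded in $L^B(\Omega)$ via \eqref{tracebounds}, so the statement is meaningful.

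\medskip
\noindent\textbf{Step 2: Convergence of norms.}
Apply Step 1 (or directly the scalar reiterated Riemann–Lebesgue lemma) to the function $B(|u(\cdot,\cdot,\cdot)|/k)$, which, for each fixed $k>0$, belongs to $L^1(\Omega;\mathcal C_{per}(Y\times Z))$ because $u\in L^B(\Omega;\mathcal C_{per}(Y\times Z))$ and $B$ is continuous and nondecreasing (so $x\mapsto \|B(|u(x,\cdot,\cdot)|/k)\|_\infty = B(\|u(x,\cdot,\cdot)\|_\infty/k)$ is in $L^1(\Omega)$ when $k$ is large enough, and for small $k$ one uses monotonicity). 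This gives, for every $k>0$,
\begin{equation*}
\int_\Omega B\!\left(\frac{|u^\varepsilon(x)|}{k}\right)dx \longrightarrow \iiint_{\Omega\times Y\times Z} B\!\left(\frac{|u(x,y,z)|}{k}\right)dxdydz \quad\text{as }\varepsilon\to0.
\end{equation*}
From this one passes to convergence of the Luxemburg norms. Denote $k_\varepsilon:=\|u^\varepsilon\|_{B,\Omega}$ and $k_0:=\|u\|_{B,\Omega\times Y\times Z}$. Using the $\Delta_2$ assumption on $B$ (so that $\int_\Omega B(|u^\varepsilon|/k_\varepsilon)dx=1$ and $\iiint B(|u|/k_0)=1$, i.e.\ the defining integrals equal exactly $1$), a standard argument shows $k_\varepsilon\to k_0$: for any fixed $k>k_0$ the limiting integral $\iiint B(|u|/k)<1$ by strict monotonicity of $B$ and $\Delta_2$, hence $\int_\Omega B(|u^\varepsilon|/k)dx<1$ for $\varepsilon$ small, giving $\limsup_\varepsilon k_\varepsilon\le k$; letting $k\downarrow k_0$ yields $\limsup_\varepsilon k_\varepsilon\le k_0$. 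Symmetrically, for $0<k<k_0$ the limiting integral exceeds $1$, forcing $\int_\Omega B(|u^\varepsilon|/k)dx>1$ eventually, hence $k_\varepsilon\ge k$ and $\liminf_\varepsilon k_\varepsilon\ge k_0$. Combining, $\|u^\varepsilon\|_{B,\Omega}\to\|u\|_{B,\Omega\times Y\times Z}$.

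\medskip
\noindent\textbf{Main obstacle.}
The genuinely substantive ingredient is the reiterated Riemann–Lebesgue lemma in $L^1(\Omega;\mathcal C_{per}(Y\times Z))$ — the passage to the limit $\int_\Omega w(x,x/\varepsilon,x/\varepsilon^2)dx \to \iiint w$ for general $w$ — together with the measurability of the traces $w^\varepsilon$ in the multiscale setting; but this is supplied by \cite[Section 2]{FNZOpuscula} and \cite[Section 2]{nnang reit}, and is explicitly quoted in the paragraph preceding the Proposition. The remaining difficulty is purely the bookkeeping of Luxemburg norms under the $\Delta_2$ hypothesis: one must be careful that the defining integral in the Luxemburg norm is attained (value exactly $1$), which is where $\Delta_2$ is used, and that the monotone/continuity arguments above are applied only after fixing $k$, so that the convergence from the Riemann–Lebesgue lemma can be invoked for that fixed $k$. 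No new analytic estimate beyond H\"older's inequality \eqref{estuepsi}, \eqref{tracebounds} and the cited lemma is needed.
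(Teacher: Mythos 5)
Your proof is correct and follows the route the paper itself relies on: the paper gives no proof here but defers to \cite[Subsection 2.3]{FNZOpuscula}, where the argument is exactly the combination you use --- the reiterated Riemann--Lebesgue lemma in $L^{1}(\Omega;\mathcal C_{per}(Y\times Z))$ applied to $uf$ (equivalently, density of regular functions plus the trace bound \eqref{tracebounds}) for the weak convergence, and convergence of the modulars $\int_{\Omega}B(|u^{\varepsilon}|/k)\,dx$ combined with the $\Delta_{2}$ condition to upgrade to convergence of Luxemburg norms. One small mis-attribution: the integrability of $B(\|u(x,\cdot,\cdot)\|_{\infty}/k)$ for \emph{every} $k>0$ (in particular small $k$) does not follow from monotonicity of $B$, which goes the wrong way; it is precisely the standing $\Delta_{2}$ assumption on $B$ in this subsection that guarantees the modular is finite for all $k>0$, and you should cite that rather than monotonicity.
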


%
Next we recall a sequential compactness result in the Orlicz setting.

\begin{proposition}\label{propcomp}
	Given a bounded sequence $\left(
	u_{\varepsilon}\right)_{\varepsilon}\subset L^{B}\left( \Omega
	\right),$ one can extract a not relabeled subsequence such that $\left(u_{\varepsilon }\right)_{\varepsilon}$ is
	weakly reiteratively two-scale convergent in $L^{B}\left( \Omega \right) .$
\end{proposition}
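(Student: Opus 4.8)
\textbf{Proof proposal for Proposition \ref{propcomp}.}

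The plan is to mimic the classical two-scale compactness argument of Nguetseng--Allaire, transferred to the Orlicz setting and to the reiterated (three-scale) situation, exploiting the separability/reflexivity infrastructure recalled in subsection \ref{O-Sspaces} together with the trace estimates of subsection \ref{traces}. First I would set up the right functional-analytic frame: the test function space $L^{\widetilde B}(\Omega;\mathcal C_{per}(Y\times Z))$ is separable (it is a Bochner space over a separable Banach space, $\mathcal C_{per}(Y\times Z)$ being separable for the sup-norm), so one can fix a countable dense subset $\{f_n\}_{n\in\mathbb N}$. For each $f_n$, the trace $f_n^\varepsilon(x)=f_n(x,x/\varepsilon,x/\varepsilon^2)$ is well defined, lies in $L^{\widetilde B}(\Omega)$, and by Lemma \ref{lemma2.2} (applied to $\widetilde B$, which also satisfies $\Delta_2$) together with \eqref{tracebounds} one has a uniform bound $\|f_n^\varepsilon\|_{\widetilde B,\Omega}\le C\|f_n\|_{\widetilde B,\Omega\times Y\times Z}$ for all $0<\varepsilon\le 1$.

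Next I would examine the sequence of bounded linear functionals $L_\varepsilon$ on $L^{\widetilde B}(\Omega;\mathcal C_{per}(Y\times Z))$ defined by $L_\varepsilon(f):=\int_\Omega u_\varepsilon f^\varepsilon\,dx$. Using the H\"older inequality (i) of subsection \ref{O-Sspaces} and the trace bound above, $|L_\varepsilon(f)|\le 2\|u_\varepsilon\|_{B,\Omega}\,\|f^\varepsilon\|_{\widetilde B,\Omega}\le C\,\sup_\varepsilon\|u_\varepsilon\|_{B,\Omega}\,\|f\|_{\widetilde B,\Omega\times Y\times Z}$, so $(L_\varepsilon)_\varepsilon$ is equibounded. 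Since the domain is separable, a diagonal extraction over $\{f_n\}$ followed by the equiboundedness gives a not relabeled subsequence along which $L_\varepsilon(f)$ converges for every $f$ in the dense set, hence (by an $\varepsilon/3$ argument using equiboundedness) for every $f\in L^{\widetilde B}(\Omega;\mathcal C_{per}(Y\times Z))$; call the limit functional $L$. By construction $L$ is linear and bounded on this space, with $\|L\|\le C\sup_\varepsilon\|u_\varepsilon\|_{B,\Omega}$.

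It then remains to \emph{represent} $L$ as integration against some $u_0\in L^B(\Omega;L^B_{per}(Y\times Z))$, i.e. $L(f)=\iiint_{\Omega\times Y\times Z}u_0 f\,dxdydz$, which is exactly the content of \eqref{2}. The natural route is duality: by Proposition \ref{prop2.1} and the norm equivalences recalled there, $L^{\widetilde B}(\Omega;\mathcal C_{per}(Y\times Z))$ sits densely inside $L^{\widetilde B}(\Omega;L^{\widetilde B}_{per}(Y\times Z))$ (here one uses that $\mathcal C_{per}(Y\times Z)$ is dense in $L^{\widetilde B}_{per}(Y\times Z)$, again via $\Delta_2$), and I would show $L$ extends continuously to the latter space; its dual is $L^B(\Omega;L^B_{per}(Y\times Z))$ by the reflexivity/duality statements of subsection \ref{O-Sspaces}, which yields the desired $u_0$ together with $\|u_0\|_{B,\Omega\times Y\times Z}\le C\sup_\varepsilon\|u_\varepsilon\|_{B,\Omega}$. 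The main obstacle is precisely this last extension step: one must verify that $L$ is bounded for the weaker $L^{\widetilde B}(\Omega;L^{\widetilde B}_{per}(Y\times Z))$-norm, not merely the $\Xi^{\widetilde B}$-type norm, and this is where the Riemann--Lebesgue-type estimate $\int_\Omega |f^\varepsilon|\,dx \to \iiint |f|\,dxdydz$ for $f\in\mathcal C_{per}(Y\times Z)$-valued test functions (more precisely, the weak-$*$ behaviour of oscillating traces recorded in subsection \ref{traces} and the norm control $\|u^\varepsilon\|_{B,\Omega}\le C\|u\|_{B,\Omega\times Y\times Z}$) must be invoked to bound $|L(f)|$ by $\|f\|_{\widetilde B,\Omega\times Y\times Z}$, after which density and the Hahn--Banach/duality argument close the proof.
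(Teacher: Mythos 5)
Your proposal is correct and follows essentially the same route as the proof in the cited source \cite{FNZOpuscula} (the paper itself only quotes the result): the Nguetseng-type argument via equibounded linear functionals $L_\varepsilon(f)=\int_\Omega u_\varepsilon f^\varepsilon\,dx$ on the separable space $L^{\widetilde B}(\Omega;\mathcal C_{per}(Y\times Z))$, diagonal extraction, and representation of the limit functional by duality, using the limit relation $\lim_{\varepsilon\to 0}\|f^\varepsilon\|_{\widetilde B,\Omega}=\|f\|_{\widetilde B,\Omega\times Y\times Z}$ to pass from the mixed (sup-in-$(y,z)$) norm to the $L^{\widetilde B}(\Omega\times Y\times Z)$ norm before invoking reflexivity (both $B$ and $\widetilde B$ in $\Delta_2$). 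The only slight imprecision is in your second paragraph, where the equiboundedness bound is stated directly in terms of $\|f\|_{\widetilde B,\Omega\times Y\times Z}$, whereas at that stage one only has the bound in the stronger $L^{\widetilde B}(\Omega;\mathcal C_{per}(Y\times Z))$ norm; you correctly repair this in the representation step.
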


The results in the sequel follow (similarly to the (non reiterated) case in \cite{fotso nnang 2012})) as a consequence of density results in the `standard' setting.

\begin{proposition}
	If a sequence $\left( u_{\varepsilon }\right) _\varepsilon$ is weakly
	reiteratively two-scale convergent in $L^{B}\left( \Omega \right) $
	to $u_0\in L{B}\left( \Omega;L^B_{per}( Y\times Z)\right) $ then
	\begin{itemize}
		\item[(i)]  $u_{\varepsilon }\rightharpoonup\int_{Z}u_0\left(
		\cdot,\cdot,z\right) dz$ in $L^{B}\left( \Omega \right) $ weakly two-scale,
		and
		\item[(ii)]  $u_{\varepsilon }\rightharpoonup \widetilde{u_0}$ in $L^{B}\left(
		\Omega \right) $-weakly as $\varepsilon \to 0$ where $\widetilde{u_0}
		\left( x\right) =\iint_{Y\times Z}u_0\left( x,\cdot,\cdot\right) dydz.$
	\end{itemize}
\end{proposition}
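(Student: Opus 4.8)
The plan is to obtain both statements by feeding the defining relation \eqref{2} suitably degenerate test functions: those constant in the innermost fast variable $z$ for item (i), and those constant in both fast variables $y$ and $z$ for item (ii). Before doing so I would check that the two candidate limits lie in the asserted spaces. Since $B$ is convex and $\mathcal L^d(Y)=\mathcal L^d(Y\times Z)=1$, Jensen's inequality and Fubini's theorem give, for every $k>0$,
\[
\iint_{\Omega\times Y}B\!\left(\frac{1}{k}\Big|\textstyle\int_Z u_0(x,y,z)\,dz\Big|\right)dxdy\;\le\;\iiint_{\Omega\times Y\times Z}B\!\left(\frac{|u_0(x,y,z)|}{k}\right)dxdydz ,
\]
together with the analogous inequality for $\widetilde{u_0}$; taking $k=\|u_0\|_{B,\Omega\times Y\times Z}$ then shows $\int_Z u_0(\cdot,\cdot,z)\,dz\in L^B(\Omega;L^B_{per}(Y))$ (its $Y$-periodicity for a.e.\ $x$ being inherited from $u_0$) and $\widetilde{u_0}\in L^B(\Omega)$, with the corresponding norm bounds.

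For (i) I would fix $g\in L^{\widetilde B}(\Omega;\mathcal C_{per}(Y))$ and view it as $\bar g\in L^{\widetilde B}(\Omega;\mathcal C_{per}(Y\times Z))$ through $\bar g(x,y,z):=g(x,y)$; the identity $\|\bar g(x)\|_\infty=\|g(x)\|_\infty$ makes this inclusion isometric, so $\bar g$ is an admissible test function in \eqref{2}, and since it does not depend on $z$ its trace collapses to $\bar g^\varepsilon(x)=g\!\left(x,\tfrac{x}{\varepsilon}\right)$, the usual single two-scale oscillating test function. Inserting $\bar g$ in \eqref{2} and applying Fubini on the right-hand side yields
\[
\int_\Omega u_\varepsilon\,g\!\left(x,\tfrac{x}{\varepsilon}\right)dx\;\longrightarrow\;\iint_{\Omega\times Y}\Big(\textstyle\int_Z u_0(x,y,z)\,dz\Big)\,g(x,y)\,dxdy ,
\]
which is exactly weak two-scale convergence of $(u_\varepsilon)_\varepsilon$ in $L^B(\Omega)$ to $\int_Z u_0(\cdot,\cdot,z)\,dz$. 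Item (ii) is the same argument run with $g\in L^{\widetilde B}(\Omega)$ regarded as a function of $(x,y,z)$ constant in $(y,z)$: then $g^\varepsilon=g$, and \eqref{2} combined with Fubini gives $\int_\Omega u_\varepsilon g\,dx\to\int_\Omega\widetilde{u_0}\,g\,dx$ for every $g\in L^{\widetilde B}(\Omega)=(L^B(\Omega))'$, i.e.\ $u_\varepsilon\rightharpoonup\widetilde{u_0}$ weakly in $L^B(\Omega)$.

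The step I expect to require the most care is the admissibility of these restricted test classes, namely that the natural inclusions $L^{\widetilde B}(\Omega;\mathcal C_{per}(Y))\hookrightarrow L^{\widetilde B}(\Omega;\mathcal C_{per}(Y\times Z))$ and $L^{\widetilde B}(\Omega)\hookrightarrow L^{\widetilde B}(\Omega;\mathcal C_{per}(Y\times Z))$ are compatible with the trace operator $t^\varepsilon$ of subsection \ref{traces}, so that the traces genuinely reduce to $g(x,x/\varepsilon)$, respectively to $g$. This is precisely where the density of $\mathcal C_{per}(Y\times Z)$ in the relevant spaces and the continuity estimate \eqref{tracebounds} from \cite{FNZOpuscula} are used, exactly as in the non-reiterated case treated in \cite{fotso nnang 2012}; granting this, the remainder is only Fubini and Jensen.
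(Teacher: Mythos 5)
Your proof is correct and follows exactly the route the paper intends: the paper dispatches this proposition by appeal to the density/restriction-of-test-functions argument of the non-reiterated case in \cite{fotso nnang 2012}, which is precisely your mechanism of testing \eqref{2} with functions constant in $z$ (resp.\ in $(y,z)$), whose traces collapse to $g(x,x/\varepsilon)$ (resp.\ $g(x)$), plus Fubini; the Jensen-inequality check that the averaged limits belong to the asserted spaces is a sound way to make the statement well posed.
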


In the sequel we will consider the space 
\begin{align}\label{Xiinfty}\mathfrak{X}_{per}^{B
		,\infty }\left( \mathbb{R}_{y}^{d},\mathcal{C}_b\left( 
	\mathbb{R}
	_{z}^{d}\right) \right)
	:=\mathfrak{X}_{per}^{B
	}\left( 
	\mathbb{R}
	_{y}^{d},\mathcal{C}_b\left( 
	\mathbb{R}
	_{z}^{d}\right) \right) \cap L^{\infty }\left( 
	\mathbb{R}
	_{y}^{d},\mathcal{C}_b\left( 
	\mathbb{R}
	_{z}^{d}\right) \right),
\end{align} endowed with the $L^\infty$ norm.

\begin{proposition}\label{prop2.4}
	If $\left( u_{\varepsilon }\right) _{\varepsilon}$ is
	weakly reiteratively two-scale convergent in $L^{B}\left( \Omega \right) $
	to $u_{0}\in L^{B}\left( \Omega;L^B_{per} (Y\times Z)\right) $ then
	$\int_{\Omega }u_{\varepsilon }f^{\varepsilon }dx\rightarrow
	\iiint_{\Omega \times Y\times Z}u_{0}fdxdydz,$ for all $f\in \mathcal{C}\left( 
	\overline{\Omega }\right) \otimes \mathfrak{X}_{per}^{B,\infty}\left( 
	\mathbb R_y^d;\mathcal C_b(\mathbb R^d_z)\right) .$

Moreover, if $v\in \mathcal{C}\left( \overline{\Omega };\mathfrak{X}_{per}^{B,\infty
	}(\mathbb R_y^d;\mathcal C_b(\mathbb R^d_z)) \right),$ then $v^{\varepsilon } \rightharpoonup v$ in $L^{B}\left( \Omega \right) $- weakly reiteratively two-scale as $\varepsilon
	\rightarrow 0.$
\end{proposition}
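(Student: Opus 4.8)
\textbf{Proof plan for Proposition \ref{prop2.4}.}
The strategy is to prove the first assertion by a density-and-approximation argument, and then deduce the second as an easy corollary. For the first part, fix $f\in \mathcal{C}\left( \overline{\Omega }\right) \otimes \mathfrak{X}_{per}^{B,\infty}\left( \mathbb R_y^d;\mathcal C_b(\mathbb R^d_z)\right)$, so that $f$ is a finite sum $f(x,y,z)=\sum_j \varphi_j(x)\,\psi_j(y,z)$ with $\varphi_j\in \mathcal{C}(\overline\Omega)$ and $\psi_j\in \mathfrak{X}_{per}^{B,\infty}$; by linearity it suffices to treat a single product $f=\varphi\,\psi$. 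Since $\mathfrak{X}_{per}^{B}\left( \mathbb R_y^d;\mathcal C_b(\mathbb R^d_z)\right)$ is by definition the closure of $\mathcal{C}_{per}(Y\times Z)$ in $\Xi^B$, choose $\psi_k\in \mathcal{C}_{per}(Y\times Z)$ with $\psi_k\to \psi$ in $\Xi^B\left( \mathbb R_y^d;\mathcal C_b(\mathbb R^d_z)\right)$; then $\varphi\,\psi_k\in \mathcal{C}(\overline\Omega)\otimes \mathcal{C}_{per}(Y\times Z)\subset L^{\widetilde B}(\Omega;\mathcal{C}_{per}(Y\times Z))$, so the defining relation \eqref{2} applies to each of them:
\begin{equation*}
\int_{\Omega }u_{\varepsilon }(\varphi\,\psi_k)^{\varepsilon }\,dx\;\xrightarrow[\varepsilon\to0]{}\;\iiint_{\Omega \times Y\times Z}u_{0}\,\varphi\,\psi_k\,dxdydz.
\end{equation*}

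The core of the argument is then a uniform-in-$\varepsilon$ control of the error committed by replacing $\psi$ with $\psi_k$. On the left-hand side, estimate
\begin{equation*}
\left|\int_{\Omega }u_{\varepsilon }\big((\varphi\,\psi)^{\varepsilon }-(\varphi\,\psi_k)^{\varepsilon }\big)\,dx\right|\le 2\,\|u_{\varepsilon }\|_{B,\Omega }\,\|\varphi\|_{\infty}\,\|(\psi-\psi_k)^{\varepsilon }\|_{\widetilde B,\Omega},
\end{equation*}
using the Hölder inequality (i) from subsection \ref{O-Sspaces}; here $\|u_\varepsilon\|_{B,\Omega}$ is bounded uniformly in $\varepsilon$ (a bounded sequence being the only reasonable hypothesis in this context, and in any case the trace arguments produce such a bound), and Lemma \ref{lemma2.2} — applied with the $N$-function $\widetilde B$, which by our standing assumption also satisfies $\Delta_2$, after a covering of $\overline\Omega$ by finitely many translated/rescaled copies of $B_d(\omega,1)$ — gives $\|(\psi-\psi_k)^{\varepsilon }\|_{\widetilde B,\Omega}\le C\|\psi-\psi_k\|_{\widetilde B,Y\times Z}\le C'\|\psi-\psi_k\|_{\Xi^B}$ uniformly in $0<\varepsilon\le1$, where the last inequality is Proposition \ref{prop2.1}. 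On the right-hand side, $|\iiint u_0\varphi(\psi-\psi_k)|\le 2\|u_0\|_{B,\Omega\times Y\times Z}\|\varphi\|_\infty\|\psi-\psi_k\|_{\widetilde B,Y\times Z}$ likewise tends to $0$. Combining a three-$\varepsilon$ argument — first fix $k$ large to make both error terms $<\eta/3$, then let $\varepsilon\to0$ — yields the claimed convergence for $f=\varphi\psi$, hence by linearity for all $f$ in the stated tensor space.

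For the second assertion, let $v\in \mathcal{C}\left( \overline{\Omega };\mathfrak{X}_{per}^{B,\infty}(\mathbb R_y^d;\mathcal C_b(\mathbb R^d_z)) \right)$. The trace $v^{\varepsilon}$ is well-defined and, by \eqref{tracebounds} together with Lemma \ref{lemma2.2} and the continuity of the map $x\mapsto \|v(x)\|_{\Xi^B}$ on the compact $\overline\Omega$, the sequence $(v^\varepsilon)_\varepsilon$ is bounded in $L^B(\Omega)$; by Proposition \ref{propcomp} a not relabeled subsequence is weakly reiteratively two-scale convergent to some $w_0\in L^B(\Omega;L^B_{per}(Y\times Z))$. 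Testing this convergence against $f\in \mathcal{C}(\overline\Omega)\otimes \mathfrak{X}_{per}^{B,\infty}$ and simultaneously applying the first part of the proposition to the constant sequence identification $\int_\Omega v^\varepsilon f^\varepsilon dx$ — which, since $v\otimes$-type and $f$ are both in the admissible class, can be computed directly via a Riemann–Lebesgue/mean-value argument (approximating $v$ and $f$ by elementary tensors of periodic continuous functions and using the averaging operator $M$) to converge to $\iiint v f$ — forces $w_0=v$ a.e. Since the limit is independent of the subsequence, the whole sequence converges, i.e. $v^\varepsilon\rightharpoonup v$ weakly reiteratively two-scale.

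The main obstacle is the uniform-in-$\varepsilon$ bound on $\|(\psi-\psi_k)^{\varepsilon }\|_{\widetilde B,\Omega}$: Lemma \ref{lemma2.2} is stated on the fixed reference ball $B_d(\omega,1)$, so one must cover $\overline\Omega$ by finitely many balls of radius $1$ (equivalently rescale), and check that the constant $C$ does not blow up under translation — this is where the periodicity of the approximants $\psi_k$ and the translation invariance built into the norm $\|\cdot\|_{\Xi^B}$ are essential. Verifying that $\widetilde B$ inherits all the structural hypotheses needed for Lemma \ref{lemma2.2} (namely $\Delta_2$) is the other point requiring care, but it is guaranteed by the standing assumption of the subsection.
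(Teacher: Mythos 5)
The paper itself offers no proof of this proposition (it is quoted verbatim from \cite{FNZOpuscula}), so your argument has to stand on its own. Its overall architecture (reduce to $f=\varphi\otimes\psi$, approximate $\psi$ by $\psi_k\in\mathcal C_{per}(Y\times Z)$, apply the definition \eqref{2} to $\varphi\psi_k$, and close with a three-$\varepsilon$ argument) is the right one, but the key estimate contains a genuine error coming from a $B$ versus $\widetilde B$ duality mismatch. Since $u_\varepsilon\in L^B(\Omega)$, the H\"older inequality (i) of subsection \ref{O-Sspaces} forces you to control the error term $(\psi-\psi_k)^\varepsilon$ in the $L^{\widetilde B}(\Omega)$ norm, whereas your approximation $\psi_k\to\psi$ holds only in the $\Xi^B$ norm. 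The chain $\|(\psi-\psi_k)^{\varepsilon}\|_{\widetilde B,\Omega}\le C\|\psi-\psi_k\|_{\widetilde B,Y\times Z}\le C'\|\psi-\psi_k\|_{\Xi^B}$ fails at both links: Lemma \ref{lemma2.2} with the $N$-function $\widetilde B$ applies to elements of $\mathfrak X_{per}^{\widetilde B}$, and you have not shown $\psi-\psi_k$ belongs to that space; and Proposition \ref{prop2.1} bounds $\|\cdot\|_{B,Y\times Z}$, not $\|\cdot\|_{\widetilde B,Y\times Z}$, by $\|\cdot\|_{\Xi^B}$. The inequality $\|g\|_{\widetilde B}\le C\|g\|_{B}$ is simply false in general, even on bounded domains (take $B(t)=t^p$ with $1<p<2$, so $\widetilde B(t)\sim t^{p'}$ with $p'>p$). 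The same defect afflicts your bound for the limit term $\iiint u_0\varphi(\psi-\psi_k)$.

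The repair is precisely where the superscript $\infty$ in $\mathfrak X_{per}^{B,\infty}$ must be used, and your proof never uses it: since $\psi\in L^\infty$, truncate the approximants at level $M:=\|\psi\|_\infty+1$ (truncation preserves continuity and periodicity and does not increase the $\Xi^B$ distance to $\psi$ by more than a fixed factor), so that $\|\psi-\psi_k\|_\infty\le 2M$ uniformly. Then interpolate: Lemma \ref{lemma2.2} and Proposition \ref{prop2.1}, correctly applied with the $N$-function $B$, give $\sup_{0<\varepsilon\le1}\|(\psi-\psi_k)^\varepsilon\|_{B,\Omega}\to0$, hence $\|(\psi-\psi_k)^\varepsilon\|_{L^1(\Omega)}\to0$ uniformly in $\varepsilon$; combined with $\|(\psi-\psi_k)^\varepsilon\|_\infty\le 2M$ and the elementary bound $\widetilde B(s)\le s\,b^{-1}(s)\le s\,b^{-1}(2M/k)$ for $s\le 2M/k$, this makes $\int_\Omega\widetilde B\bigl(|(\psi-\psi_k)^\varepsilon|/k\bigr)dx\le1$ for each fixed $k>0$ once $k$ in the approximation index is large, i.e. $\sup_\varepsilon\|(\psi-\psi_k)^\varepsilon\|_{\widetilde B,\Omega}\to0$. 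Two further points need tightening: the uniform bound on $\|u_\varepsilon\|_{B,\Omega}$ should be derived (Banach--Steinhaus applied to the functionals $f\mapsto\int_\Omega u_\varepsilon f^\varepsilon dx$, tested on $f=f(x)$) rather than declared "the only reasonable hypothesis"; and in the second assertion the identification $w_0=v$ hinges on $\int_\Omega v^\varepsilon f^\varepsilon dx\to\iiint_{\Omega\times Y\times Z}vf\,dxdydz$ for a separating class of $f$, which is not a formal consequence of the first part (here $v\in\mathcal C(\overline\Omega;\mathfrak X_{per}^{B,\infty})$ is not a finite tensor and plays the role of the test function against $f^\varepsilon\in L^{\widetilde B}$, so the roles of $B$ and $\widetilde B$ are interchanged); this needs its own density-plus-mean-value argument, not just an appeal to "a Riemann--Lebesgue computation".
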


\begin{remark}
	\begin{itemize}
		\item[(1)] If $v\in L^{B}\left( \Omega ;\mathcal{C}_{per}\left( Y\times Z\right)
		\right) ,$ then $v^{\varepsilon }\rightarrow v$ in $L^{B}\left( \Omega
		\right) $-strongly reiteratively two-scale as $\varepsilon \rightarrow 0.$
		
		\item[(2)] If $\left( u_{\varepsilon }\right) _{\varepsilon}\subset
		L^{B}\left( \Omega \right) $ is strongly reiteratively two-scale convergent in $
		L^{B}\left( \Omega \right) $\ to $u_{0}\in L^{B}\left( \Omega;L^B_{per}(Y\times Z)\right)$ then 
		\begin{itemize}
			\item[(i)]  $u_{\varepsilon }\rightharpoonup u_{0}$ in $L^{B}\left(
			\Omega \right) $ weakly reiteratively two-scale as $\varepsilon \rightarrow 0;$
			\item[(ii)] $\left\Vert u_{\varepsilon }\right\Vert _{B,\Omega }\rightarrow
			\left\Vert u_{0}\right\Vert _{B,\Omega \times Y\times Z}$ as $
			\varepsilon \rightarrow 0.$
		\end{itemize}
	\end{itemize}
\end{remark}

The following result is key to define weak reiterated two-scale convergence in Orlicz-Sobolev spaces, also providing a sequential compactness result in $
W^{1}L^{B}\left( \Omega \right).$ The proof, which extends with alternative arguments \cite[Theorem 4.1]{fotso nnang 2012}, can be found in \cite[Proposition 2.12]{FNZOpuscula}, see also \cite[Remark 2]{FTGNZ} for the more regularity stated below.

Indeed, recalling that  $L_{per}^B\left(
\Omega;W_{\#}^{1}L^{B}\left( Y\right) \right)$ denotes the space of functions $u \in L^B(\Omega;L^B_{per}( Y))$, such that $u(x,\cdot) \in W_{\#}^{1}L^{B}\left( Y\right) $, for a.e. $x \in \Omega$ and $L_{per}^B\left(
Y;W_{\#}^{1}L^{B}\left( Z\right) \right)$ denotes the space of functions $u \in L^B_{per}(Y\times Z)$, such that $u(y,\cdot) \in W_{\#}^{1}L^{B}\left( Z\right) $, for a.e. $y \in Y$, we have
\color{black}
\begin{proposition}\label{mainprop3s}
	Let $\Omega $ be a bounded open set in $
	\mathbb R_x^d$, and $\left( u_{\varepsilon
	}\right) _{\varepsilon}$ bounded in $W^{1}L^{B}\left( \Omega \right).$ 
	There exist a not relabeled subsequence, $u_{0}\in W^{1}L^{B}\left(
	\Omega \right),$ $\left(u_{1},u_{2}\right) \in L^B_{per}\left(
	\Omega ;W_{\#}^{1}L^{B}\left( Y\right) \right) \times L^B\left(
	\Omega ;L^B_{per}\left( Y;W_{\#}^{1}L^{B}\left( Z\right) \right)
	\right) $ such that:\color{black}
	\begin{itemize}
		\item[(i)] $u_{\varepsilon }\rightharpoonup u_{0}$ weakly reiteratively two-scale in $L^{B}\left(
		\Omega \right) $,
		\item[(ii)] $D_{x_{i}}u_{\varepsilon }\rightharpoonup
		D_{x_{i}}u_{0}+D_{y_{i}}u_{1}+D_{z_{i}}u_{2}$ weakly reiteratively two-scale in $L^{B}\left( \Omega\\
		\right) $, $1\leq i\leq d$,
	\end{itemize}
	as $\varepsilon \to 0$.
\end{proposition}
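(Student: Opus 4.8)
The plan is to deduce the reiterated two-scale compactness in the Orlicz-Sobolev setting from the scalar compactness result (Proposition \ref{propcomp}) applied simultaneously to $u_\varepsilon$ and to each of the partial derivatives $D_{x_i}u_\varepsilon$, followed by an identification of the limits via a suitable class of oscillating test functions. First I would use the boundedness of $(u_\varepsilon)_\varepsilon$ in $W^1L^B(\Omega)$ — which by the equivalence of norms means $(u_\varepsilon)_\varepsilon$ and $(D_{x_i}u_\varepsilon)_\varepsilon$, $1\le i\le d$, are all bounded in $L^B(\Omega)$ — together with Proposition \ref{propcomp} to extract (diagonally, in $d+1$ steps) a single not-relabeled subsequence along which $u_\varepsilon \rightharpoonup u_0$ and $D_{x_i}u_\varepsilon \rightharpoonup w_i$ weakly reiteratively two-scale in $L^B(\Omega)$, for some $u_0, w_i \in L^B(\Omega;L^B_{per}(Y\times Z))$. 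Since $L^B(\Omega)$ is reflexive and the weak reiterated two-scale limit projects (by the Proposition following Definition \ref{def3s} on projections) onto the ordinary weak limit, $u_\varepsilon \rightharpoonup \widetilde{u_0}$ weakly in $L^B(\Omega)$ with $\widetilde{u_0}(x)=\iint_{Y\times Z}u_0(x,\cdot,\cdot)$; using that $(u_\varepsilon)$ is bounded in $W^1L^B(\Omega)$, hence weakly relatively compact there, and that $D_{x_i}u_\varepsilon$ converges weakly to $D_{x_i}\widetilde{u_0}$, one identifies $\widetilde{u_0}$ with an element of $W^1L^B(\Omega)$, which we rename $u_0$; moreover one checks $u_0$ does not depend on $(y,z)$, so $u_0 \in W^1L^B(\Omega)$ as claimed.

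Next I would identify the structure of the $w_i$. The first step is to show $w_i - D_{x_i}u_0$ is a gradient in $y$ up to a $z$-oscillating term. Take a test function of the form $\varphi^\varepsilon(x) = \varphi(x)\,\psi(x/\varepsilon)$ with $\varphi \in \mathcal D(\Omega)$ and $\psi \in \mathcal C^\infty_{per}(Y)$, and integrate $\varepsilon \int_\Omega D_{x_i}u_\varepsilon\,\varphi\,\psi(x/\varepsilon)\,dx$ by parts; passing to the limit using Proposition \ref{prop2.4} (applied to functions in $\mathcal C(\overline\Omega)\otimes \mathfrak X^{B,\infty}_{per}$, to which $\psi$ and its derivatives belong) forces $\iiint w_i\,\varphi\,D_{y_i}\beta\,\cdot$ to vanish for a suitable family, which by the Orlicz analogue of the de Rham / variational characterisation of gradients (the decomposition of divergence-free-orthogonal fields as gradients in $W^1_\#L^B$) yields $w_i = D_{x_i}u_0 + D_{y_i}u_1 + r_i$ with $u_1 \in L^B_{per}(\Omega;W^1_\#L^B(Y))$ and $r_i$ having zero mean in $y$. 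Repeating the argument at the finer scale with test functions $\varphi(x)\psi(x/\varepsilon)\chi(x/\varepsilon^2)$, $\chi \in \mathcal C^\infty_{per}(Z)$, multiplied by $\varepsilon^2$, one shows $r_i = D_{z_i}u_2$ with $u_2 \in L^B(\Omega;L^B_{per}(Y;W^1_\#L^B(Z)))$. Finally, a consistency check — writing the relations for all $i$ simultaneously and invoking symmetry of mixed distributional derivatives — guarantees that the same $u_1$ and $u_2$ work for every $i$, so that $D_{x_i}u_\varepsilon \rightharpoonup D_{x_i}u_0 + D_{y_i}u_1 + D_{z_i}u_2$, which is (ii), while (i) has already been established.

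The main obstacle is the Orlicz-space version of the "corrector" or de Rham step: in the classical $L^p$ setting one uses that a vector field in $L^{p'}$ annihilating all divergence-free smooth periodic test fields is a gradient of a periodic $W^{1,p}$ function, and this rests on Riesz-representation/closed-range arguments that must be transported to $L^B$ with both $B$ and $\widetilde B$ satisfying $\Delta_2$ (so that $L^B$ is reflexive and $(L^B)' = L^{\widetilde B}$, and the relevant Poincaré–Wirtinger inequality on $Y$ and $Z$ holds in the Luxemburg norm). One must also be careful that the $z$-oscillating test functions used at the second scale genuinely lie in the space $\mathfrak X^{B,\infty}_{per}(\mathbb R^d_y;\mathcal C_b(\mathbb R^d_z))$ for which Proposition \ref{prop2.4} grants the limit passage, and that the traces $t^\varepsilon$ behave as expected under the products $\psi(x/\varepsilon)\chi(x/\varepsilon^2)$; these measurability and density facts are exactly what subsection \ref{traces} and Proposition \ref{prop2.1} were set up to provide, so the remaining work is bookkeeping rather than new ideas. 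Since the statement asserts this is proven in \cite[Proposition 2.12]{FNZOpuscula} (with the extra regularity from \cite[Remark 2]{FTGNZ}), I would present the above as the structure and refer to those sources for the technical de Rham lemma in the Orlicz setting.
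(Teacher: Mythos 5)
First, a point of comparison: the paper does not actually prove Proposition \ref{mainprop3s} in the text --- it is imported verbatim from \cite[Proposition 2.12]{FNZOpuscula} --- so the relevant benchmark is the standard multiscale compactness argument carried out there. Your architecture (componentwise extraction via Proposition \ref{propcomp} for $u_\varepsilon$ and each $D_{x_i}u_\varepsilon$, identification of the limits by oscillating test functions, and an Orlicz de Rham/orthogonality lemma producing $u_1$ and $u_2$) is exactly that argument, and your list of obstacles (reflexivity of $L^B$ under the $\Delta_2$ conditions, the duality $(L^B)'=L^{\widetilde B}$, admissibility of the test class in $\mathfrak X^{B,\infty}_{per}$) is on target.

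There is, however, a concrete mix-up in your identification step. Pairing $D_{x_i}u_\varepsilon$ with $\varepsilon\,\varphi(x)\psi(x/\varepsilon)$ and integrating by parts gives
\[
\varepsilon\int_\Omega D_{x_i}u_\varepsilon\,\varphi\,\psi^\varepsilon\,dx
=-\varepsilon\int_\Omega u_\varepsilon\,D_{x_i}\varphi\,\psi^\varepsilon\,dx
-\int_\Omega u_\varepsilon\,\varphi\,\left(D_{y_i}\psi\right)^\varepsilon\,dx .
\]
The left-hand side and the first term on the right vanish as $\varepsilon\to0$ because $(u_\varepsilon)$ and $(D_{x_i}u_\varepsilon)$ are bounded in $L^B(\Omega)$, so in the limit you obtain $\iiint_{\Omega\times Y\times Z} u_0\,\varphi\,D_{y_i}\psi\,dxdydz=0$: this is information about $u_0$ (its independence of $y$; the $\varepsilon^2$-weighted analogue with $\chi(x/\varepsilon^2)$ gives independence of $z$ and thus that $u_0\in W^1L^B(\Omega)$), \emph{not} the orthogonality relation $\iiint w_i\cdots=0$ you claim it yields. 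To characterize $w=(w_i)$ you must pair $Du_\varepsilon$ \emph{unscaled} against vector-valued oscillating fields $\Psi(x,x/\varepsilon,x/\varepsilon^2)$ subject to divergence constraints in the fast variables (e.g. ${\rm div}_z\Psi=0$ and ${\rm div}_y\!\int_Z\Psi\,dz=0$, with appropriate vanishing means), so that after integration by parts the singular terms $\varepsilon^{-1}({\rm div}_y\Psi)^\varepsilon$ and $\varepsilon^{-2}({\rm div}_z\Psi)^\varepsilon$ drop out and the limit reads $\iiint (w-Du_0)\cdot\Psi\,dxdydz=0$; only then does the de Rham/orthogonality lemma in $W^1_{\#}L^B$ apply to produce $u_1$ and $u_2$ (simultaneously for all $i$, since the lemma is applied to the full vector $w-Du_0$ rather than component by component). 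This step needs to be repaired, or explicitly delegated, together with the rest, to \cite{FNZOpuscula}.
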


\begin{corollary}
	If $\left( u_{\varepsilon }\right) _{\varepsilon}$ is such that$\ \
	u_{\varepsilon }\rightharpoonup v_{0}$ weakly reiteratively two-scale in $W^{1}L^{B}\left( \Omega \right)$,  we have:
	\begin{itemize}
		\item[(i)] 	$u_{\varepsilon }\rightharpoonup
		\int_{Z}v_0\left( \cdot ,\cdot ,z\right) dz$ weakly two-scale in $W^{1}L^{B}\left( \Omega \right) $, 
		\item[(ii)] $u_{\varepsilon }\rightharpoonup 
		\widetilde{v_0}$ in $W^{1}L^{B}\left( \Omega \right) $-weakly, where $\widetilde{v_0}\left( x\right) =
		\iint_{Y\times Z}v_0\left( x,\cdot ,\cdot\right) dydz.$ 
	\end{itemize}  
\end{corollary}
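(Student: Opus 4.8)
The plan is to reduce the statement to the de-scaling property already established for weak reiterated two-scale convergence in $L^{B}(\Omega)$ — namely, that a sequence converging weakly reiteratively two-scale in $L^{B}(\Omega)$ to some $w$ converges weakly two-scale in $L^{B}(\Omega)$ to $\int_{Z}w(\cdot,\cdot,z)\,dz$ and weakly in $L^{B}(\Omega)$ to $\widetilde w=\iint_{Y\times Z}w(\cdot,\cdot,\cdot)\,dy\,dz$ — applied separately to $u_{\varepsilon}$ and to each scalar component $D_{x_{i}}u_{\varepsilon}$, and then re-assembled at the level of $W^{1}L^{B}(\Omega)$. By Proposition~\ref{mainprop3s}, the hypothesis ``$u_{\varepsilon}\rightharpoonup v_{0}$ weakly reiteratively two-scale in $W^{1}L^{B}(\Omega)$'' carries with it a boundedness of $(u_{\varepsilon})_{\varepsilon}$ in $W^{1}L^{B}(\Omega)$, a limit $u_{0}\in W^{1}L^{B}(\Omega)$ and correctors $(u_{1},u_{2})\in L_{per}^{B}(\Omega;W_{\#}^{1}L^{B}(Y))\times L^{B}(\Omega;L_{per}^{B}(Y;W_{\#}^{1}L^{B}(Z)))$ such that $u_{\varepsilon}\rightharpoonup v_{0}=u_{0}$ and $D_{x_{i}}u_{\varepsilon}\rightharpoonup D_{x_{i}}u_{0}+D_{y_{i}}u_{1}+D_{z_{i}}u_{2}$ weakly reiteratively two-scale in $L^{B}(\Omega)$, $1\le i\le d$; in particular the reiterated two-scale limit $v_{0}=u_{0}$ of $(u_{\varepsilon})_{\varepsilon}$ does not depend on $y$ or $z$, so that $\int_{Z}v_{0}(\cdot,\cdot,z)\,dz=\widetilde{v_{0}}=u_{0}$.

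I would then apply the above de-scaling proposition — which extends componentwise to vector-valued sequences by the Remark following Definition~\ref{def3s} — to the $L^{B}(\Omega)$-bounded sequences $u_{\varepsilon}$ and $D_{x_{i}}u_{\varepsilon}$. For $u_{\varepsilon}$ this yields immediately $u_{\varepsilon}\rightharpoonup u_{0}$ weakly two-scale in $L^{B}(\Omega)$ and $u_{\varepsilon}\rightharpoonup u_{0}=\widetilde{v_{0}}$ weakly in $L^{B}(\Omega)$. For $D_{x_{i}}u_{\varepsilon}$ it yields $D_{x_{i}}u_{\varepsilon}\rightharpoonup \int_{Z}\bigl(D_{x_{i}}u_{0}+D_{y_{i}}u_{1}+D_{z_{i}}u_{2}\bigr)(\cdot,\cdot,z)\,dz$ weakly two-scale in $L^{B}(\Omega)$ and $D_{x_{i}}u_{\varepsilon}\rightharpoonup \iint_{Y\times Z}\bigl(D_{x_{i}}u_{0}+D_{y_{i}}u_{1}+D_{z_{i}}u_{2}\bigr)\,dy\,dz$ weakly in $L^{B}(\Omega)$. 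Using that $u_{2}(x,y,\cdot)$ is $Z$-periodic, one has $\int_{Z}D_{z_{i}}u_{2}(x,y,z)\,dz=0$ for a.e.\ $(x,y)$, so the first limit equals $D_{x_{i}}u_{0}+D_{y_{i}}u_{1}$; using in addition that $u_{1}(x,\cdot)$ is $Y$-periodic, $\iint_{Y\times Z}\bigl(D_{y_{i}}u_{1}+D_{z_{i}}u_{2}\bigr)\,dy\,dz=0$, so the second limit equals $D_{x_{i}}u_{0}$.

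It then remains to recognise these limits as the derivatives of the de-scaled limits. For (ii): $u_{\varepsilon}\rightharpoonup\widetilde{v_{0}}$ and $D_{x_{i}}u_{\varepsilon}\rightharpoonup D_{x_{i}}\widetilde{v_{0}}$ weakly in $L^{B}(\Omega)$ (recall $\widetilde{v_{0}}=u_{0}$); since $L^{B}(\Omega)\hookrightarrow\mathcal{D}'(\Omega)$ and distributional differentiation is continuous on $\mathcal{D}'(\Omega)$, weak $L^{B}(\Omega)$-limits are preserved under it, whence $u_{\varepsilon}\rightharpoonup\widetilde{v_{0}}$ weakly in $W^{1}L^{B}(\Omega)$, which is (ii). For (i): $u_{\varepsilon}\rightharpoonup u_{0}=\int_{Z}v_{0}(\cdot,\cdot,z)\,dz$ weakly two-scale in $L^{B}(\Omega)$ and $D_{x_{i}}u_{\varepsilon}\rightharpoonup D_{x_{i}}u_{0}+D_{y_{i}}u_{1}$ weakly two-scale in $L^{B}(\Omega)$ with $u_{1}\in L_{per}^{B}(\Omega;W_{\#}^{1}L^{B}(Y))$, which is precisely the structure characterising weak two-scale convergence in $W^{1}L^{B}(\Omega)$ in the single-scale ($Y$-scale) compactness theorem of \cite{fotso nnang 2012} — the one-scale analogue of Proposition~\ref{mainprop3s} — so (i) follows. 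The only genuinely delicate points are the interchange of the partial-averaging operators $\int_{Z}(\cdot)\,dz$ and $\iint_{Y\times Z}(\cdot)\,dy\,dz$ with the distributional derivatives $D_{x_{i}}$ — handled by a density argument together with Fubini's theorem on $L^{B}$-functions, available thanks to the $\triangle_{2}$ assumptions on $B$ and $\widetilde{B}$ — and the vanishing of the period-averages of the correctors' gradients, which is exactly why the finest-scale corrector drops out of the $Z$-averaged limit in (i) and both correctors drop out of the $(Y\times Z)$-averaged limit in (ii); the rest is bookkeeping.
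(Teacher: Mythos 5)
Your argument is correct and is the natural (and evidently intended) route: the paper states this corollary without proof immediately after Proposition~\ref{mainprop3s}, and the componentwise reduction to the $L^{B}(\Omega)$ de-scaling proposition applied to $u_{\varepsilon}$ and to each $D_{x_{i}}u_{\varepsilon}$, followed by the observation that the period-averages $\int_{Z}D_{z_{i}}u_{2}\,dz$ and $\iint_{Y\times Z}(D_{y_{i}}u_{1}+D_{z_{i}}u_{2})\,dydz$ vanish, is exactly the single-scale argument of the cited references lifted to the reiterated setting. I see no gap.
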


In view of the next applications, we underline that, under the assumptions of the above proposition, the canonical injection $%
W^{1}L^{B}\left( \Omega \right) \hookrightarrow L^{B}\left( \Omega \right) $
is compact.

\begin{remark}\label{rem 2.13}
If $\left( v_{\varepsilon }\right) _{\varepsilon }\subset L^{B}\left(
\Omega \right) $ and $v_{\varepsilon }\rightharpoonup  v_{0}$ weakly reiteratively two-scale in $%
L^{B}\left( \Omega \right) $, as $\varepsilon \rightarrow 0$%
\; and for any $\varepsilon >0: v_{\varepsilon }\geq 0$ $a.e$ in $%
\Omega \times Y\times Z$ then $v_{0}\geq 0$ $a.e$ in $\Omega \times Y\times
Z.$
\end{remark}

\color{black}

\section{Weak solution of \eqref{1.1}}\label{solweak}

\noindent The aim of this section consists of showing the existence of a weak solution to \eqref{1.1}, neglecting the periodicity assumptions on $a$.
 
Given $\left( v,\mathbf{V}\right) =\left( v,\left( v_{i}\right) \right) \in 
\mathcal{C}\left( \overline{\Omega }
\right) \times \mathcal{C}\left( \overline{\Omega }\right)^{d}$ and $1\leq i\leq d,$ one can check, using assumptions $\left( H_{1}\right)
-\left( H_{4}\right),$ that the function $\left( x,y,z\right) \rightarrow
a_{i}\left( y,z,v\left( x\right) ,\mathbf{V}\left( x\right) \right) $ of $%
\overline{\Omega }\times 
\mathbb{R}
^{d}\times 
\mathbb{R}
^{d}$ into $%
\mathbb{R}
$ belongs to $\mathcal{C}\left( \overline{\Omega };L^{\infty }\left( 
\mathbb{R}
_{y}^{d};\mathcal{C}_{per}\left( Z\right) \right) \right) .$ Hence for fixed 
$\varepsilon >0$, the function $x\rightarrow a_{i}\left( \frac{x}{\varepsilon 
},\frac{x}{\varepsilon ^{2}},v\left( x\right) ,\mathbf{V}\left( x\right)
\right) $ of $\Omega $ into $%
\mathbb{R}
$ denoted by $a_{i}^{\varepsilon }\left( \cdot, \cdot,v,\mathbf{V}\right) $ is well
defined by a function in $L^{\infty }\left( \Omega 
\right) $ see \cite[Remark 2.1]{nnang reit}. In particular we have the following:

\begin{proposition}\label{propalphabeta}
	Let $(a_i)_{1\leq i \leq d}$ be the functions in \eqref{1.1} and assume that they satisfy the assumptions $(H_1)-(H_4)$. Let $a^{\varepsilon }\left( \cdot, \cdot,v,%
	\mathbf{V}\right) =\left( a_{i}^{\varepsilon }\left( \cdot, \cdot,v,\mathbf{V}\right)
	\right) _{1\leq i\leq d},$ then
the transformation $\left( v,\mathbf{V}\right) \rightarrow a^{\varepsilon
}\left( \cdot, \cdot,v,\mathbf{V}\right) $ of $\ \mathcal{C}\left( \overline{\Omega }
\right) \times \mathcal{C}\left( \overline{\Omega }
\right) ^{d}$ into $L^{\infty }(\Omega)^{d}$ extends by continuity to a mapping, still denoted by $\left( v,%
\mathbf{V}\right) \rightarrow a^{\varepsilon }\left( \cdot, \cdot,v,\mathbf{V}\right)
,$ from $L^{\Phi }\left( \Omega
\right) \times L^{\Phi }\left( \Omega
\right) ^{d}$ into $L^{\widetilde{\Phi }}\left( \Omega
\right) ^{d}$ such that
\begin{equation}\label{Lest}
\left\Vert a^{\varepsilon }\left( \cdot, \cdot,v,\mathbf{V}\right) -a^{\varepsilon
}\left( \cdot, \cdot,w,\mathbf{W}\right) \right\Vert _{\widetilde{\Phi },\Omega }\leq
c\left\Vert v-w\right\Vert _{\Phi ,\Omega }^{\alpha }+c^{\prime }\left\Vert
\bf V-\bf W\right\Vert _{\Phi ,\Omega }^{\beta },  
\end{equation}
for all $\left( v,\mathbf{V}\right) ,\left( w,\mathbf{W}\right) \in L^{\Phi
}\left( \Omega
\right) \times L^{\Phi }\left( \Omega
\right) ^{d}$ where $c,c^{\prime }>0$ and $\alpha ,\beta \in \left\{ \frac{%
\rho _{1}}{\rho _{0}}\left( \rho _{0}-1\right) ,\frac{\rho _{2}}{\rho _{0}}%
\left( \rho _{0}-1\right) ,\right.$ 

\noindent$\left.\rho _{1}-1,\frac{\rho _{2}}{\rho _{1}}\left(
\rho _{1}-1\right) \right\},$
with the constants $\rho_0,\rho_1$ and $\rho_2$ as in \eqref{1.2}.
\end{proposition}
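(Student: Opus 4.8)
The plan is to establish \eqref{Lest} first for continuous data and then to pass to the general case by density. Fix $\varepsilon>0$. For $(v,\mathbf V)\in\mathcal C(\overline\Omega)\times\mathcal C(\overline\Omega)^{d}$, $a^{\varepsilon}(\cdot,\cdot,v,\mathbf V)$ is the genuine function $x\mapsto a\bigl(\tfrac{x}{\varepsilon},\tfrac{x}{\varepsilon^{2}},v(x),\mathbf V(x)\bigr)$, which lies in $L^{\infty}(\Omega)^{d}\subset L^{\widetilde\Phi}(\Omega)^{d}$ (as $\Omega$ is bounded) by the discussion preceding the statement and the measurability of traces recalled in Subsection~\ref{traces}. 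Evaluating the structure condition \eqref{1.3} along these oscillating traces, one gets for a.e.\ $x\in\Omega$ and each $1\le i\le d$
\[
\bigl|a_{i}^{\varepsilon}(x,\cdot,v,\mathbf V)-a_{i}^{\varepsilon}(x,\cdot,w,\mathbf W)\bigr|\le c_{1}\,\widetilde\Psi^{-1}\!\bigl(\Phi(c_{2}|v(x)-w(x)|)\bigr)+c_{3}\,\widetilde\Phi^{-1}\!\bigl(\Phi(c_{4}|\mathbf V(x)-\mathbf W(x)|)\bigr),
\]
so that by the triangle inequality in $L^{\widetilde\Phi}(\Omega)$ and summation over $i$ it suffices to bound the $L^{\widetilde\Phi}(\Omega)$-norm of each of the two functions on the right-hand side.

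Both terms are handled by the same device: converting a nested $N$-function composition of the type $\widetilde B^{-1}\!\circ\Phi$ into a power of an Orlicz norm, using the standard consequences of \eqref{1.2} (the two-sided estimates $\min(\lambda^{p},\lambda^{q})B(t)\le B(\lambda t)\le\max(\lambda^{p},\lambda^{q})B(t)$ valid for $B\in\{\Phi,\Psi,\widetilde\Phi,\widetilde\Psi\}$ with the pair of indices read off from \eqref{1.2} — respectively $(\rho_{1},\rho_{2})$, $(\rho_{0},\rho_{1})$, $\bigl(\tfrac{\rho_{2}}{\rho_{2}-1},\tfrac{\rho_{1}}{\rho_{1}-1}\bigr)$, $\bigl(\tfrac{\rho_{1}}{\rho_{1}-1},\tfrac{\rho_{0}}{\rho_{0}-1}\bigr)$ — together with the norm--modular equivalence, which holds since each of these $N$-functions belongs to $\triangle_{2}$; see \cite{ada,Cle,MR}). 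Concretely, for the gradient term put $g_{2}(x):=\widetilde\Phi^{-1}\!\bigl(\Phi(c_{4}|\mathbf V(x)-\mathbf W(x)|)\bigr)$; then $\widetilde\Phi(g_{2}(x))=\Phi(c_{4}|\mathbf V(x)-\mathbf W(x)|)\le c\,\Phi(|\mathbf V(x)-\mathbf W(x)|)$, hence $\int_{\Omega}\widetilde\Phi(g_{2})\,dx\le c\int_{\Omega}\Phi(|\mathbf V-\mathbf W|)\,dx\le c\max\!\bigl(\|\mathbf V-\mathbf W\|_{\Phi,\Omega}^{\rho_{1}},\|\mathbf V-\mathbf W\|_{\Phi,\Omega}^{\rho_{2}}\bigr)$, and transforming this modular bound into a bound on the Luxemburg norm of $g_{2}$ yields $\|g_{2}\|_{\widetilde\Phi,\Omega}\le c'\|\mathbf V-\mathbf W\|_{\Phi,\Omega}^{\beta}$, $\beta$ being one of the exponents in the statement (which one occurs is dictated by whether the norms at play are $\le1$ or $\ge1$). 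For the lower-order term put $g_{1}(x):=\widetilde\Psi^{-1}\!\bigl(\Phi(c_{2}|v(x)-w(x)|)\bigr)$, so that $\widetilde\Psi(g_{1}(x))=\Phi(c_{2}|v(x)-w(x)|)$; running the same computation in $L^{\widetilde\Psi}(\Omega)$ (now with the indices of $\widetilde\Psi$, whence the $\rho_{0}$-dependent exponents) gives $\|g_{1}\|_{\widetilde\Psi,\Omega}\le c'\|v-w\|_{\Phi,\Omega}^{\alpha}$, and the continuous embedding $L^{\widetilde\Psi}(\Omega)\hookrightarrow L^{\widetilde\Phi}(\Omega)$ on the bounded set $\Omega$ — valid precisely because $\Phi$ dominates $\Psi$, i.e.\ $\widetilde\Phi\prec\widetilde\Psi$ — upgrades this to $\|g_{1}\|_{\widetilde\Phi,\Omega}\le c\,\|g_{1}\|_{\widetilde\Psi,\Omega}\le c'\|v-w\|_{\Phi,\Omega}^{\alpha}$. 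Combining the two bounds proves \eqref{Lest} on $\mathcal C(\overline\Omega)\times\mathcal C(\overline\Omega)^{d}$.

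Finally, \eqref{Lest} exhibits $(v,\mathbf V)\mapsto a^{\varepsilon}(\cdot,\cdot,v,\mathbf V)$ as an H\"older-uniformly continuous map (note $\alpha,\beta>0$ since $\rho_{0}>1$) from the dense subspace $\mathcal C(\overline\Omega)\times\mathcal C(\overline\Omega)^{d}$ of $L^{\Phi}(\Omega)\times L^{\Phi}(\Omega)^{d}$ (density holds because $\Phi\in\triangle_{2}$, so $\mathcal D(\Omega)$ is dense) into the Banach space $L^{\widetilde\Phi}(\Omega)^{d}$; it therefore extends uniquely to a continuous map on all of $L^{\Phi}(\Omega)\times L^{\Phi}(\Omega)^{d}$, still denoted the same way, and both sides of \eqref{Lest} being continuous the estimate survives the extension. (That the values indeed lie in $L^{\widetilde\Phi}(\Omega)^{d}$ also follows, e.g.\ by taking $(w,\mathbf W)=(0,\omega)$ in \eqref{Lest} and using $(H_{1})(ii)$, i.e.\ $a(\cdot,\cdot,0,\omega)\in L^{\infty}(\mathbb R^{d}_{y}\times\mathbb R^{d}_{z})$, whose trace sits in $L^{\infty}(\Omega)\subset L^{\widetilde\Phi}(\Omega)$.) The one genuinely technical point is the middle step: carrying the compositions $\widetilde\Psi^{-1}\!\circ\Phi$ and $\widetilde\Phi^{-1}\!\circ\Phi$ through the chain of $\triangle_{2}$/power inequalities and the norm--modular equivalence so as to land on the clean H\"older bounds — in particular, factoring the lower-order term through $L^{\widetilde\Psi}$ (rather than directly through $L^{\widetilde\Phi}$) is exactly what manufactures the exponents involving $\rho_{0}$, and is permitted only thanks to the global domination $\Psi\prec\Phi$.
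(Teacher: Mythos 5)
Your argument is correct and is precisely the computation that the paper outsources to \cite[Proposition 2.3]{Nnang Orlicz 2014}: pointwise use of \eqref{1.3} along the oscillating traces, conversion of the resulting modular bounds into Luxemburg-norm bounds via the power-type inequalities encoded in \eqref{1.2} (routing the zero-order term through $L^{\widetilde\Psi}$ and then using $\widetilde\Phi\prec\widetilde\Psi$), and extension by density. In particular your case analysis on whether the relevant norms are $\leq 1$ or $\geq 1$ reproduces exactly the four admissible exponents listed in the statement, consistent with the paper's remark that $\alpha,\beta$ are ``deduced according to the value of the Luxemburg norm on the right hand side of \eqref{Lest}.''
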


\begin{proof}
The proof is entirely similar to the one of \cite[Proposition 2.3]{Nnang Orlicz 2014}, relying in turn on \cite{Cle, KP}, which we refer to. Indeed, also the constants' values  $\alpha$ and $\beta$ are deduced according to the value of the Luxemburg norm on the right hand side of \eqref{Lest}.
\end{proof}

\color{black}

By $(i)$ and $(ii)$ in $(H_1)$, the assumptions on $f$, \eqref{1.3} in $(H_2)$, $(H_3)$ and $(H_4)$ it follows that for every $\varepsilon >0$, the assumptions of \cite[Theorem 3.2]{Y} are satisfied, hence there exists $%
u_{\varepsilon }\in W_{0}^{1}L^{\Phi }\left( \Omega
\right) \cap L^{\infty }\left( \Omega\right) $ weak solution of \eqref{1.1}, i.e. \begin{align}
	\exists  u_{\varepsilon }\in W_{0}^{1}L^{\Phi }\left( \Omega
	\right) \cap L^{\infty }\left( \Omega\right) \hbox{ such that } \label{eqvarepsilon}\\
	\int_{\Omega }a\left( \frac{x}{\varepsilon },\frac{x}{%
\varepsilon ^{2}},u_{\varepsilon },Du_{\varepsilon }\right)
Dvdx=\int_{\Omega }fvdx, \hbox{ for every }v\in  W_{0}^{1}L^{\Phi }\left( \Omega
\right). \nonumber
\end{align} Thus, by \eqref{1.4} in $(H_4)$ and $(H_5)$, it follows that 
\begin{align*}\int_{\Omega }\theta \Phi \left( \left\vert
Du_{\varepsilon }\right\vert \right) dx\leq \int_{\Omega }a\left( \frac{x}{%
\varepsilon },\frac{x}{\varepsilon ^{2}},u_{\varepsilon },Du_{\varepsilon
}\right) Du_{\varepsilon }dx=\int_{\Omega }fu_{\varepsilon }dx\leq
2\left\Vert f\right\Vert _{\widetilde{\Phi },\Omega }\left\Vert
u_{\varepsilon }\right\Vert _{\Phi ,\Omega },
\end{align*}
where 
\begin{align}
	\label{theta}
\theta:= {\widetilde \Phi^{-1}}(\Phi (\min_{t\geq 0} h(t)) )> 0.
\end{align}
If $\left\Vert \left\vert Du_{\varepsilon }\right\vert \right\Vert _{\Phi
,\Omega }\geq 1,$we have 
\begin{align*}\theta \left\Vert \left\vert Du_{\varepsilon
}\right\vert \right\Vert _{\Phi ,\Omega }^{\sigma }\leq \int_{\Omega }\theta
\Phi \left( \left\vert Du_{\varepsilon }\right\vert \right) dx\leq
2\left\Vert f\right\Vert _{\widetilde{\Phi },\Omega }\left\Vert
u_{\varepsilon }\right\Vert _{\Phi ,\Omega }\leq c\left\Vert \left\vert
Du_{\varepsilon }\right\vert \right\Vert _{\Phi ,\Omega },\sigma >1.
\end{align*} 
We deduce therefore that: $$\underset{0<\varepsilon }{\sup }\left\Vert
 u_{\varepsilon } \right\Vert _{W_{0}^{1}L^{\Phi
}\left( \Omega
\right) }<+\infty .$$

It is easily observed that if $(H_6)$ is satisfied the solution in \eqref{eqvarepsilon} is unique. Indeed, suitable differences between the weak formulations of \eqref{1.1} with two possible solutions allow us to get uniqueness. We refer to Remark \ref{remuniq} for more details regarding uniqueness of the limiting problem.

\section{Homogenization problem}\label{hompbsec}

We intend to investigate the limiting behaviour, as $0<\varepsilon
\rightarrow 0,$ of a sequence of solutions $u_{\varepsilon }$ of \eqref{1.1}, i.e. $u_\varepsilon$ as in \eqref{eqvarepsilon}, taking into account all the assumptions in section \ref{sec1}, in particular requiring that the coefficients $a$ satisfy $(H_1)-(H_4)$ together with the periodicity assumption  $(H_5).$  Hence, we start by showing in suitable spaces of regular functions, where the compositions of functions in the weak formulation of \eqref{1.1} are meaningful, which convergence is appropriate to consider limits as $\varepsilon \to 0$.

For the readers' convenience we put in the Appendix some intermediate results, dealing with the well posedness of the compositions appearing in \eqref{1.1}.  

Indeed, taking into account the just mentioned traces results, making use of the notation in Section \ref{notations} (cf. \eqref{Xiinfty}), under the extra assumption of periodicity on the first two variables of $a$ and for $(w, {\mathbf W})$ periodic, we conclude that, for every $1\leq i \leq d$, the map
$x\rightarrow a_{i}\left(
\cdot, \cdot,w\left( x,\cdot,\cdot\right) ,\mathbf{W}\left( x,\cdot,\cdot\right) \right) $ belongs to $\mathcal{C}\left( \overline{\Omega };\mathfrak{X}_{per}^{\widetilde{\Phi }%
,\infty }\left( \mathbb{R}_{y}^{d},\mathcal{C}_b\left( 
\mathbb{R}
_{z}^{d}\right) \right) \right)$,
where $a_{i}\left( \cdot, \cdot,f \left(
\cdot,\cdot\right) ,\mathbf{f }\left( \cdot,\cdot\right) \right) $ is the function $%
\left( y,z\right) \rightarrow a_{i}\left( y,z,f \left( y,z\right),
\mathbf{f}\left( y,z\right) \right),$ with $\left( f ,\mathbf{f}\right)
\in $
$\mathcal{C}_{per}\left( Y\times Z\right) ^{d+1}.$

Hence, we are in position to state our first convergence result, prior to the proof of our main existence theorem.

\color{black}
\begin{proposition}\label{prop3.3}
For $\left( f,\mathbf{f }\right) \in 
\mathcal{C}\left( \overline{%
\Omega };\mathcal{C}_{per}\left( Y\times Z\right) ^{d+1}\right)$, let $a= (a_i)_{1\leq i \leq d}: \mathbb R^d \times \mathbb R^d \times \mathbb R \times \mathbb R^d \to \mathbb R^d$ satisfy  $(H_1)-(H_5)$, we have that 
\begin{itemize}
	\item[(i)] for each $1\leq i\leq
	d,$
\begin{align*} a_{i}\left( y, z,f \left(x, y,z\right) ,\mathbf{f}%
\left(x, y,z\right) \right) \in \mathcal{C}\left( \overline{\Omega };\mathfrak{%
X}_{per}^{\widetilde{\Phi },\infty }\left( 
\mathbb{R}
_{y}^{d},\mathcal{C}_b\left( 
\mathbb{R}
_{z}^{d}\right) \right) \right) 
\;\hbox{ and}
\\
a_{i}^{\varepsilon }\left( \cdot, \cdot,f ^{\varepsilon }(\cdot, \cdot,\cdot),\mathbf{f }%
^{\varepsilon }(\cdot, \cdot,\cdot)\right) \rightharpoonup a_{i}\left( \cdot, \cdot,f \left(\cdot,\cdot,\cdot\right) ,%
\mathbf{f }\left(\cdot, \cdot,\cdot\right) \right) %
\text{weakly reiteratively two-scale}, 
\nonumber\\
\text{ in }L^{\widetilde{\Phi }}(\Omega), \text{ as }\varepsilon \rightarrow 0.  \nonumber 
\end{align*}

\item[(ii)]
The function $\left( f(\cdot, \cdot,\cdot) ,\mathbf{f }(\cdot, \cdot,\cdot)\right)
\rightarrow a\left( \cdot, \cdot,f ,\mathbf{f }\right) $ from $\mathcal{C}%
\left( \overline{\Omega };\mathcal{C}_{per}\left( Y\times Z\right)
^{d+1}\right) $\ to 

\noindent $L^{\widetilde{\Phi }}\left( \Omega ;L_{per}^{\widetilde{%
\Phi }}\left( Y\times Z\right) \right) ^{d}$\ \ extends by continuity to a (not relabeled)
function from $L^{\Phi }\left( \Omega; L_{per}^{\Phi }\left(
Y\times Z\right) \right) ^{d+1}$\ \ to \ $L^{\widetilde{\Phi }}\left( \Omega
;L_{per}^{\widetilde{\Phi }}\left( Y\times Z\right) \right) ^{d}$, and there exist positive constants $c, c'$ such that: 
\begin{align*}
\left\Vert a\left( \cdot, \cdot,u((\cdot, \cdot,\cdot)),\mathbf{U}(\cdot, \cdot,\cdot)\right) -a\left( \cdot, \cdot,v(\cdot, \cdot,\cdot),\mathbf{V}(\cdot, \cdot,\cdot)\right)
\right\Vert _{L^{\widetilde{\Phi }}\left( \Omega ;L_{per}^{\widetilde{\Phi }%
}\left( Y\times Z\right) \right) ^{d}}\leq  
\nonumber\\ 
c\left\Vert u(\cdot, \cdot,\cdot)-v(\cdot, \cdot,\cdot)\right\Vert _{L^{\Phi }\left( \Omega ;L_{per}^{\Phi }\left(
Y\times Z\right) \right) }^{\alpha }+c^{\prime }\left\Vert \mathbf{U(\cdot, \cdot,\cdot)-V(\cdot, \cdot,\cdot)}%
\right\Vert _{L^{\Phi }\left( \Omega ;L_{per}^{\Phi }\left( Y\times Z\right)
^{d}\right) }^{\beta },
\\
a\left( \cdot, \cdot,u(\cdot, \cdot,\cdot),\mathbf{U}(\cdot, \cdot,\cdot)\right) \cdot\mathbf{U}\geq \theta \Phi \left(
\left\vert \mathbf{U}(\cdot, \cdot,\cdot)\right\vert \right) \hbox{ a.e. in } \Omega \times \mathbb{R}^{d}\times\mathbb{R}
^{d} \label{3.5}
\end{align*}
\end{itemize}
and 
\begin{align*}
\left(a\left( \cdot, \cdot,u(\cdot, \cdot,\cdot)\mathbf{U}(\cdot, \cdot,\cdot)\right) -a\left( \cdot, \cdot,u(\cdot, \cdot,\cdot),\mathbf{V}(\cdot, \cdot,\cdot)\right) \right)\cdot\left( 
\mathbf{U}(\cdot, \cdot,\cdot)-\mathbf{V}(\cdot, \cdot,\cdot)\right) \geq 0
\\
\text{ a.e in }\Omega \times \mathbb{R}^{d}\times \mathbb{R}^{d}, 
\end{align*}%
for all $u,v\in L^{\Phi }\left( \Omega ;L_{per}^{\Phi }\left( Y\times
Z\right) \right) ,\mathbf{U},\mathbf{V}\in L^{\Phi }\left( \Omega
;L_{per}^{\Phi }\left( Y\times Z\right) ^{d}\right),$ where $\alpha ,\beta$ 
 and $\theta $ are defined as in Proposition \ref{propalphabeta} and \eqref{theta}, respectively, and $L^\Phi_{per}$ is as in subsection \ref{O-Sspaces}, with the $N$-function $B$ replaced by $\Phi$.
\end{proposition}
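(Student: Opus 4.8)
The plan is to reduce the statement to the pointwise structural hypotheses $(H_2)$--$(H_4)$, combined with the trace identities of Subsection~\ref{traces}, the convergence tools of Subsection~\ref{reitdef}, and a density argument, in close analogy with the single-scale Proposition~\ref{propalphabeta}.

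For part~(i), the membership $a_i(\cdot,\cdot,f(\cdot,\cdot,\cdot),\mathbf f(\cdot,\cdot,\cdot))\in\mathcal C(\overline\Omega;\mathfrak X_{per}^{\widetilde\Phi,\infty}(\mathbb R_y^d,\mathcal C_b(\mathbb R_z^d)))$ has in fact already been recorded in the paragraph preceding the statement; it follows from the Carathéodory property $(H_1)(i)$--$(ii)$, the periodicity and local continuity in $(H_5)$, the bound $a(\cdot,\cdot,0,\omega)\in L^\infty$ and the Lipschitz estimate \eqref{1.3} (which, for the bounded regular data $(f,\mathbf f)$, forces a bounded, $Y\times Z$-periodic, continuous-in-$x$ profile), together with \cite[Remark 2.1]{nnang reit}. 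One then observes the trace identity $a_i^\varepsilon(\cdot,\cdot,f^\varepsilon(\cdot,\cdot,\cdot),\mathbf f^\varepsilon(\cdot,\cdot,\cdot))=\bigl(a_i(\cdot,\cdot,f(\cdot,\cdot,\cdot),\mathbf f(\cdot,\cdot,\cdot))\bigr)^\varepsilon$ in the sense of Subsection~\ref{traces}, so that the claimed weak reiterated two-scale convergence is precisely the last assertion of Proposition~\ref{prop2.4} applied to $v:=a_i(\cdot,\cdot,f,\mathbf f)$.

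For part~(ii), I would first establish the estimate for regular periodic data $(f,\mathbf f),(g,\mathbf g)\in\mathcal C(\overline\Omega;\mathcal C_{per}(Y\times Z)^{d+1})$ by taking the Luxemburg norm $\|\cdot\|_{\widetilde\Phi,\Omega\times Y\times Z}$ of the pointwise bound \eqref{1.3}. Since $\Phi$ dominates $\Psi$ globally and since $\Phi,\Psi$ and their conjugates satisfy $\Delta_2$ by \eqref{1.2}, the quantities $\|\widetilde\Psi^{-1}(\Phi(c_2|f-g|))\|_{\widetilde\Phi,\Omega\times Y\times Z}$ and $\|\widetilde\Phi^{-1}(\Phi(c_4|\mathbf f-\mathbf g|))\|_{\widetilde\Phi,\Omega\times Y\times Z}$ are controlled by suitable powers of $\|f-g\|_{\Phi,\Omega\times Y\times Z}$ and of $\|\mathbf f-\mathbf g\|_{\Phi,\Omega\times Y\times Z}$, the admissible exponents being exactly the set $\{\alpha,\beta\}$ of Proposition~\ref{propalphabeta}; the bookkeeping is identical to \cite[Proposition 2.3]{Nnang Orlicz 2014} (which in turn relies on \cite{Cle,KP}), and applying it once with $(g,\mathbf g)=(0,\omega)$ also yields $a(\cdot,\cdot,f,\mathbf f)\in L^{\widetilde\Phi}(\Omega;L_{per}^{\widetilde\Phi}(Y\times Z))^d$. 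Because $\mathcal C(\overline\Omega;\mathcal C_{per}(Y\times Z)^{d+1})$ is dense in $L^\Phi(\Omega;L_{per}^\Phi(Y\times Z))^{d+1}$ for the norm $\|\cdot\|_{\Phi,\Omega\times Y\times Z}$ (using $\Phi\in\Delta_2$), the map $(f,\mathbf f)\mapsto a(\cdot,\cdot,f,\mathbf f)$ extends by uniform continuity to a map $L^\Phi(\Omega;L_{per}^\Phi(Y\times Z))^{d+1}\to L^{\widetilde\Phi}(\Omega;L_{per}^{\widetilde\Phi}(Y\times Z))^d$, the displayed Lipschitz estimate being preserved in the closure. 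The coercivity is the pointwise inequality \eqref{1.4}: since $h$ is monotone decreasing and $\widetilde\Phi^{-1}\circ\Phi$ is increasing, $a(y,z,\zeta,\lambda)\cdot\lambda\geq\widetilde\Phi^{-1}(\Phi(h(|\zeta|)))\,\Phi(|\lambda|)\geq\theta\,\Phi(|\lambda|)$ with $\theta$ as in \eqref{theta}, and this survives the passage to the limit; similarly the monotonicity is $(H_4)$ for regular data, passing to the limit along an a.e.-convergent approximating sequence (or via Remark~\ref{rem 2.13} on preservation of sign under weak reiterated two-scale convergence).

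The only genuinely delicate point is the measurability: one must check that the various compositions are jointly measurable in $(x,y,z)$ and land in the correct Orlicz--Bochner spaces of the reiterated, two-parameter periodic setting, and that the pointwise relations $(H_3)$--$(H_4)$ are stable under the continuous extension. Granting the density of regular periodic functions and the trace identities of Subsection~\ref{traces}, the remaining estimates are routine applications of $\Delta_2$, the global domination of $\Psi$ by $\Phi$, and Hölder's inequality in Orlicz spaces.
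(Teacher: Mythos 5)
Your proposal is correct and follows essentially the same route as the paper: part (i) is the membership claim (justified via $(H_1)$, $(H_5)$ and \eqref{1.3}, with details as in the Appendix) followed by an application of Proposition \ref{prop2.4} to the trace, and part (ii) takes the Luxemburg norm of the pointwise bound \eqref{1.3}, invokes the bookkeeping of \cite[Proposition 2.3]{Nnang Orlicz 2014} to obtain the exponents $\alpha,\beta$, and extends by density, with coercivity and monotonicity passed to the limit exactly as via Remark \ref{rem 2.13}. No substantive differences from the paper's argument.
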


\begin{proof}
It is easily seen that, for every $1\leq i \leq d,$ when $\left( f,\mathbf{f}\right) \in \mathcal{C}\left( \overline{\Omega };%
\mathcal{C}_{per}\left( Y\times Z\right) ^{d+1}\right)$,
$a_{i}\left( \cdot, \cdot,f(\cdot, \cdot,\cdot),\mathbf{f}(\cdot, \cdot,\cdot)\right) $ lies in $\mathcal{C}\left( 
\overline{\Omega };\mathfrak{X}_{per}^{\widetilde{\Phi },\infty }(\mathbb R^d_z ;\mathcal C_b(\mathbb R^d_z))\right)$.

Thus, by Proposition \ref{prop2.4}, it follows that the sequence $a_{i}^{\varepsilon }\left(
\cdot, \cdot, f^{\varepsilon }(\cdot, \cdot,\cdot),\mathbf{f}^{\varepsilon }(\cdot, \cdot,\cdot)\right) $ $ \rightharpoonup
a_{i}\left( \cdot, \cdot,f \left(\cdot, \cdot,\cdot\right) ,\mathbf{f}\left( \cdot,\cdot,\cdot\right)
\right) $ weakly reiteratively two-scale  in $L^{\widetilde{\Phi }}(\Omega)$, as $\varepsilon \rightarrow
0.$

Moreover, by $(H_{1})-(H_{4})$ we get 
\begin{align*}
\left\vert a^{\varepsilon }\left( \cdot, \cdot,f^{\varepsilon }(\cdot, \cdot,\cdot),\mathbf{f}%
^{\varepsilon }(\cdot, \cdot,\cdot)\right) -a^{\varepsilon }\left( \cdot, \cdot,f^{\prime \varepsilon }(\cdot, \cdot,\cdot),%
\mathbf{f}^{\prime \varepsilon }(\cdot, \cdot,\cdot)\right) \right\vert \leq 
\\c_{1}\widetilde{%
\Psi }^{-1}\left( \Phi \left( c_{2}\left\vert f^{\varepsilon }(\cdot, \cdot,\cdot)-f^{^{\prime
}\varepsilon }(\cdot, \cdot,\cdot)\right\vert \right) \right) +
c_{3}\widetilde{\Phi }^{-1}\left( \Phi \left( c_{4}\left\vert \mathbf{f}%
^{\varepsilon }(\cdot, \cdot,\cdot)-\mathbf{f}^{\prime \varepsilon }(\cdot, \cdot,\cdot)\right\vert \right) \right)
; 
\\ 
a^{\varepsilon }\left( \cdot, \cdot,f^{\varepsilon }(\cdot, \cdot, \cdot),\mathbf{f}^{\varepsilon
}(\cdot, \cdot,\cdot)\right) \cdot \mathbf{f}^{\varepsilon }(\cdot, \cdot,\cdot)\geq \theta \Phi \left( \left\vert 
\mathbf{f}^{\varepsilon }(\cdot, \cdot,\cdot)\right\vert \right)  (\hbox{with }\theta \hbox{ as in }\eqref{theta});
\\
\left( a^{\varepsilon
}\left( \cdot, \cdot,f^{\varepsilon }(\cdot, \cdot,\cdot),\mathbf{f}^{\varepsilon }(\cdot, \cdot,\cdot)\right)
-a^{\varepsilon }\left( \cdot, \cdot,f^{ \varepsilon }(\cdot, \cdot,\cdot),\mathbf{f}^{\prime
\varepsilon }(\cdot, \cdot,\cdot)\right) \right) \cdot \left( \mathbf{f}^{\varepsilon }(\cdot, \cdot,\cdot)-\mathbf{f}%
^{\prime \varepsilon }(\cdot, \cdot,\cdot)\right) \geq 0,\\
 \hbox{ a.e. in } \Omega, \hbox{ for any }
\varepsilon >0.
\end{align*}
In particular,  arguing as in \cite[Proposition 3.4]{Nnang Orlicz 2014} (cf. also Remark \ref{rem 2.13}) it follows that for a.e $x$ in $\Omega$ and every $(y,z) \times 
\mathbb{R}^{d}\times 
\mathbb{R}
^{d},$
\begin{align*}
\left\vert a\left( \cdot, \cdot,f(\cdot,\cdot,\cdot),\mathbf{f}(\cdot,\cdot,\cdot)\right) -a\left( \cdot, \cdot,f^{\prime }(\cdot,\cdot,\cdot),\mathbf{%
f}^{\prime }(\cdot,\cdot,\cdot)\right) \right\vert \leq 
\\
c_{1}\widetilde{\Psi }^{-1}\left( \Phi
\left( c_{2}\left\vert f(\cdot,\cdot,\cdot)-f^{^{\prime }(\cdot,\cdot,\cdot)}\right\vert \right) \right) +
c_{3}\widetilde{\Phi }^{-1}\left( \Phi \left( c_{4}\left\vert \mathbf{f}(\cdot,\cdot,\cdot)-
\mathbf{f}^{\prime }(\cdot,\cdot,\cdot)\right\vert \right) \right) ;
\\
 a\left( \cdot, \cdot,f(\cdot,\cdot,\cdot),\mathbf{f}(\cdot,\cdot,\cdot)
\right) \cdot\mathbf{f}(\cdot,\cdot,\cdot)\geq \theta \Phi \left( \left\vert \mathbf{f}(\cdot,\cdot,\cdot)\right\vert
\right) ;
\\ 
\left( a\left( \cdot, \cdot,f(\cdot,\cdot,\cdot),\mathbf{f}(\cdot,\cdot,\cdot)\right) -a\left( \cdot, \cdot,f(\cdot, \cdot,\cdot),%
\mathbf{f}^{\prime }(\cdot,\cdot,\cdot)\right) \right) \cdot \left( \mathbf{f}(\cdot,\cdot,\cdot)-\mathbf{f}^{\prime
}(\cdot,\cdot,\cdot)\right) \geq 0, 
\end{align*} where $a\left( \cdot, \cdot,f(\cdot,\cdot,\cdot),\mathbf{f}(\cdot,\cdot,\cdot)\right) =\left( a_{i}\left( \cdot, \cdot,f(\cdot,\cdot,\cdot),%
\mathbf{f}(\cdot,\cdot,\cdot)\right) \right) _{1\leq i\leq d}.$
\color{black}
From the first of the above inequalities, 
\begin{align*}\int_{\Omega }\int_{Y}\int_{Z}%
\widetilde{\Phi }\left( \frac{\left\vert a\left( \cdot, \cdot,f(\cdot,\cdot,\cdot),\mathbf{f}(\cdot,\cdot,\cdot)\right)
-a\left( \cdot, \cdot,f^{\prime }(\cdot,\cdot,\cdot),\mathbf{f}^{\prime }(\cdot,\cdot,\cdot)\right) \right\vert }{\delta }%
\right) dxdydz\leq \\
\int_{\Omega }\int_{Y}\int_{Z}\widetilde{\Phi }\left( \frac{c_{1}\widetilde{%
\Psi }^{-1}\left( \Phi \left( c_{2}\left\vert( f-f^{\prime })(\cdot,\cdot,\cdot)\right\vert
\right) \right) +c_{3}\widetilde{\Phi }^{-1}\left( \Phi \left(
c_{4}\left\vert (\mathbf{f}-\mathbf{f}^{\prime })(\cdot,\cdot,\cdot)\right\vert \right) \right) }{%
\delta }\right) dxdydz,
\end{align*} for $\delta >0.$ Therefore, arguing  analogously to \cite{Nnang Orlicz 2014} and to Proposition \ref{propalphabeta}, we are lead to
\begin{align*}
\left\Vert a\left( \cdot, \cdot,f(\cdot,\cdot,\cdot),\mathbf{f}(\cdot,\cdot,\cdot)\right) -a\left( \cdot, \cdot,f^{\prime }(\cdot,\cdot,\cdot),\mathbf{%
f}^{\prime }(\cdot,\cdot,\cdot)\right) \right\Vert _{L^{\widetilde{\Phi }}\left( \Omega
;L_{per}^{\widetilde{\Phi }}\left( Y\times Z\right) \right) ^{d}}\leq 
\\
c\left\Vert f-f^{\prime }\right\Vert _{L^{\Phi }\left( \Omega
;L_{per}^{\Phi }\left( Y\times Z\right) \right) }^{\alpha }+c^{\prime
}\left\Vert \mathbf{f}-\mathbf{f}^{\prime }\right\Vert _{L^{\Phi }\left(
\Omega ;L_{per}^{\Phi }\left( Y\times Z\right) ^{d}\right) }^{\beta },
\end{align*}
for all $\left( f,\mathbf{f}\right) ,\left( f^{\prime },\mathbf{f}^{\prime
}\right) \in \mathcal{C}\left( \overline{\Omega };\mathcal{C}_{per}\left(
Y\times Z\right) ^{d+1}\right).$ 

\noindent Since $\mathcal{C}\left( \overline{\Omega }%
;\mathcal{C}_{per}\left( Y\times Z\right) ^{d+1}\right) $ is dense in $%
L^{\Phi }\left( \Omega ;L_{per}^{\Phi }\left( Y\times Z\right) ^{d+1}\right) 
$, the function $a$ extends by continuity to a mapping not relabeled from $L^{\Phi
}\left( \Omega ;L_{per}^{\Phi }\left( Y\times Z\right) ^{d+1}\right) $\ into 
$L^{\widetilde{\Phi }}\left( \Omega ;L_{per}^{\widetilde{\Phi }}\left(
Y\times Z\right) ^{d+1}\right) $\ such that all the inequalities in $\left(
ii\right) $ hold for every $u,v\in L^{\Phi }\left( \Omega
;L_{per}^{\Phi }\left( Y\times Z\right) \right) $ and every $\mathbf{U},%
\mathbf{V}\in L^{\Phi }\left( \Omega ;L_{per}^{\Phi }\left( Y\times Z\right)
^{d}\right) .$
\end{proof}

Thus, in the same spirit of \cite[Corollary 3.5]{Nnang Orlicz 2014}, we have the following result in the multiscale setting:
\begin{corollary}\label{cor3.2}
Under the same assumptions of Proposition \ref{prop3.3}, take $\left(
u_{\varepsilon }\right)_\varepsilon \subset L
^{\Phi }\left( \Omega \right) $ such that $u_{\varepsilon} \to u_0$
in
$L^{\Phi }\left( \Omega \right) $ as $ \varepsilon \rightarrow 0.$ Then,  for each $1\leq i\leq d$ and 
for every $\mathbf{f}\in \mathcal{C}\left( \overline{\Omega };\mathcal{C}_{per}\left( Y\times Z\right) ^{d}\right) $ we
have 
\begin{equation*}
a_{i}^{\varepsilon }\left( \cdot, \cdot,u_{\varepsilon },\mathbf{f}^{\varepsilon
}\right) \rightharpoonup a_{i}\left( \cdot, \cdot,u_{0},\mathbf{f}\right) \text{ weakly reiteratively two-scale}, \text{ in }L^{%
\widetilde{\Phi }}(\Omega), \text{as } \varepsilon \rightarrow 0.
\end{equation*}
\end{corollary}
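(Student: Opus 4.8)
The plan is to reduce the statement to the continuity/weak-convergence properties established in Proposition \ref{prop3.3} together with a density argument, exploiting the strong convergence $u_\varepsilon \to u_0$ in $L^\Phi(\Omega)$. First I would fix $1\le i\le d$, fix $\mathbf f\in \mathcal C(\overline\Omega;\mathcal C_{per}(Y\times Z)^d)$, and fix an arbitrary test function $\varphi\in L^{\Phi}(\Omega;\mathcal C_{per}(Y\times Z))$; the goal is to show
\begin{align*}
\int_\Omega a_i^\varepsilon(\cdot,\cdot,u_\varepsilon,\mathbf f^\varepsilon)\,\varphi^\varepsilon\,dx \longrightarrow \iiint_{\Omega\times Y\times Z} a_i(\cdot,\cdot,u_0,\mathbf f)\,\varphi\,dxdydz.
\end{align*}
I would split this difference into two pieces: the \emph{frozen-argument} piece
$\int_\Omega \big(a_i^\varepsilon(\cdot,\cdot,u_\varepsilon,\mathbf f^\varepsilon)-a_i^\varepsilon(\cdot,\cdot,u_0,\mathbf f^\varepsilon)\big)\varphi^\varepsilon\,dx$, which must go to zero, and the \emph{oscillatory} piece
$\int_\Omega a_i^\varepsilon(\cdot,\cdot,u_0,\mathbf f^\varepsilon)\varphi^\varepsilon\,dx$, which must converge to the stated limit.

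For the oscillatory piece, observe that $(u_0,\mathbf f)$ does not depend on $\varepsilon$: $u_0\in L^\Phi(\Omega)$ and $\mathbf f$ is admissible, so by part (i) of Proposition \ref{prop3.3} (extended by the continuity statement in part (ii) from regular $u_0$ to general $u_0\in L^\Phi(\Omega)$, via a density approximation of $u_0$ in $L^\Phi(\Omega)$) we get $a_i^\varepsilon(\cdot,\cdot,u_0,\mathbf f^\varepsilon)\rightharpoonup a_i(\cdot,\cdot,u_0,\mathbf f)$ weakly reiteratively two-scale in $L^{\widetilde\Phi}(\Omega)$. Since $\varphi\in L^\Phi(\Omega;\mathcal C_{per}(Y\times Z))$ is an admissible test function in Definition \ref{def3s} (with $B=\widetilde\Phi$, whose role as test space is $L^\Phi$), this is exactly the required convergence of the oscillatory piece. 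The only subtlety here is to justify passing from the dense class $\mathcal C(\overline\Omega;\mathcal C_{per}(Y\times Z)^{d+1})$ to $u_0\in L^\Phi$: one picks $u_0^\delta\in\mathcal C(\overline\Omega;\mathcal C_{per}(Y\times Z))$ (or just in $\mathcal C(\overline\Omega)$) with $\|u_0-u_0^\delta\|_{\Phi,\Omega}\le\delta$, uses the Hölder estimate $|\int (a_i^\varepsilon(\cdot,\cdot,u_0,\mathbf f^\varepsilon)-a_i^\varepsilon(\cdot,\cdot,u_0^\delta,\mathbf f^\varepsilon))\varphi^\varepsilon|\le 2\|a_i^\varepsilon(\cdot,\cdot,u_0,\mathbf f^\varepsilon)-a_i^\varepsilon(\cdot,\cdot,u_0^\delta,\mathbf f^\varepsilon)\|_{\widetilde\Phi,\Omega}\|\varphi^\varepsilon\|_{\Phi,\Omega}$, controls the first factor uniformly in $\varepsilon$ by $c\|u_0-u_0^\delta\|_{\Phi,\Omega}^\alpha$ via \eqref{Lest} in Proposition \ref{propalphabeta}, and controls $\|\varphi^\varepsilon\|_{\Phi,\Omega}$ via Lemma \ref{lemma2.2}; a standard $\varepsilon$/$\delta$ diagonal argument then closes it.

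For the frozen-argument piece, I would again use Hölder's inequality for Orlicz spaces (item (i) after the definition of the Luxemburg norm):
\begin{align*}
\left|\int_\Omega \big(a_i^\varepsilon(\cdot,\cdot,u_\varepsilon,\mathbf f^\varepsilon)-a_i^\varepsilon(\cdot,\cdot,u_0,\mathbf f^\varepsilon)\big)\varphi^\varepsilon\,dx\right|\le 2\,\big\|a^\varepsilon(\cdot,\cdot,u_\varepsilon,\mathbf f^\varepsilon)-a^\varepsilon(\cdot,\cdot,u_0,\mathbf f^\varepsilon)\big\|_{\widetilde\Phi,\Omega}\,\|\varphi^\varepsilon\|_{\Phi,\Omega},
\end{align*}
then bound the first factor by $c\|u_\varepsilon-u_0\|_{\Phi,\Omega}^\alpha$ using \eqref{Lest} of Proposition \ref{propalphabeta} (the $\mathbf V=\mathbf W=\mathbf f^\varepsilon$ case, so only the $v$-term survives), which tends to $0$ by hypothesis, while the second factor stays bounded, uniformly in $\varepsilon\le1$, by Lemma \ref{lemma2.2} applied to $\varphi\in\mathfrak X^\Phi_{per}$. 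Adding the two pieces gives the claim for test functions $\varphi$ in this class; since such $\varphi$ are dense in $L^\Phi(\Omega;\mathcal C_{per}(Y\times Z))$—indeed that is the class appearing in Definition \ref{def3s}—and the weak reiterated two-scale limit is uniquely determined, the convergence holds against all admissible test functions, which is precisely the assertion. The main obstacle, and the place requiring care rather than new ideas, is the interchange of the $\varepsilon\to0$ limit with the density approximation in the oscillatory term: one must make sure the estimate $\|a^\varepsilon(\cdot,\cdot,u_0,\mathbf f^\varepsilon)-a^\varepsilon(\cdot,\cdot,u_0^\delta,\mathbf f^\varepsilon)\|_{\widetilde\Phi,\Omega}\le c\|u_0-u_0^\delta\|_{\Phi,\Omega}^\alpha$ is genuinely uniform in $\varepsilon$, which is exactly what Proposition \ref{propalphabeta} provides (the constants $c,\alpha$ there do not depend on $\varepsilon$); everything else is a routine combination of Hölder's inequality, Lemma \ref{lemma2.2} and the compactness/continuity results already proven.
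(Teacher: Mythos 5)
Your proposal is correct and follows essentially the same route as the paper: the paper also fixes a test function $\varphi\in L^{\Phi}(\Omega;\mathcal C_{per}(Y\times Z))$, splits the difference into a piece where $u_\varepsilon$ is replaced by $u_0$ (controlled by $c\|u_\varepsilon-u_0\|_{\Phi,\Omega}^{\alpha}$ via the Lipschitz-type estimate of Proposition \ref{propalphabeta}/\ref{prop3.3}) and a piece handled by approximating $u_0$ with continuous functions $u_n$, applying Proposition \ref{prop3.3}(i) to each $u_n$, and closing with the same uniform-in-$\varepsilon$ estimates and a diagonal $\varepsilon$/$n$ argument. The only cosmetic difference is that the uniform bound on $\|\varphi^{\varepsilon}\|_{\Phi,\Omega}$ comes from the trace estimate \eqref{tracebounds} rather than Lemma \ref{lemma2.2}, and the final density-of-test-functions remark is unnecessary since your $\varphi$ already ranges over the full class in Definition \ref{def3s}.
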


\begin{proof}
Let $\left( u_{n}\right) _{n\in \mathbb{N}}\subset \mathcal{C}\left( \overline{\Omega }\right) $ be a sequence that
converges to $u_{0}$ in $L^{\Phi }\left( \Omega \right) $ as $n\rightarrow +\infty .$ Let $\varphi
\in L^{\Phi }\left( \Omega ;\mathcal{C}_{per}\left( Y\times Z\right) \right) 
$\ be arbitrarily fixed, let $1\leq i\leq d$, fix $\varepsilon>0,$\ and let
\begin{align}\label{Ie}
I\left( \varepsilon \right):=\int_{\Omega}a_{i}^{\varepsilon }\left( \cdot, \cdot,u_{\varepsilon },\mathbf{f}%
^{\varepsilon }\right) \varphi ^{\varepsilon }dx-\int_{\Omega
}\int_{Y}\int_{Z}a_{i}\left( \cdot, \cdot,u_{0},\mathbf{f}\right) \varphi dxdydz,
\end{align} 
the thesis will be achieved as $\lim_{\varepsilon \to 0}I(\varepsilon)=0$.
We can rewrite $I\left( \varepsilon \right) =\sum\limits_{k=1}^{4}I_{k}\left(
\varepsilon ,n\right) ,$ for any $n \in \mathbb N$, where 
\begin{align*}
I_{1}\left( \varepsilon ,n\right)
:=\int_{\Omega }\left( a_{i}^{\varepsilon }\left( \cdot, \cdot,u_{\varepsilon },%
\mathbf{f}^{\varepsilon }\right) -a_{i}^{\varepsilon }\left( \cdot, \cdot,u_{0},%
\mathbf{f}^{\varepsilon }\right) \right) \varphi ^{\varepsilon }dx,\\
I_{2}\left( \varepsilon ,n\right):=\int_{\Omega }\left( a_{i}^{\varepsilon
}\left( \cdot, \cdot,u_{0},\mathbf{f}^{\varepsilon }\right) -a_{i}^{\varepsilon
}\left( \cdot, \cdot,u_{n},\mathbf{f}^{\varepsilon }\right) \right) \varphi
^{\varepsilon }dx,\\
I_{3}\left( \varepsilon ,n\right):=\int_{\Omega }a_{i}^{\varepsilon }\left(
\cdot, \cdot,u_{n},\mathbf{f}^{\varepsilon }\right) \varphi ^{\varepsilon
}dx-\int_{\Omega }\int_{Y}\int_{Z}a_{i}\left( \cdot, \cdot,u_{n},\mathbf{f}\right)
\varphi dxdydz, \\
I_{4}\left( \varepsilon ,n\right):=\int_{\Omega }\int_{Y}\int_{Z}\left(
a_{i}\left( \cdot, \cdot,u_{n},\mathbf{f}\right) -a_{i}\left( \cdot, \cdot,u_{0},\mathbf{f}%
\right) \right) \varphi dxdydz.
\end{align*}

Applying Proposition \ref{prop3.3}, it follows that 
\begin{align}\left\vert I_{1}\left( \varepsilon
,n\right) \right\vert \leq 2c\left\Vert \varphi \right\Vert _{L^{\Phi
}\left( \Omega ;\mathcal{C}_{per}\left( Y\times Z\right) \right) }\left\Vert
u_{\varepsilon }-u_{0}\right\Vert _{\Phi ,\Omega }^{\alpha } \hbox{ for every }n \in \mathbb N, \label{I1}
\\
\left\vert I_{2}\left( \varepsilon ,n\right) \right\vert \leq 2c\left\Vert
\varphi \right\Vert _{L^{\Phi }\left( \Omega ;\mathcal{C}_{per}\left(
Y\times Z\right) \right) }\left\Vert u_{n}-u_{0}\right\Vert _{\Phi ,\Omega
}^{\alpha } \hbox{ for every } \varepsilon >0. \label{I2}
\end{align}
 Hence $
\underset{\varepsilon \rightarrow 0}{\lim }
I_{1}\left( \varepsilon ,n\right)\equiv \lim_{\varepsilon \to 0} I_1(\varepsilon)=0$ while $
 \underset{n\rightarrow +\infty }{\lim }I_2\left( \varepsilon
,n\right) =0,$
uniformly with respect to $\varepsilon$.
\noindent Since $u_{n}\in \mathcal{C}\left( \overline{\Omega }\right)
\subset \mathcal{C}\left( \overline{\Omega };\mathcal{C}_{per}\left( Y\times
Z\right) \right) $ it follows again from Proposition \ref{prop3.3} that 
\begin{align}
	\label{I3}
\underset{ \varepsilon \rightarrow 0}%
{\lim }\int_{\Omega }a_{i}^{\varepsilon }\left( \cdot, \cdot,u_{n},\mathbf{f}%
^{\varepsilon }\right) \varphi ^{\varepsilon }dx=\int_{\Omega
}\int_{Y}\int_{Z}a_{i}\left( \cdot, \cdot,u_{n},\mathbf{f}\right) \varphi dxdydz,
\end{align}
and consequently that $\underset{n\rightarrow +\infty }{\lim }\underset{\varepsilon
\rightarrow 0}{\lim }I_3\left( \varepsilon ,n\right) =0.$ 
Finally 
\begin{align}\label{I4}
\left\vert I_{4}\left( \varepsilon ,n\right) \right\vert \leq 2\left\Vert
\varphi \right\Vert _{L_{per}^{\Phi }\left( \Omega \times Y\times Z\right)
}\left\Vert u_{n}-u_{0}\right\Vert _{\Phi,\Omega}^{\alpha },
\end{align} which again entails $\underset{n\rightarrow +\infty }{\lim }\underset{\varepsilon
\rightarrow 0}{\lim }I_4\left( \varepsilon ,n\right) =0.$
Hence, given $\delta >0$, we can find from \eqref{I1}, $\varepsilon_\delta>0$ such that $|I_1(\varepsilon)|<\frac{\delta}{4}$ for every $\varepsilon \leq\varepsilon_\delta.$ 
Moreover from \eqref{I2} we have that $|I_2(\varepsilon)|<\frac{\delta}{4}$ for every $n\geq n(\delta)$ uniformly for every $\varepsilon \leq \varepsilon_\delta$.  Then, from \eqref{I3} we can take any $n >n(\delta)$ and a $\varepsilon < \varepsilon_{2,\delta}$ (with $n > n_\delta$ and $\varepsilon_{2,\delta}\leq \varepsilon_\delta$, such that $|I_3| < \frac{\delta}{4}$. Finally  \eqref{I4} guarantees that for an $n > n_\delta$ and uniformly with respect to $\varepsilon$,  $|I_4| < \frac{\delta}{4}$, which concludes the proof.
\color{black}
\end{proof}

\begin{corollary}\label{cor3.4}
Under the assumptions of Proposition \ref{prop3.3}, consider $\left(
u_{\varepsilon}\right)_\varepsilon \subset L^{\Phi }\left( \Omega \right) $ such that $u_{\varepsilon
}\rightarrow u_{0}$ in $L^{\Phi }\left( \Omega \right) $ as $\varepsilon \rightarrow 0.$ Let $\phi _{\varepsilon
}\left( x\right): =\varphi _{0}\left( x\right) +\varepsilon \varphi
_{1}\left( x,\frac{x}{\varepsilon }\right) +\varepsilon
^{2}\varphi _{2}\left( x,\frac{x}{\varepsilon },\frac{x}{\varepsilon ^{2}}\right) ,$ $x\in \Omega ,\varphi _{0}\in \mathcal{D}\left(
\Omega \right) ,\varphi _{1}\in \mathcal{D}\left( \Omega \right) \otimes 
\mathcal{C}_{per}\left( Y\right) ,\varphi _{2}\in \mathcal{D}\left( \Omega
\right) \otimes \mathcal{C}_{per}\left( Y\right) \otimes \mathcal{C}%
_{per}\left( Z\right)$ ($
\phi _{\varepsilon }=\varphi _{0}+\varepsilon \varphi _{1}^{\varepsilon
}+\varepsilon ^{2}\varphi _{2}^{\varepsilon },$  shortly).  Then, 
\begin{align*}
a_{i}^{\varepsilon }\left( \cdot, \cdot,u_{\varepsilon },D\phi _{\varepsilon }\right)
\rightharpoonup a_{i}\left( \cdot, \cdot,u_{0},D\varphi _{0}+D_{y}\varphi
_{1}+D_{z}\varphi _{2}\right) \text{ in }L^{\widetilde{\Phi }}(\Omega)\\
\text{weakly reiteratively two-scale}
\end{align*}%
as $\varepsilon \rightarrow 0$, for any $1\leq i\leq d$.
Futhermore, given a sequence $\left(v_{\varepsilon }\right)_\varepsilon \subset L^{\Phi }\left( \Omega \right) ,$ such that $v_{\varepsilon }\rightharpoonup
v_{0}$ in $L^{\Phi }\left( \Omega \right)$ weakly reiteratively two-scale as $\varepsilon
\rightarrow 0,$ one has
\begin{align*}
\underset{\varepsilon \rightarrow 0}{\lim }\int_{\Omega
}a_{i}^{\varepsilon }\left( \cdot, \cdot,u_{\varepsilon },D\phi _{\varepsilon
}\right) v_{\varepsilon }dx = \\ 
\int_{\Omega }\int_{Y}\int_{Z}a_{i}\left( \cdot, \cdot,u_{0},D\varphi
_{0}+D_{y}\varphi _{1}+D_{z}\varphi _{2}\right) v_{0}dxdydz
\end{align*}
for any $1\leq i\leq d.$%
\end{corollary}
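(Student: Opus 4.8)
The plan is to treat the two assertions in turn, the first being the workhorse for the second. For the first claim, the key observation is that $D\phi_\varepsilon = D\varphi_0 + \varepsilon D_x\varphi_1^\varepsilon + D_y\varphi_1^\varepsilon + \varepsilon^2 D_x\varphi_2^\varepsilon + \varepsilon D_y\varphi_2^\varepsilon + D_z\varphi_2^\varepsilon$, so that $D\phi_\varepsilon = (D\varphi_0 + D_y\varphi_1 + D_z\varphi_2)^\varepsilon + r_\varepsilon$ where the remainder $r_\varepsilon$ collects all the terms carrying an explicit factor of $\varepsilon$ or $\varepsilon^2$. Since $\varphi_0, \varphi_1, \varphi_2$ are (tensor products of) smooth compactly supported / periodic functions, the derivatives $D_x\varphi_1$, $D_x\varphi_2$, $D_y\varphi_2$ are themselves in the appropriate regular classes, hence their traces are bounded in $L^\infty(\Omega)$ uniformly in $\varepsilon$, and therefore $\|r_\varepsilon\|_{L^\Phi(\Omega)^d}\to 0$ as $\varepsilon\to 0$. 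Setting $\mathbf f := D\varphi_0 + D_y\varphi_1 + D_z\varphi_2 \in \mathcal C(\overline\Omega;\mathcal C_{per}(Y\times Z))^d$, I would first apply Corollary \ref{cor3.2} to obtain $a_i^\varepsilon(\cdot,\cdot,u_\varepsilon,\mathbf f^\varepsilon)\rightharpoonup a_i(\cdot,\cdot,u_0,\mathbf f)$ weakly reiteratively two-scale in $L^{\widetilde\Phi}(\Omega)$, and then absorb the discrepancy between $a_i^\varepsilon(\cdot,\cdot,u_\varepsilon,\mathbf f^\varepsilon)$ and $a_i^\varepsilon(\cdot,\cdot,u_\varepsilon,D\phi_\varepsilon)$ using the Lipschitz-type estimate \eqref{Lest} from Proposition \ref{propalphabeta}: $\|a^\varepsilon(\cdot,\cdot,u_\varepsilon,D\phi_\varepsilon)-a^\varepsilon(\cdot,\cdot,u_\varepsilon,\mathbf f^\varepsilon)\|_{\widetilde\Phi,\Omega}\le c'\|r_\varepsilon\|_{\Phi,\Omega}^\beta\to 0$. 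Testing against an arbitrary $\psi\in L^\Phi(\Omega;\mathcal C_{per}(Y\times Z))$ and using the Hölder inequality for Orlicz spaces (item (i) of the list in Subsection \ref{O-Sspaces}), the contribution of this difference vanishes in the limit, which yields the first convergence.

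For the second assertion I would combine the weak reiterated two-scale convergence just established for $a_i^\varepsilon(\cdot,\cdot,u_\varepsilon,D\phi_\varepsilon)$ with the hypothesis $v_\varepsilon\rightharpoonup v_0$ weakly reiteratively two-scale. The natural tool is a product lemma: if $g_\varepsilon\rightharpoonup g_0$ weakly reiteratively two-scale in $L^{\widetilde\Phi}(\Omega)$ and $v_\varepsilon\rightharpoonup v_0$ weakly reiteratively two-scale in $L^\Phi(\Omega)$, with one of the two limits being ``admissible'' in the sense of Proposition \ref{prop2.4} (here $g_0 = a_i(\cdot,\cdot,u_0,\mathbf f) \in \mathcal C(\overline\Omega;\mathfrak X_{per}^{\widetilde\Phi,\infty}(\mathbb R^d_y;\mathcal C_b(\mathbb R^d_z)))$, which is precisely the regularity furnished by Proposition \ref{prop3.3}(i)), then $\int_\Omega g_\varepsilon v_\varepsilon\,dx\to\iiint_{\Omega\times Y\times Z}g_0 v_0\,dxdydz$. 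To prove this one approximates $g_0$ by a function $f\in \mathcal C(\overline\Omega)\otimes\mathfrak X_{per}^{\widetilde\Phi,\infty}(\mathbb R^d_y;\mathcal C_b(\mathbb R^d_z))$ in the $L^{\widetilde\Phi}(\Omega\times Y\times Z)$-norm, writes $g_\varepsilon = f^\varepsilon + (g_\varepsilon - f^\varepsilon)$, uses Proposition \ref{prop2.4} on $\int_\Omega f^\varepsilon v_\varepsilon\,dx$ and the Hölder inequality together with the uniform bound on $\|v_\varepsilon\|_{\Phi,\Omega}$ (which follows from weak reiterated two-scale convergence) plus Lemma \ref{lemma2.2}/Proposition \ref{prop2.1} to control the error term $\int_\Omega(g_\varepsilon-f^\varepsilon)v_\varepsilon\,dx$. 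Alternatively, and more cheaply, since the error $r_\varepsilon$ in $D\phi_\varepsilon$ already goes to zero strongly, one reduces directly to testing $v_\varepsilon$ against the fixed regular function $a_i(\cdot,\cdot,u_0,\mathbf f)$ composed with the oscillating argument, for which the convergence is exactly the defining relation \eqref{2} extended via Proposition \ref{prop2.4}.

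The main obstacle I anticipate is the passage from ``$g_\varepsilon$ converges weakly reiteratively two-scale and $v_\varepsilon$ converges weakly reiteratively two-scale'' to convergence of the \emph{product} $\int_\Omega g_\varepsilon v_\varepsilon\,dx$: weak-times-weak convergence is not automatic, and one genuinely needs the extra regularity of the limit $g_0$ (namely membership in the space $\mathcal C(\overline\Omega;\mathfrak X_{per}^{\widetilde\Phi,\infty})$, so that its oscillating trace can serve as a legitimate test function and the approximation argument closes). This is why the chain Proposition \ref{prop3.3}(i) $\to$ Corollary \ref{cor3.2} $\to$ first part of the present corollary must be set up carefully: it is precisely this structural regularity of $a$, inherited from $(H_1)$–$(H_5)$, that makes the product pass to the limit. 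The remaining steps — splitting $D\phi_\varepsilon$, estimating $r_\varepsilon$, applying \eqref{Lest} and the Orlicz Hölder inequality — are routine once this point is in place, and I would carry them out in the order: (1) algebraic decomposition of $D\phi_\varepsilon$ and $L^\Phi$-smallness of $r_\varepsilon$; (2) Corollary \ref{cor3.2} applied with $\mathbf f = D\varphi_0+D_y\varphi_1+D_z\varphi_2$; (3) absorption of the $D\phi_\varepsilon$-versus-$\mathbf f^\varepsilon$ discrepancy via Proposition \ref{propalphabeta}; (4) the product-passage-to-the-limit lemma for the second assertion.
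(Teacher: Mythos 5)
Your treatment of the first assertion coincides with the paper's proof: the same decomposition $D\phi_\varepsilon=\mathbf{f}^\varepsilon+r_\varepsilon$ with $\|r_\varepsilon\|_{\Phi,\Omega}\to 0$, Corollary \ref{cor3.2} applied to $\mathbf{f}=D\varphi_0+D_y\varphi_1+D_z\varphi_2$, and absorption of the discrepancy via the Lipschitz-type estimate of Proposition \ref{propalphabeta}.

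For the second assertion, however, the ``product lemma'' as you formulate it does not close. Approximating only the \emph{limit} $g_0$ by a regular $f$ in $L^{\widetilde{\Phi}}(\Omega\times Y\times Z)$ gives no control on $\|g_\varepsilon-f^\varepsilon\|_{\widetilde{\Phi},\Omega}$: weak reiterated two-scale convergence of $g_\varepsilon$ carries no norm information, and Lemma \ref{lemma2.2}/Proposition \ref{prop2.1} bound the trace of a \emph{fixed} periodic function by its norm on the period --- they say nothing about the $\varepsilon$-dependent difference $g_\varepsilon-f^\varepsilon$, since $g_\varepsilon=a_i^\varepsilon(\cdot,\cdot,u_\varepsilon,D\phi_\varepsilon)$ is not the trace of a fixed function. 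Moreover, $g_0=a_i(\cdot,\cdot,u_0,\mathbf{f})$ does \emph{not} have the regularity $\mathcal{C}\bigl(\overline{\Omega};\mathfrak{X}_{per}^{\widetilde{\Phi},\infty}(\mathbb{R}^d_y;\mathcal{C}_b(\mathbb{R}^d_z))\bigr)$ you attribute to it via Proposition \ref{prop3.3}(i): that statement requires the arguments of $a_i$ to be continuous, whereas $u_0$ is merely in $L^{\Phi}(\Omega)$. The paper repairs both points simultaneously by approximating $u_0$ with $u_n\in\mathcal{C}(\overline{\Omega})$ strongly in $L^{\Phi}(\Omega)$ and splitting the relevant quantity into five terms: the $\varepsilon$-level estimate \eqref{Lest} of Proposition \ref{propalphabeta}, combined with $\|r_\varepsilon\|_{\Phi,\Omega}\to0$, the strong convergence $u_\varepsilon\to u_0$ and the uniform bound on $\|v_\varepsilon\|_{\Phi,\Omega}$, shows that $g_\varepsilon$ is strongly $L^{\widetilde{\Phi}}(\Omega)$-close to the admissible trace $a_i^\varepsilon(\cdot,\cdot,u_n,\mathbf{f}^\varepsilon)$ of $a_i(\cdot,\cdot,u_n,\mathbf{f})\in\mathcal{C}\bigl(\overline{\Omega};\mathfrak{X}_{per}^{\widetilde{\Phi},\infty}(\mathbb{R}^d_y;\mathcal{C}_b(\mathbb{R}^d_z))\bigr)$, to which Proposition \ref{prop2.4} applies for the pairing with $v_\varepsilon$; the remaining errors are then dispatched by a diagonal argument in $(n,\varepsilon)$. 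Your ingredients (the Lipschitz estimate, the strong convergence of $u_\varepsilon$, Proposition \ref{prop2.4}) are the right ones, but they must be assembled at the level of the sequence $g_\varepsilon$ itself rather than of its two-scale limit.
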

\begin{proof}
Given $\varphi \in L^{\Phi }\left( \Omega ;\mathcal{C}_{per}\left( Y\times
Z\right) \right) $ and $1\leq i\leq d,$ let 
\begin{equation*}
I^{\prime }\left( \varepsilon \right) :=\int_{\Omega }a_{i}^{\varepsilon
}\left( \cdot, \cdot,u_{\varepsilon },D\phi _{\varepsilon }\right) \varphi
^{\varepsilon }dx- \\ 
\int_{\Omega }\int_{Y}\int_{Z}a_{i}\left( \cdot, \cdot,u_{0},D\varphi
_{0}+D_{y}\varphi _{1}+D_{z}\varphi _{2}\right) \varphi dxdydz.
\end{equation*}
We can rewrite $I^{\prime }\left( \varepsilon \right) :=I\left( \varepsilon \right)
+J\left( \varepsilon \right),$ with 
\begin{equation*}
	J\left( \varepsilon \right)
:=\int_{\Omega }\left( a_{i}^{\varepsilon }\left( \cdot, \cdot,u_{\varepsilon },D\phi
_{\varepsilon }\right) -a_{i}^{\varepsilon }\left( \cdot, \cdot,u_{\varepsilon
},D\varphi _{0}+\left( D_{y}\varphi _{1}\right) ^{\varepsilon }+\left(
D_{z}\varphi _{2}\right) ^{\varepsilon }\right) \right) \varphi
^{\varepsilon }dx,
\end{equation*}
and $I(\varepsilon)$  as in \eqref{Ie} with $\mathbf{f}%
^{\varepsilon }$ replaced by $D\varphi _{0}+\left( D_{y}\varphi _{1}\right) ^{\varepsilon }+\left(
D_{z}\varphi _{2}\right) ^{\varepsilon }$. 

\noindent Thus, applying Corollary \ref{cor3.2}, $I\left( \varepsilon \right)
\rightarrow 0$ as $\varepsilon \rightarrow 0.$ 

\noindent Since $%
D\phi _{\varepsilon }\left( x\right) =D\varphi _{0}\left( x\right)
+D_{y}\varphi _{1}\left( x,\frac{x}{\varepsilon }\right) +D_{z}\varphi
_{2}\left( x,\frac{x}{\varepsilon },\frac{x}{\varepsilon ^{2}}\right)
+\varepsilon D_{x}\varphi _{1}\left( x,\frac{x}{\varepsilon }\right) $
$+\varepsilon D_{y}\varphi _{2}\left( x,\frac{x}{\varepsilon },\frac{x}{%
\varepsilon ^{2}}\right)$ $+\varepsilon ^{2}D_{x}\varphi _{2}\left( x,\frac{x}{%
\varepsilon },\frac{x}{\varepsilon ^{2}}\right),$ by condition $i)$ in Section \ref{O-Sspaces}
and Proposition \ref{propalphabeta}, we deduce  that \begin{equation*}\left\vert
J\left( \varepsilon \right) \right\vert \leq 2 c\left\Vert
\varphi \right\Vert _{L^{\Phi }\left( \Omega ;\mathcal{C}_{per}\left(
Y\times Z\right) \right) }
\left\Vert \varepsilon D_{x}\varphi
_{1}^{\varepsilon }+\varepsilon D_{y}\varphi _{2}^{\varepsilon }+\varepsilon
^{2}D_{x}\varphi _{2}^{\varepsilon }\right\Vert _{\Phi ,\Omega }^{\beta }
\end{equation*}

\noindent and $\left\vert J\left( \varepsilon \right) \right\vert \rightarrow 0$ as $%
\varepsilon \rightarrow 0,$ and this proves the first statement. The second one is obtained via similar
arguments to those in Corollary \ref{cor3.2}, exploiting the fact that there exists a  strong convergent sequence $(u_n) \subset C(\overline \Omega;C_{\rm per}(Y\times Z))$ to $u_0 \in L^\Phi(\Omega)$.
 
 Indeed one can define 
 \begin{align*}
 	I''(\varepsilon):=
\int_{\Omega
}a_{i}^{\varepsilon }\left( \cdot, \cdot,u_{\varepsilon },D\phi _{\varepsilon
}\right) v_{\varepsilon }dx -\\ 
\int_{\Omega }\int_{Y}\int_{Z}a_{i}\left( \cdot, \cdot,u_{0},D\varphi
_{0}+D_{y}\varphi _{1}+D_{z}\varphi _{2}\right) v_{0}dxdydz, 	
 \end{align*}
thus the proof will be concluded if we show that $\lim_{\varepsilon \to 0}I''(\varepsilon)=0$.
To this end we can rewrite $I''\left( \varepsilon \right) =\sum\limits_{k=1}^{5}I_{k}\left(
\varepsilon ,n\right) ,$ for any $n \in \mathbb N$, where 
\begin{align*}
	I_1(\varepsilon,n)(\equiv I_1(\varepsilon)):=\int_{\Omega
	}a_{i}^{\varepsilon }\left( \cdot, \cdot,u_{\varepsilon },D\phi _{\varepsilon
	}\right) v_{\varepsilon }dx - \\ 
\int_{\Omega
}a_{i}^{\varepsilon }\left( \cdot, \cdot,u_{\varepsilon },D\varphi_0+ (D_y\varphi_1)^\varepsilon+(D_z \varphi_2)^\varepsilon\right) v_{\varepsilon }dx
\\
I_2(\varepsilon,n)(\equiv I_2(\varepsilon)):=\int_{\Omega
}\left(a_{i}^{\varepsilon }\left( \cdot, \cdot,u_{\varepsilon },D\varphi_0+ (D_y\varphi_1)^\varepsilon+(D_z \varphi_2)^\varepsilon\right) \right. - \\ 
\left.a_{i}^{\varepsilon }\left( \cdot, \cdot,u_0,D\varphi_0+ (D_y\varphi_1)^\varepsilon+(D_z \varphi_2)^\varepsilon\right) v_{\varepsilon }\right)dx\\
I_3(\varepsilon, n):=\int_{\Omega
}a_{i}^{\varepsilon }\left( \cdot, \cdot,u_0,D\varphi_0+ (D_y\varphi_1)^\varepsilon+(D_z \varphi_2)^\varepsilon\right) v_{\varepsilon }dx-
\\
 \int_{\Omega
}a_{i}^{\varepsilon }\left( \cdot, \cdot,u_n,D\varphi_0+ (D_y\varphi_1)^\varepsilon+(D_z \varphi_2)^\varepsilon\right) v_{\varepsilon }dx\\
I_4(\varepsilon,n):=\int_{\Omega
}a_{i}^{\varepsilon }\left( \cdot, \cdot,u_n,D\varphi_0+ (D_y\varphi_1)^\varepsilon+(D_z \varphi_2)^\varepsilon\right) v_{\varepsilon }dx - \\
\int_{\Omega
}a_{i}\left( \cdot, \cdot,u_n,D\varphi_0+ (D_y\varphi_1)^\varepsilon+(D_z \varphi_2)^\varepsilon\right) v_0dx
\\
I_5(\varepsilon,n):=\int_{\Omega
}a_{i}\left( \cdot, \cdot,u_n,D\varphi_0+ D_y\varphi_1+D_z \varphi_2\right) v_0 dx-
\\
\int_{\Omega}a_{i}\left( \cdot, \cdot,u_0,D\varphi_0+ D_y\varphi_1+D_z \varphi_2\right) v_0dx.
\end{align*}
These integrals can be estimated as in Proposition \ref{prop3.3}, i.e. as in \cite{nnang reit}
\begin{align*}
	\lim_{\varepsilon \to 0}|I_1(\varepsilon)|=0, \hbox{uniformly with respect to } n\\
	\lim_{\varepsilon \to 0} |I_2(\varepsilon, n)|=
	\lim_{n\to +\infty}\lim_{\varepsilon \to 0}|I_2(\varepsilon)|=0\\
	\lim_{n\to +\infty}|I_3(\varepsilon,n )|=0 \hbox{ uniformly with respect to }\varepsilon,\\
\lim_{n\to +\infty}\lim_{\varepsilon \to 0}|I_4(\varepsilon, n)|=0 \hbox{ (as in the proof of \eqref{I3} above)}
\\
|I_5(\varepsilon, n)|\to 0 \hbox{ as } n\to +\infty, \hbox{uniformly with respect to } \varepsilon.
\end{align*}
These estimates conclude the proof.
\end{proof}

\subsection{Homogenization Result}\label{sub32}
 Following the notation in section \ref{O-Sspaces}, we set  $$\mathbb{F}_{0}^{1,\Phi }:=W_{0}^{1}L^{\Phi}\left( \Omega
\right) \times L^\Phi_{per}\left( \Omega ;W_{\#}^{1}L^{\Phi}\left( Y
\right) \right) \times L^\Phi\left( \Omega ;L_{per}^{\Phi}\left(
Y;W_{\#}^{1}L^{\Phi}\left( Z
\right) \right) \right) $$ and $$F^{\infty }:=\mathcal{D}\left( \Omega \right)
\times \left[ \mathcal{D}\left( \Omega \right) \otimes \mathcal{C}%
_{per}^{\infty }\left( Y
\right)\right] \times \left[ \mathcal{D}\left( \Omega \right) \otimes
\mathcal{C}_{per}^{\infty }(Y) \otimes \mathcal{C}_{per}^{\infty }\left( Z\right)\right].
$$
We endow $\mathbb{F}_{0}^{1,\Phi }$ by the norm 
\begin{equation*}
\left\Vert u\right\Vert _{\mathbb{F}_{0}^{1,\Phi }}:=\left\Vert
D_{x}u_{0}\right\Vert _{\Phi ,\Omega }+\left\Vert D_{y}u_{1}\right\Vert
_{\Phi ,\Omega \times Y}+\left\Vert D_{z}u_{2}\right\Vert _{\Phi ,\Omega
\times Y\times Z},
\end{equation*}
with $u=\left( u_{0},u_{1},u_{2}\right) \in \mathbb{F}%
_{0}^{1,\Phi }$. With this norm,\ $\mathbb{F}_{0}^{1,\Phi }$ is a Banach space. Moreover $%
F^{\infty }$\ is dense in $\mathbb{F}_{0}^{1,\Phi }$, where, in terms of notation, if  
\begin{align}
\left( \psi _{0},\psi _{1},\psi _{2}\right) \in
\mathcal{D}\left( \Omega \right) \times \left[ \mathcal{D}\left( \Omega \right) \otimes 
\mathcal{C}_{per}^{\infty }\left( Y\right) \right] \times \left[ \mathcal{D}\left( \Omega \right) \otimes 
\mathcal{C}_{per}^{\infty }\left( Y\right) \otimes \mathcal{C}_{per}^{\infty }\left( Z\right) \right], \label{3.57}
\end{align}
then $F^{\infty }=\left\{ \phi ,\phi :=\left( \psi _{0},\psi _{1},\psi
_{2}\right) \text{ in \eqref{3.57}}\right\}.$

\smallskip 
We are in position of proving the existence Theorem \ref{mainresult}, which we restate for the reader's convenience.

\medskip
{\bf Theorem \ref{mainresult}}
{\it 	Let \eqref{1.1} be the problem defined in Section \ref{sec1}, with $a$ and $f$ satisfying $(H_1)-(H_5)$. For each $\varepsilon
	>0$, let $u_{\varepsilon }$ be a solution of \eqref{1.1}. Then there exists a not relabeled subsequence and Then there exists a not relabeled subsequence  and $u:=\left( u_{0},u_{1},u_{2}\right)\in \mathbb{F}_{0}^{1,\Phi }:=W_{0}^{1}L^{\Phi}\left( \Omega
	\right) \times L^\Phi_{per}\left( \Omega ;W_{\#}^{1}L^{\Phi}\left( Y
	\right) \right) \times L^\Phi\left( \Omega ;L_{per}^{\Phi}\left(
	Y;W_{\#}^{1}L^{\Phi}\left( Z
	\right) \right) \right)$ such that \eqref{3.58}, \eqref{3.59} and \eqref{3.60} hold, namely
\begin{align*}
u_{\varepsilon }\rightharpoonup u_{0}\text{ in }W_{0}^{1}L^{\Phi }\left( \Omega
\right) -\text{weakly, \; and}
\\
D_{x_{i}}u_{\varepsilon }\rightharpoonup
D_{x_{i}}u_{0}+D_{y_{i}}u_{1}+D_{z_{i}}u_{2} \nonumber \\
\text{ weakly reiteratively two-scale in }L^{\Phi }(\Omega), 1\leq i\leq N.
\end{align*}
Moreover  the function $u:=\left( u_{0},u_{1},u_{2}\right)$ 
is a solution of
\begin{equation*}
\left\{ 
\begin{tabular}{l}
$\int_{\Omega }\int_{Y}\int_{Z}a\left(
y, z,u_{0},Du_{0}+D_{y}u_{1}+D_{z}u_{2}\right)\cdot \left(
Dv_{0}+D_{y}v_{1}+D_{z}v_{2}\right) dxdydz$ \\
\\ 
$=\int_{\Omega }fv_{0}dx,$ for all $v=\left( v_{0},v_{1},v_{2}\right) \in 
\mathbb{F}_{0}^{1,\Phi }$. \\ 
\end{tabular}%
\right. 
\end{equation*}
}
\color{black}
\begin{proof}
Observing that $\underset{0<\varepsilon \leq 1}{\sup }\left\Vert u_{\varepsilon
}\right\Vert _{W_{0}^{1}L^{\Phi }\left( \Omega
\right) }<\infty ,$ one can extract a not relabeled
subsequence such that \eqref{3.58} and \eqref{3.59} hold.
We show that $u:=\left( u_{0},u_{1},u_{2}\right) $ defined by \eqref{3.58} and 
\eqref{3.59} is a solution of \eqref{3.60}. 

To this end, for  $\phi :=\left( \psi
_{0},\psi _{1},\psi _{2}\right) \in F_{0}^{\infty },$ consider $\phi
_{\varepsilon }\left( x\right) =\psi _{0}\left( x\right) +\varepsilon \psi
_{1}\left( x,\frac{x}{\varepsilon }\right) +\varepsilon
^{2}\psi _{2}\left( x,\frac{x}{\varepsilon },\frac{x}{%
\varepsilon ^{2}}\right) $. We have 
\begin{align*}
\int_{\Omega }a^{\varepsilon }\left(
\cdot, \cdot,u_{\varepsilon },Du_{\varepsilon }\right) \cdot Du_{\varepsilon
}dx=\int_{\Omega }fu_{\varepsilon }dx;
\\
\int_{\Omega }a^{\varepsilon }\left( \cdot, \cdot,u_{\varepsilon },Du_{\varepsilon
}\right)  \cdot D\phi _{\varepsilon }dx=\int_{\Omega }f\phi _{\varepsilon }dx.
\end{align*}
Thus, by $(H_4)$,
\begin{align*}0\leq \int_{\Omega }\left( a^{\varepsilon }\left( \cdot, \cdot,u_{\varepsilon
},Du_{\varepsilon }\right) -a^{\varepsilon }\left( \cdot, \cdot,u_{\varepsilon
},D\phi _{\varepsilon }\right) \right) \cdot\left( Du_{\varepsilon }-D\phi
_{\varepsilon }\right) dx=\int_{\Omega }f\left( u_{\varepsilon }-\phi
_{\varepsilon }\right) dx
\\
-\int_{\Omega }a^{\varepsilon }\left( \cdot, \cdot, u_{\varepsilon}, D\phi
_{\varepsilon }\right) \cdot\left( Du_{\varepsilon }-D\phi _{\varepsilon
}\right) dx,
\end{align*}
with $f\in L^{\widetilde{\Phi }}\left( \Omega \right) \subset L^{%
\widetilde{\Phi }}\left( \Omega ;\mathcal{C}_{per}\left( Y\times Z\right)
\right)$ and
$\phi _{\varepsilon }\rightarrow \psi _{0}$ in $L^{\Phi }\left( \Omega
\right) $ (and so the convergence to $\psi_0$ is also in the strong reiterated two-scale convergence setting). 
Moreover, since $u_{\varepsilon }\rightharpoonup u_{0}$ in $W_{0}^{1}L^{\Phi }\left( \Omega \right)$, and the embedding $W_{0}^{1}L^{\Phi }\left( \Omega
\right) \hookrightarrow L^{\Phi }\left( \Omega\right) $ is compact, then $u_{\varepsilon }\rightarrow u_{0}$ in $L^{\Phi
}\left( \Omega\right) $ strongly and in the reiterated two-scale convergence setting. 

Therefore, 
\begin{align*}
\int_{\Omega }f\left( u_{\varepsilon }-\phi _{\varepsilon }\right)
dx\rightarrow \int_{\Omega }\int_{Y}\int_{Z}f\left( u_{0}-\psi _{0}\right)
dxdydz=\int_{\Omega }f\left( u_{0}-\psi _{0}\right) dx.
\end{align*}

By Corollary \ref{cor3.4}, it results that:
\begin{align*}
\int_{\Omega }a^{\varepsilon }\left( \cdot, \cdot, u
_{\varepsilon} ,D\phi _{\varepsilon }\right) \cdot \left( Du_{\varepsilon }-D\phi
_{\varepsilon }\right) dx\rightarrow\\
\int_{\Omega }\int_{Y}\int_{Z}a\left( y, z, u_{0},D\psi _{0}+D_{y}\psi
_{1}+D_{z}\psi _{2}\right)\cdot\\
\left( D\left( u_{0}-\psi _{0}\right) +D_{y}\left( u_{1}-\psi _{1}\right)
+D_{z}\left( u_{2}-\psi _{2}\right) \right) dxdydz,
\end{align*}
as $\varepsilon \to 0$.
Thus 
\begin{align*}
0\leq \int_{\Omega
}\int_{Y}\int_{Z}f\left( u_{0}-\psi_{0}\right) dxdydz-\int_{\Omega
}\int_{Y}\int_{Z}a\left( y, z, u_{0},D\psi _{0}+D_{y}\psi _{1}+D_{z}\psi
_{2}\right)\cdot
\\
\left( D\left( u_{0}-\psi _{0}\right) +D_{y}\left( u_{1}-\psi _{1}\right)
+D_{z}\left( u_{2}-\psi _{2}\right) \right) dxdydz.
\end{align*} 
Using density of $
F_{0}^{\infty }$ in $\mathbb{F}_{0}^{1,\Phi }$  the result remains valid
for all $\phi \in \mathbb{F}_{0}^{1,\Phi }$. 
In particular, choosing $\phi
:=u-tv,v=\left( v_{0},v_{1},v_{2}\right) \in \mathbb{F}_{0}^{1,\Phi }$, and dividing
 by $t>0$ we get:
\begin{align*}
0\leq \int_{\Omega }fv_{0}dx-\int_{\Omega }\int_{Y}\int_{Z}a\left(
y, z,u_{0},\mathbb{D}u-t\mathbb{D}v\right) \cdot \mathbb{D}vdxdydz,
\end{align*} where 
$\mathbb{D}v=Dv_{0}+D_{y}v_{1}+D_{z}v_{2}.$ 

Using the continuity of $a$ in its
 last argument and passing to the limit as $t\rightarrow 0,$ we are led
to 
\begin{align*}0\leq \int_{\Omega }fv_{0}dx-\int_{\Omega }\int_{Y}\int_{Z}a\left(
y, z,u_{0},\mathbb{D}u\right) \cdot \mathbb{D}vdxdydz,
\end{align*} that is 
for all $v\in \mathbb{F}_{0}^{1,\Phi }.$
Thus, if we replace $%
v=\left( v_{0},v_{1},v_{2}\right) \in \mathbb{F}_{0}^{1,\Phi }$ by $%
v^{1}=\left( -v_{0},-v_{1},-v_{2}\right) $ we deduce:
\begin{align*}
-\int_{\Omega }fv_{0}dx\geq \int_{\Omega }\int_{Y}\int_{Z}a\left(
y, z,u_{0},\mathbb{D}u\right) \cdot\mathbb{D}v^1dxdydz, i.e.;
\\
-\int_{\Omega }fv_{0}dx\geq -\int_{\Omega }\int_{Y}\int_{Z}a\left(
y, z,u_{0},\mathbb{D}u\right) \cdot\mathbb{D}vdxdydz
\end{align*}
thus the equality follows, i.e.
 $u=\left( u_{0},u_{1},u_{2}\right) $ verifies \eqref{3.60}. 
 \end{proof}

\begin{remark}\label{remuniq}
Assuming $(H_6)$ (which is a strict monotonicity assumption on $a$ and not very restrictive, for instance, in the case of $a$ not dependent on the third variable) one can
prove that \eqref{3.60} has a unique solution. Take $w=\left(
w_{0},w_{1},w_{2}\right) $ another solution of \eqref{3.60}, then
\begin{align*}
\int_{\Omega }fw_{0}dx=\int_{\Omega }\int_{Y}\int_{Z}a\left( y, z,u_{0},%
\mathbb{D}u\right) \cdot \mathbb{D}wdxdydz,
\\
\int_{\Omega }fu_{0}dx=\int_{\Omega }\int_{Y}\int_{Z}a\left(y, z,w_{0},%
\mathbb{D}w\right) \cdot \mathbb{D}udxdydz,
\\
-\int_{\Omega }fu_{0}dx=-\int_{\Omega }\int_{Y}\int_{Z}a\left(y, z,u_{0},%
\mathbb{D}u\right) \cdot \mathbb{D}udxdydz,
\\
-\int_{\Omega }fw_{0}dx=-\int_{\Omega }\int_{Y}\int_{Z}a\left(y, z,w_{0},%
\mathbb{D}w\right) \cdot \mathbb{D}wdxdydz,
\end{align*}
Consequently, by $(H_6)$
\begin{align*}
 0=\int_{\Omega}\int_{Y}\int_{Z}\left( a\left(y, z,w_{0},\mathbb{D}w\right) -a\left(
y, z,u_{0},\mathbb{D}u\right) \right) .\left( \mathbb{D}w-\mathbb{D}u\right)
dxdydz\geq \\
\gamma \int_{\Omega }\int_{Y}\int_{Z}\Phi \left( \left\vert \mathbb{D}w-
\mathbb{D}u\right\vert \right) dxdydz.
\end{align*}
Then by \eqref{1.2}, we can apply \cite[Lemma 2]{kenne Nnang}, in turn relying on \cite[Proposition 2.1]{MR}, which entail that the norm in $L^\Phi(\Omega \times Y \times Z)$ of $\mathbb D w- \mathbb D u$ is null. 
Hence, standard derivation and integration on $Z$ and $Y$, together with the fact that $u,w \in \mathbb F^{1, \phi}_0 $ guarantee that  $w=u$. We omit the details referring to \cite{nnang reit} where the same result is obtained in standard Sobolev setting.
\end{remark}

It is worth to point out that equation  \eqref{3.60} is referred as global
homogenization problem for \eqref{1.1} and is equivalent to the
following three systems (cf. also \cite[Theorem 2.11]{All2} and \cite{LNW} for the classical Sobolev setting and \cite{NN}):
\begin{align}
\int_{\Omega }\int_{Y}\int_{Z}a\left(
y, z,u_{0},Du_{0}+D_{y}u_{1}+D_{z}u_{2}\right) \cdot D_{z}v_{2}dxdydz=0 \nonumber \\ 
\hbox{for all }v_{2}\in L^\Phi\left( \Omega ;L_{per}^\Phi\left(
Y;W_{\#}^{1}L^\Phi\left( Z\right) \right) \right),
 \label{3.64}
\\
\int_{\Omega }\int_{Y}\left( \int_{Z}a\left(
y, z,u_{0},Du_{0}+D_{y}u_{1}+D_{z}u_{2}\right) dz\right) \cdot D_{y}v_{1}dxdy=0 \nonumber \\ 
\hbox{ for all }v_{1}\in L^\Phi_{\rm per}\left( \Omega ;W_{\#}^{1}L^\Phi\left( Y\right) \right), 
\label{3.65}
\\
\int_{\Omega }\left( \int_{Y}\int_{Z}a\left(y, z,u_{0},Du_{0}+D_{y}u_{1}+D_{z}u_{2}\right) dydz\right)\cdot
Dv_{0} dx \nonumber \\ 
=\int_{\Omega }fv_{0}dx \hbox{ for all }v_{0}\in W_{0}^{1}L^{\Phi }\left( \Omega
\right).
 \label{3.66}
\end{align}

Now we are in position to derive a macroscopic homogenized problem, as in \cite[Corollary 2.12]{All2}. Hence, let  $r \in \mathbb R$ and $\xi \in
\mathbb{R}
^{d}$ be arbitrarily fixed. For a.e.  $y \in Y$, consider the variational cell problem in \eqref{3.67}, whose solution is denoted by $\pi_2(y, r,\xi)$, i.e.
\begin{equation}\label{1.12}
\left\{ 
\begin{tabular}{l}
$\hbox{find } \pi _{2}\left(y,r, \xi \right) \in W_{\#}^{1}L^\Phi\left( Z\right) $ \hbox{such that} \\ 
$\int_{Z}a\left( y,z,r,\xi +D_{z}\pi _{2}\left( y,r,\xi \right) \right)
\cdot D_{z}\theta dz=0$ for all $\theta \in W_{\#}^{1}L^\Phi\left( Z\right).$%
\end{tabular}%
\right. 
\end{equation}%
\color{black}
This problem has a solution (arguing as in \cite[Theorem 3.2]{Y}), unique under assumption $(H_6)$,  as in the classical case (see \cite[page 18]{NN} which in turn relies on \cite{HS}).

\noindent
Comparing \eqref{3.66} with (\ref{1.12}) for  $r= u_0(x)$ and $\xi =Du_{0}\left( x\right)
+D_{y}u_{1}\left( x,y\right) $ we can consider 
$$
\Omega \times 
\mathbb{R}
_{y}^{d} \ni \left( x,y\right) \rightarrow \pi _{2}\left(y, u_0(x), Du_{0}\left( x\right)
+D_{y}u_{1}\left( x,y\right) \right)  \in W_{\#}^{1}L^\Phi\left( Z
\right), $$
Hence, defining  for a.e. $y \in Y$, and for any $(r, \xi) \in \mathbb R \times \mathbb R^d$, $h$ as in \eqref{h}, namely  
\begin{equation*}h\left(y,r, \xi \right) :=\int_{Z}a_{i}\left( y,z,r ,\xi
+D_{z}\pi _{2}\left(y,r, \xi \right) \right) dz,\end{equation*} \eqref{3.65} becomes \begin{align*}
\int_{\Omega }\int_{Y}h\left(y, u_0(x), Du_{0}(x)+D_{y}u_{1}(x,y)\right) \cdot D_{y}v_{1}dxdy=0, \end{align*}
 for all  $v_{1}\in L^{\phi}_{per}\left( \Omega ;W_{\#}^{1}L^\Phi\left( Y
\right) \right).$

\noindent Consequently, in analogy with the previous steps, one can consider, for any $(r,\xi) \in \mathbb R\times
\mathbb{R}
^{d},$ the function $\pi _{1}\left(r, \xi \right) \in
W_{\#}^{1}L^\Phi\left( Y
\right) $ solution of the variational problem in \eqref{3.69b},  (unique if $(H_6)$ exists)i.e.
\begin{equation*}
\left\{ 
\begin{tabular}{l}
\hbox{ find} $\pi _{1}\left(r, \xi \right) \in W_{\#}^{1}L^{\Phi}\left( Y
\right) $ \hbox{ such that } \\ 
$\int_{Y}h\left(y, r, \xi +D_{y}\pi _{1}\left(r, \xi \right) \right) \cdot D_{y}\theta
dy=0$ for all $\theta \in W_{\#}^{1}L^{\Phi}\left( Y
\right). $%
\end{tabular}%
\right.  \label{3.69}
\end{equation*}

Note also that (\ref{3.65}) lead us to $u_{1}=\pi _{1}\left(u_0,
Du_{0}\right) .$ Set, again, for $(r,\xi) \in \mathbb R\times \mathbb R^d$
\begin{align*}
q\left( r,\xi \right) =\int_{Y}h\left(y,r, \xi +D_{y}\pi
_{1}\left( r,\xi \right) \right) dy,
\end{align*}
the function $q$ in \eqref{q} is well defined.
 Moreover, it results from \eqref{3.66} and the above cell problems that 
\begin{align}
\int_{\Omega }q\left( u_0, Du_{0}\right) \cdot Dv_{0}dx=\int_{\Omega }f\cdot v_{0}dx\text{
for all }v_{0}\in W_{0}^{1}L^{\Phi }\left( \Omega \right) .  \label{3.71}
\end{align}

 Finally, analogously to the already mentioned standard $H^1$ setting in \cite{All2}, we have the following result, linking the macroscopic problem to the single iterated ones.

 {\bf Theorem \ref{maincor}} {\it For every $\varepsilon >0$, let \eqref{1.1} be  such that $a$ and $f$ satisfy $(H_1)-(H_6)$. 

Let $u_0 \in W_{0}^{1}L^{\Phi }(\Omega)$ be the solution  defined by means of \eqref{3.60}. Then, it is the unique solution of the
macroscopic homogenized problem 
\begin{equation}
-{\rm div} q\left( u_0, Du_{0}\right) =f\text{ in }\Omega ,u_{0}\in W_{0}^{1}L^{\Phi
}\left( \Omega \right),  \label{3.72}
\end{equation}
where $q$ is defined by \eqref{q}, taking into account \eqref{h}, \eqref{3.67} and \eqref{3.69b}.}
\begin{proof}
From \eqref{3.71}, the function $u_{0}$ is a solution of \eqref{3.72}. Let $w_{0}\in W_{0}^{1}L^{\Phi }\left( \Omega \right) $ be another
solution of \eqref{3.72}, then we have
\begin{align*}-\int_{\Omega }q\left(u_0,
Du_{0}\right) \cdot Dw_{0}dx=-\int_{\Omega }f\cdot w_{0}dx;\\
\int_{\Omega }q\left(u_0,
Du_{0}\right) \cdot Du_{0}dx=\int_{\Omega }f\cdot u_{0}dx;\\
\int_{\Omega }q\left(w_0, Dw_{0}\right) \cdot Dw_{0}dx=\int_{\Omega
}f\cdot w_{0}dx;
\\-\int_{\Omega }q\left(w_0, Dw_{0}\right) \cdot Du_{0}dx=-\int_{\Omega
}f\cdot u_{0}dx.
\end{align*} 
Thus 
\begin{align*}
\int_{\Omega }\left( q\left(u_0, Du_{0}\right) -q\left(w_0,
Dw_{0}\right) \right) \cdot \left( Du_{0}-Dw_{0}\right) dx=0.
\end{align*} 
Replacing $q$ by \eqref{q} we get: 
\begin{align*}\int_{\Omega }\int_{Y}\left( h\left(y, u_0, Du_{0}+D_{y}\pi
_{1}\left(u_0, Du_{0}\right) \right) -h\left(y, w_0, Dw_{0}+D_{y}\pi _{1}\left(w_0,
Dw_{0}\right) \right) \right) \cdot \left( Du_{0}-Dw_{0}\right) dxdy=0, \hbox{ i.e. } \\
\int_{\Omega }\int_{Y}\int_{Z}\left[a\left( y,z,u_{0}\left( x\right) ,\underbrace{%
Du_{0}\left( x\right) +D_{y}\pi_{1}\left(u_0, Du_{0}\right) +D_{z}\pi
_{2}\left(y, u_0, Du_{0}+D_{y}\pi _{1}\left(u_0, Du_{0}\right) \right) }_{=P
}\right) \right.-\\
\left.a\left( y,z,w_{0}\left( x\right) ,\underbrace{Dw_{0}\left( x\right)
+D_{y}\pi _{1}\left(w_0, Dw_{0}\right) +D_{z}\pi _{2}\left(y, w_0, Dw_{0}+D_{y}\pi
_{1}\left(w_0, Dw_{0}\right) \right) }_{=F }\right) \right]\cdot \\
\left[ \left( Du_{0}-Dw_{0}\right) \right] dxdydz=0.
\end{align*}
Thus, \begin{align*}
	\int_{\Omega }\int_{Y}\int_{Z}\left( a\left( y,z,u_{0}\left(
x\right) ,P \right) -a\left( y,z,w_{0}\left( x\right) ,F \right)
\right) \cdot \left( P -F \right) dxdydz=0.
\end{align*} 
Since we have 
\begin{align*}0=\int_{\Omega
}\int_{Y}\int_{Z}\left( a\left( y,z,u_{0}\left( x\right) ,P \right)
\right) \cdot \left( P -Du_{0}\right) dxdydz=
\\
\int_{\Omega }\int_{Y}\int_{Z}\left( a\left( y,z,w_{0}\left( x\right) ,F
\right) \right) \cdot\left( F -Dw_{0}\right) dxdydz
\end{align*}
and 
\begin{align*}0=\int_{\Omega }\int_{Y}\int_{Z}\left( a\left( y,z,u_{0}\left( x\right)
,P \right) \right) \cdot \left( F -Dw_{0}\right) dxdydz=\\
\int_{\Omega }\int_{Y}\int_{Z}\left( a\left( y,z,w_{0}\left( x\right) ,F
\right) \right) \cdot \left( P -Du_{0}\right) dxdydz,
\end{align*}  similar arguments as
in Remark \ref{remuniq}, relying on $(H_6)$ give uniqueness.
\end{proof}
\color{black}

\section{Appendix}
In  the following, requiring that the coefficients $a$ in problem \eqref{1.1} satisfy $(H_1)-(H_4)$ together with the periodicity assumption  $(H_5),$ we prove which are the spaces of regular functions, where the compositions of functions as in the weak formulation of \eqref{1.1} are meaningful.
Consequently, the existence of solutions of \eqref{1.1} in the general case, presented in Section \ref{solweak}, will follow by density arguments. Regarding uniqueness we recall that a strict monotonicity assumption, such as $(H_6)$, would guarantee it, cf. Remark \ref{remuniq} for the uniqueness of solutions for the limiting problem.
\color{black}
\medskip

\noindent By \eqref{1.3} of $(H_2)$ applied to $\zeta ^{\prime }=0,\lambda ^{\prime }=\omega ,$ $\zeta
,\lambda $ and by $\left( H_{5}\right),$   for every $1\leq i\leq d,$  it follows

\begin{align*}\left\vert a_{i}\left( y,z,\zeta ,\lambda \right) \right\vert \leq
	\left\vert a_{i}\left( y,z,0,\omega \right) \right\vert +c_{1}\widetilde{%
		\Psi }^{-1}\left( \Phi \left( c_{2}\left\vert \zeta \right\vert \right)
	\right) +c_{3}\widetilde{\Phi }^{-1}\left( \Phi \left( c_{4}\left\vert
	\lambda \right\vert \right) \right) \leq c,
\end{align*}
for a.e. $y \in \mathbb R^d$ and every $(z, \zeta, \lambda) \in \mathbb R^d \times \mathbb R \times \mathbb R^d $.

Using $\left(
H_{1}\right)$, for a.e. $y\in \mathbb R^d$ , every $(\zeta, \lambda) \in \mathbb R \times \mathbb R^d$, for $1\leq i \leq d$,
$$a_{i}\left( y,\cdot,\zeta ,\lambda \right)
\in \mathcal{C}_b\left( 
\mathbb{R}
_z^d\right),$$ 
and for each $(z,\zeta,\lambda) \in \mathbb R^d\times \mathbb R \times \mathbb R^d$, $1\leq i\leq d$, $$a_{i}\left( \cdot,z,\zeta
,\lambda \right) \hbox{ is measurable and bounded.}$$ 

\noindent Consequently, with the notation of Section \ref{notations}, for $(\zeta, \lambda)\in \mathbb R \times \mathbb R^d$ and $1\leq i \leq d$, $$a_{i}\left(\cdot,z,\zeta ,\lambda \right) \in 
L_{per}^{\infty }\left( 
\mathbb{R}
_y^d;\mathcal{C}_b\left(
\mathbb{R}
_z^d\right) \right) \subset \mathfrak{X}_{per}^{\widetilde{\Phi }}\left( 
\mathbb{R}
_{y}^d;\mathcal{C}_b(\mathbb R^d_z)\right).
$$ 

\noindent Recalling Section \ref{notations},
traces (in the sense of \eqref{traceoperator}) are well defined on $L^{\infty }\left( 
\mathbb{R}
_{y}^{d};\mathcal{C}_b\left( 
\mathbb{R}
_{z}^{d}\right) \right) $, thus also on $\mathcal{C}\left( \overline{\Omega }%
;L^{\infty }\left( 
\mathbb{R}_y^N;\mathcal{C}_b\left( 
\mathbb{R}_z^N\right) \right) \right)$, in particular for $(u, Du)\in \mathcal C(\overline \Omega; \mathbb R^{d+1})$ (cf. \cite{nnang reit}).

\noindent Next, assume that, for every $1\leq i \leq d, (\zeta, \lambda) \in \mathbb R \times \mathbb R^d$, $a_{i}\left( \cdot, \cdot,\zeta ,\lambda \right) \in \mathcal{C}%
_{per}\left( Y\times Z\right).$ 
Thus, for every $\left( f,\mathbf{f}\right) \in \mathcal{C}_{per}\left(
Y\times Z\right) ^{d+1}$, it results that $a_{i}\left(
	\cdot, \cdot,f(\cdot,\cdot),\mathbf{f}(\cdot,\cdot)\right) \in \mathfrak{X}_{per}^{\widetilde{\Phi }}\left( 
	\mathbb{R}
	_y^d;\mathcal{C}_b(\mathbb R^d_z)\right), $ hence 	$a_{i}\left( \cdot, \cdot,f(\cdot,\cdot),\mathbf{f}(\cdot,\cdot)
	\right) \in \mathcal{C}_b\left( 
	\mathbb{R}_y^d\times 
	\mathbb{R}_z^d\right).$

Let  $\Lambda _{1}$ and $\Lambda_2$ be compact sets of $%
\mathbb{R}
$  and $\mathbb{R}
^{d}$, respectively  such that $\left( f\left( y,z\right) ,\mathbf{f}\left( y,z\right)
\right) $ $ \in \Lambda _{1}\times \Lambda _{2}$, for every $(y,z)\in Y\times Z.$
For every $1\leq i \leq d$, and denoting by $a_i$, also its
restriction (with respect to the two last arguments) on $\Lambda
_{1}\times \Lambda _{2}$, we have that  $a_{i}\in \mathcal{C}\left( \Lambda
_{1}\times \Lambda _{2};\mathcal{C}_{per}\left( Y\times Z\right) \right) .$

\noindent Then, suppose that, for every $1\leq i \leq d$, $a_{i}\left( \cdot, \cdot,\lambda _{1},\mathbf{\lambda }%
_{2}\right) :=\mathcal{X}\varphi$, with $\mathcal{X}\equiv \mathcal{X}(\lambda_1,\lambda_2)\in \mathcal {C}\left( \Lambda _{1}\times\Lambda _{2} \right)$, 
and $\varphi \in \mathcal{C}_{per}\left( Y\times Z\right).$ Hence, there exists $\left(
f_{n}\right) _{n\in 
	\mathbb{N}}$, sequence of polynomials in the arguments $\left( \lambda _{1},\mathbf{\lambda }%
_{2}\right) $ such that $ f_{n}\rightarrow \mathcal{X}$ in $\mathcal{C}\left( \Lambda _{1}\times
\Lambda _{2}\right) $ as $n\rightarrow \infty.$

\noindent Therefore $f_{n}\left( f(\cdot,\cdot),%
\mathbf{f}(\cdot,\cdot)\right) \rightarrow \mathcal{X}\left( f(\cdot,\cdot),\mathbf{f}(\cdot,\cdot)\right) $ in $%
\mathcal{C}_b\left( 
\mathbb{R}
_{y}^{d}\times 
\mathbb{R}
_{z}^{d}\right) $ as $n\rightarrow \infty,$ i.e.  $$\underset{\left(
	y,z\right) }{\sup }\left\Vert f_n \left( f(y,z),\mathbf{f}\left(
y,z\right)\right)  - \mathcal{X}( f(y,z),\mathbf{f}(
y,z)) \right\Vert _{\infty }\rightarrow 0$$ and the function $\mathcal{X}%
\left( f,\mathbf{f}\right) \in \mathcal{C}_{per}\left( Y\times Z\right).$ Consequently, for every $1\leq i \leq d$, $$a_{i}\left( \cdot, \cdot,f(\cdot,\cdot),\mathbf{f}(\cdot,\cdot)\right) =\mathcal{X}\left( f(\cdot,\cdot),\mathbf{f}(\cdot,\cdot)
\right) \varphi(\cdot,\cdot) \in \mathcal{C}_{per}\left( Y\times Z\right).$$ Analogously for every $1\leq i \leq d$, if $a_{i}\left( \cdot, \cdot,\lambda _{1},\mathbf{\lambda }_{2}\right)
:=\sum\limits_{j=1}^s\mathcal{X}_{j}\left( \lambda _{1},\mathbf{%
	\lambda }_{2}\right) \varphi _{j}(\cdot,\cdot),$ 
$ \,\left( \lambda _{1},\mathbf{\lambda }_{2}\right) \in \Lambda
_{1}\times \Lambda _{2}$, $s \in \mathbb N$, with ${\mathcal X}_i \in \mathcal {C}\left( \Lambda _{1}\times\Lambda _{2}\right) $, 
and $\varphi_i \in \mathcal{C}_{per}\left( Y\times Z\right)$, then   $$ a_i \in \mathcal{C}(\Lambda_1\times \Lambda_2;\mathcal{C}_{per}\left( Y\times Z\right)).$$  

\noindent Taking $a_{i}$ arbitrarily in $\mathcal{C}\left( \Lambda _{1}\times
\Lambda _{2};\mathcal{C}_{per}\left( Y\times Z\right) \right) $ satisfying
the hypotheses $(H_1)-(H_5)$, by the density of $\mathcal{C}\left( \Lambda _{1}\times
\Lambda _{2}\right) \otimes $ $\mathcal{C}_{per}\left( Y\times Z\right) $ in 
$\mathcal{C}\left( \Lambda _{1}\times \Lambda _{2};\mathcal{C}_{per}\left(
Y\times Z\right) \right) $  it results that there exists $\left(
g_{n}\right) _{n\in 
	\mathbb{N}
}\subset \mathcal{C}\left( \Lambda _{1}\times \Lambda _{2}\right) \otimes $ $%
\mathcal{C}_{per}\left( Y\times Z\right) $ such that \begin{align*}\left\Vert
	g_{n}-a_{i}\right\Vert _{\mathcal{C}\left( \Lambda _{1}\times \Lambda _{2};%
		\mathcal{C}_{per}\left( Y\times Z\right) \right) }\equiv \\
	\underset{\begin{array}{ll}(
			\lambda _{1},\mathbf{\lambda }_{2}) \in \Lambda _{1}\times \Lambda _{2}%
			\\
			(y,z) \in \mathbb{R}_{y}^{d}\times 
			\mathbb{R}
			_{z}^{d}
	\end{array}}{\sup}
	\left\vert g_{n}\left( y,z,\lambda _{1},\mathbf{\lambda }%
	_{2}\right)  -a_{i}\left( y,z,\lambda _{1},\mathbf{\lambda }_{2}\right)
	\right\vert \rightarrow 0, \hbox{ as }n\rightarrow \infty .
\end{align*}

\noindent Then $g_{n}\left(
\cdot, \cdot,f(\cdot,\cdot),\mathbf{f}(\cdot,\cdot)\right) \rightarrow a_{i}\left( \cdot, \cdot,f(\cdot,\cdot ),\mathbf{f}(\cdot,\cdot )\right) $ in 
$\mathcal{C}_b\left( 
\mathbb{R}_{y}^{d}\times 
\mathbb{R}_{z}^{d}\right) $ as $n\rightarrow +\infty .$ Thus, since $g_{n}\left( \cdot, \cdot,f(\cdot,\cdot),
\mathbf{f}(\cdot.\cdot )\right)$ belongs to $ \mathcal{C}_{per}\left( Y\times Z\right) $, which is closed in $%
\mathcal{C}_b\left(\mathbb{R}_{y}^{d}\times 
\mathbb{R}_{z}^{d}\right),$ it results that $$a_{i}\left( \cdot, \cdot,f(\cdot,\cdot),\mathbf{f}(\cdot,\cdot)\right) \in \mathcal{C}%
_{per}\left( Y\times Z\right) \subset \mathfrak{X}_{per}^{\widetilde{\Phi }%
}\left( 
\mathbb{R}
_{y}^{d};\mathcal{C}_b(\mathbb R^d_z)\right).$$

\smallskip
\noindent For the general case, i.e. assuming that for every $1\leq i \leq d$, $a_{i}\left( \cdot, \cdot,\zeta ,\lambda \right) \in
L_{per}^{\infty }\left( 
\mathbb{R}_{y}^{d};\mathcal{C}_b\left( 
\mathbb{R}_{z}^{d}\right) \right) $ for each $\left( \zeta ,\lambda \right) \in 
\mathbb{R}
\times 
\mathbb{R}_{z}^{d}$, we exploit condition \eqref{3.84}. 

\smallskip
\noindent Let $\theta \in \mathcal{D}\left( 
\mathbb{R}^d\right) $ with $\theta \geq 0,$ supp \,$ \theta \subset \overline{B}_d(0,1)$
and $\int_{\mathbb R^d} \theta\left( y\right) dy=1.$ For any
integer $n\geq 1,$ set $\theta _{n}\left( y\right) :=n^d\theta \left(
ny\right), y\in \mathbb{R}
^{d}.$ Define, for every $\left( z,\zeta
,\lambda \right) \in 
\mathbb{R}^d\times \mathbb{R}\times 
\mathbb{R}^{d}$ and $1\leq i\leq d $,
\begin{align*}
	g_{n}^{i}\left( y,z,\zeta,\lambda \right) :=\int_{\mathbb R^d} \theta _{n}\left( \xi
	\right) a_{i}\left( y-\xi, z, \zeta,\lambda \right) d\xi,
\end{align*}%
and, let $g_{n}:=\left( g_{n}^{i}\right) _{1\leq i\leq d}$.
Clearly $g_{n}^{i}\in \mathcal{C}_{per}\left( Y\times Z\right) $ with $%
g_{n}\left( \cdot, \cdot,0,\omega \right) \in L^{\infty }\left( 
\mathbb{R}
_{y}^d\times
\mathbb{R}_z^d;
\mathbb{R}
\right)$. Moreover 
\begin{align*}
	\left( g_{n}\left( y,z,\zeta ,\lambda \right) -g_{n}\left(
	y,z,\zeta ^{\prime },\lambda ^{\prime }\right) \right) \cdot\left( \lambda
	-\lambda ^{\prime }\right) \geq c_{5}\Phi \left( \left\vert \lambda
	-\lambda'\right\vert \right),
	\hbox{ and }\\
	\left\vert g_{n}\left( y,z,\zeta ,\lambda \right) -g_{n}\left(
	y,z,\zeta ^{\prime },\lambda ^{\prime }\right) \right\vert \leq c_{1}%
	\widetilde{\Psi }^{-1}\left( \Phi \left( c_{2}\left\vert \zeta -\zeta
	^{\prime }\right\vert \right) \right) +c_{3}\widetilde{\Phi }^{-1}\left(
	\Phi \left( c_{4}\left\vert \lambda -\lambda \right\vert \right) \right),
\end{align*}
for all $y,z \in \mathbb R^d,\lambda ,\lambda ^{\prime }\in 
\mathbb{R}
^{d},\zeta ,\zeta ^{\prime }\in \mathbb{R}.$ From the above considerations, $g_{n}\left( \cdot, \cdot,f(\cdot,\cdot),\mathbf{f}(\cdot,\cdot)\right) \in \mathcal{C}%
_{per}\left( Y\times Z\right) ^{d}.$

\noindent Introducing the space
\begin{align*}\left( L^{\widetilde{\Phi }},l^{\infty }\right) \left( 
	\mathbb{R}_y^N;L^{\infty }\left( 
	\mathbb{R}
	_{z}^N\right) \right) :=\left\{ u\in L_{loc}^{\widetilde{\Phi }}\left( 
	\mathbb{R}
	_{y}^N;L^{\infty }\left( 
	\mathbb{R}
	_{z}^N\right) \right) :\left\Vert u\right\Vert _{\left( L^{\widetilde{\Phi 
		}},l^{\infty }\right) \left(
		\mathbb{R}
		_{y}^N;L^{\infty }\left( 
		\mathbb{R}
		_{z}^N\right) \right) }<\infty \right\}, 
\end{align*}
endowed with the norm
\begin{align*}
	\left\Vert u\right\Vert _{\left( L^{\widetilde{\Phi }},l^{\infty }\right)
		\left( 
		\mathbb{R}_{y}^N;L^{\infty }\left( 
		\mathbb{R}
		_{z}^N\right) \right) }=\underset{k\in 
		\mathbb{Z}
		^N}{\sup }\left\Vert \left\Vert u\left( y,\cdot\right) \right\Vert _{\infty
	}\right\Vert _{\widetilde{\Phi },k+Y},
\end{align*}
it results that, for every $1\leq i \leq d$,
$$a_{i}\left( \cdot, \cdot,f(\cdot,\cdot),\mathbf{f}(\cdot,\cdot)\right) \in L_{per}^{\infty }\left( 
\mathbb{R}
_{y}^N;\mathcal{C}_b\left(
\mathbb{R}_{z}^N\right) \right) \subset \left( 	L^{\widetilde{\Phi }},l^{\infty
}\right) \left( 
\mathbb{R}_y^N;L^{\infty }\left( 
\mathbb{R}_z^N\right) \right).$$
Hence, for $\eta >0$, there exists $N_{0}\in 
\mathbb{N}
,$ such that if $n\geq N_{0},$
\begin{align*}\left\vert g_{n}^{i}\left(
	y,z,\zeta ,\lambda \right) -a_{i}\left( y,z,\zeta ,\lambda\right)\right\vert \leq \\
	\int_{\frac{1}{n}B_{d}(0,1)} \left\vert \theta
	_{n}\left( \xi \right)a_{i}\left( y-\xi ,z,\zeta ,\lambda
	\right) -a_{i}\left( y ,z,\zeta ,\lambda \right) \right\vert d\xi \leq \eta.
\end{align*}
Furthermore,
\begin{align*}
	\left\Vert \left\Vert g_{n}^{i}\left( y,z,f(y,z),\mathbf{f}(y,z)\right)-a_{i}\left( y,z,f(y,z),\mathbf{f}(y,z)\right) \right\Vert _{L^\infty(Z) }\right\Vert _{%
		\widetilde{\Phi },Y}=
	\\
	\underset{\left\Vert u\right\Vert _{\Phi ,Y}\leq 1}{\sup }%
	\left\vert \int_{Y}\left\Vert g_{n}^{i}\left( y,z,f(y,z),\mathbf{f}(y,z)\right)
	-a_{i}\left( y,z,f(y,z),\mathbf{f}(y,z)\right) \right\Vert _{L^\infty(Z) }u(y)dy\right\vert.\nonumber
\end{align*}

Thus,
\begin{align*}\left\vert \int_{Y}\left\Vert g_{n}^{i}\left( y,z,f(y,z),\mathbf{f}(y,z)\right)
	-a_{i}\left( y,z,f(y,z),\mathbf{f}(y,z)\right) \right\Vert _{L^\infty(Z)}u(y)dy\right\vert =\\
	\left\vert \int_{Y}\underset{z}{\sup }\left\vert \int_{\frac{1}{n}%
		B_d(\omega,1)}\theta _{n}\left( \xi \right) a_{i}\left( y-\xi ,z,f(y,z),\mathbf{f}(y,z)\right)
	d\xi -a_{i}\left( y,z,f(y,z),\mathbf{f}(y,z)\right) \right\vert u\left( y\right)
	dy\right\vert= \\
	\left\vert \int_{Y}\underset{z}{\sup }\left\vert \int_{\frac{1}{n}%
		B_d(\omega,1)}\theta _{n}\left( \xi \right) \left( a_{i}\left( y-\xi ,z,f(y,z),\mathbf{f}(y,z)%
	\right) -a_{i}\left( y,z,f(y,z),\mathbf{f}(y,z)\right) \right) \right\vert d\xi
	u\left( y\right) dy\right\vert \leq \\
	\int_{Y}\underset{z}{\sup }\left\vert \int_{\frac{1}{n}B_d(\omega,1)}\theta
	_{n}\left( \xi \right) \left( a_{i}\left( y-\xi ,z,f(y,z),\mathbf{f}(y,z)\right)
	-a_{i}\left( y,z,f(y,z),\mathbf{f}(y,z)\right) \right) \right\vert \left\vert u\left(
	y\right) \right\vert d\xi dy
	\leq\\
	\int_{Y}\left\vert \int_{\frac{1}{n}B_d(\omega,1)}\theta _{n}\left( \xi
	\right) \eta \right\vert \left\vert u\left( y\right) \right\vert d\xi dy\leq
	\int_{\frac{1}{n}B_d(\omega,1)}\theta _{n}\left( \xi \right) \int_{Y}\left\vert \eta
	\right\vert \left\vert u\left( y\right) \right\vert dyd\xi \leq 
	C \eta \int_Y |u(y)|dy. 
\end{align*}



\noindent Then, considering
the space ${\Xi}^{\widetilde{\Phi }}\left( 
\mathbb{R}_{y}^N;{\mathcal C}_b\left( 
\mathbb{R}
_{z}^N\right) \right)$ in \eqref{Xi}, endowed with the norm \eqref{ornLBPer} (with the $N$-function $B$ therein replaced by $\widetilde{\Phi}$), taking into account Lemma \ref{lemma2.2} and Proposition \ref{prop2.1}, (i.e. 
$\left\Vert u\right\Vert _{\Xi ^{\widetilde{\Phi}}\left( 
\mathbb{R}
_{y}^d;\mathcal C_b\left( 
\mathbb{R}_{z}^d\right) \right)}$ equivalent to
$\left\Vert u\right\Vert _{\widetilde{\Phi },Y\times Z}$ ), the above estimates provide
(with $ u\in \mathfrak{X}_{per}^{\widetilde{\Phi }}\left( 
\mathbb{R}
_{y}^d;\mathcal{C}_b(\mathbb R_z^d)\right) $) \begin{align*}\left\Vert g_{n}^{i}\left( \cdot, \cdot,f(\cdot,\cdot),\mathbf{%
	f}(\cdot,\cdot)\right) -a^{i}\left( \cdot, \cdot,f(\cdot,\cdot),\mathbf{f}(\cdot,\cdot)\right) \right\Vert _{\Xi ^{\widetilde{\Phi}}\left( 
	\mathbb{R}
	_{y}^d;\mathcal C_b\left( 
	\mathbb{R}_{z}^d\right) \right)}\leq 2c\left\Vert \eta \right\Vert _{\widetilde{%
		\Phi },Y}\end{align*} for every $n\geq N_{0}$.
The arbitrariness of $\eta $ proves that, for every $1\leq i \leq d$,
\begin{align*}
g_{n}^{i}\left( \cdot, \cdot,f(\cdot,\cdot),\mathbf{f}(\cdot,\cdot)\right) \rightarrow a_{i}\left(
\cdot, \cdot,f(\cdot,\cdot),\mathbf{f}(\cdot,\cdot)\right) \hbox{ in } {\Xi ^{\widetilde{\Phi}}\left( 
	\mathbb{R}
	_{y}^d;\mathcal C_b\left( 
	\mathbb{R}_{z}^d\right) \right)},
\end{align*}
and, since $g_{n}^{i}\left( \cdot, \cdot,f(\cdot,\cdot),\mathbf{f}(\cdot,\cdot)\right) \in \mathcal{C}_{per}\left( Y\times
Z\right)$, it results that $$a_{i}\left( \cdot, \cdot,f(\cdot,\cdot),\mathbf{f}(\cdot,\cdot)\right) \in \mathfrak{X}_{per}^{%
\widetilde{\Phi }}\left( 
\mathbb{R}
_{y}^d;\mathcal{C}_b(\mathbb R^d_z)\right).$$



\subsection{Definition of $a^{\protect\varepsilon }\left( \cdot, \cdot,w^{\protect%
	\varepsilon },\mathbf{W}^{\protect\varepsilon }\right) $ for $\left( w,%
\mathbf{W}\right) \in \mathcal{C}\left( \overline{\Omega };\mathcal{C}_b\left( 
\mathbb{R}_{y}^{d}\times 
\mathbb{R}_{z}^{d}\right) \right) $} \hfill

\bigskip

\noindent Let $U$ be a bounded open set of $\mathbb{R}
_{y}^d$ and let $u\in \mathcal{C}_b\left( \overline{U}\right) \otimes
L^{\infty }\left(
\mathbb{R}_t^d,F\right)$, where $F$ is a Banach space. It is well known that there
exists a negligible set $\mathcal{N\subset }
\mathbb{R}_{t}^d$ such that $\left\Vert u\left( y,t\right) \right\Vert _{F}\leq 
\underset{y\in \overline{U}}{\sup }\left\Vert u\left( y,\cdot\right) \right\Vert
_{L^{\infty }\left( 
\mathbb{R}
_{t}^d,F\right) },$ for every $y\in \overline{U}$ and $t\in 
\mathbb{R}_{t}^d\setminus \mathcal{N}.$
Moreover, given $s \in \mathbb N$, and assuming that $u=\sum_{i=1}^s\varphi _{i}\otimes \psi
_{i}$, with $\varphi _{i}\in \mathcal{C}\left( \overline{U}\right) ,\psi _{i}\in
L^{\infty }\left( 
\mathbb{R}
_{t}^d,F\right),$ one can define ${\widetilde u^{\varepsilon }}\in L^{\infty }\left( U,F\right) $ by $${\widetilde u^{\varepsilon }}\left( y\right):=\sum_{i=1}^s
\varphi _{i}\left( y\right) \otimes \psi _{i}\left( \frac{y}{\varepsilon }%
\right), \, y\in U.$$ 
Consequently, it is defined a linear trace operator $u\rightarrow
{\widetilde u^{\varepsilon }}$ from $\mathcal{C}_b\left( \overline{U}\right) \otimes
L^{\infty }\left( 
\mathbb{R}
_{t}^d,F\right) $ to $L^{\infty }\left( U,F\right) .$ Since $\mathcal{C}_b%
\left( \overline{U}\right) \otimes L^{\infty }\left( 
\mathbb{R}
_{t}^d,F\right) $\ is dense in $\mathcal{C}_b\left( \overline{U};L^{\infty
}\left( 
\mathbb{R}
_{t}^d,F\right) \right) $, it is also known that the trace operator $%
u\rightarrow {\widetilde u^{\varepsilon }}$ extends by continuity to a unique linear
operator from $\mathcal{C}_b\left( \overline{U};L^{\infty }\left( 
\mathbb{R}
_{t}^d,F\right) \right) $ to $L^{\infty }\left( U,F\right) $ still denoted by
$u\rightarrow {\widetilde u^{\varepsilon }}$ such that

\bigskip\ $\left\Vert {\widetilde u^{\varepsilon }}\right\Vert _{L^{\infty }\left(
U,F\right) }\leq \underset{y\in \overline{U}}{\sup }\left\Vert u\left(
y, \cdot\right) \right\Vert _{L^{\infty }\left( 
\mathbb{R}
_{t}^d,F\right) },$ for all $u\in $\ $\mathcal{C}_b\left( \overline{U}%
;L^{\infty }\left( 
\mathbb{R}
_{t}^d,F\right) \right) .$

\noindent Observe that, with the above strategy,  we have defined the application $u\rightarrow {\widetilde u^{\varepsilon }}$ which is the trace operator
on $\left\{ \left( y,t\right) :t=\frac{y}{\varepsilon },y\in U\right\}.$

Let $V$ be an open bounded subset of $%
\mathbb{R}_{z}^d$ and consider $\psi$ of the form $\psi =\varphi \otimes v\otimes
\phi $ with $\varphi \in \mathcal{C}\left( \overline{U}\right) ,v\in 
\mathcal{C}\left( \overline{V}\right) $ and $\phi \in L^{\infty }\left( 
\mathbb{R}_t^d;\mathcal{C}_b\left( 
\mathbb{R}
_{\zeta }^d\right) \right).$ For fixed $y$ and $t$ in $%
\overline{U}$ and $\mathbb{R}_t^d$, respectively, the function $\left( z,\zeta \right) \rightarrow \varphi \left(
y\right) v\left( z\right) \phi \left( t,\zeta \right) $ is an element of $%
\mathcal{C}_b\left( \overline{V}\times 
\mathbb{R}
_{\zeta }^d\right)$ such that its trace of order $\varepsilon $ on $%
\left\{ \left( z,\zeta \right) :\zeta =\frac{z}{\varepsilon },z\in V\right\}
$ is well defined. In particular the function $z\rightarrow \varphi \left(
y\right) v\left( z\right) \phi \left( t,z\right) $ belongs to $\mathcal{C}
\left( \overline{V}\right).$ Therefore, the function $\left( y,t\right)
\rightarrow \varphi \left( y\right) \phi \left( t,\cdot \right) $ belongs to $%
\mathcal{C}\left( \overline{U}\right) \otimes L^{\infty }\left( 
\mathbb{R}
_{t}^N;\mathcal{C}\left( \overline{V}\right) \right) .$
Thus, its trace of
order $\varepsilon >0$ on $\left\{ \left( y,t\right) :t=\frac{y}{\varepsilon 
}, y\in U \right\} $ is defined as above. It follows that the
function $\left( y,z\right) \rightarrow \varphi \left( y\right) v\left(
z\right) \phi \left( y,z\right) $ is well defined and belongs to $L^{\infty
}\left(U;\mathcal{C}\left( \overline{V}\right) \right) $ with 
\begin{equation}\label{4.1}
\left\vert \psi \left( y,z,y,z\right) \right\vert \leq \underset{a\in 
	\overline{U}}{\sup }\underset{b\in \overline{V}}{\sup }\left\Vert \psi
\left( a,\cdot, b, \cdot \right) \right\Vert _{L^{\infty }\left( 
	\mathbb{R}
	^d,\mathcal{C}_b\left(
	\mathbb{R}^d\right) \right) }.
\end{equation}%
Then for every positive integer $s$, let $\psi :=\sum\limits_{i=1}^s\varphi _{i}\otimes
v_{i}\otimes \phi _{i},$ where $\varphi _{i}\in \mathcal{C}\left( \overline{U}%
\right) ,v_{i}\in \mathcal{C}\left( \overline{V}\right) $ and $\phi _{i}\in
L^{\infty }\left(\mathbb{R}_{t}^{d},
\mathbb{R}_{z}^{d}\right) .$ From  the above formulas, it follows that each function $\left( y,z\right) \rightarrow
\varphi _{i}\left( y\right) v_{i}\left( z\right) \phi _{i}\left( y,z\right) $
belongs to $L^{\infty }\left( U;\mathcal{C}\left( \overline{V}\right)
\right).$ Consequently $\left( y,z\right) \rightarrow \psi \left(
y,z,y,z\right) $ is an element of $L^{\infty }\left( U;\mathcal{C}\left( 
\overline{V}\right) \right) $ verifying  \eqref{4.1}
Thus the restriction on $\mathcal{C}\left( \overline{U}\right) \otimes 
\mathcal{C}\left( \overline{V}\right) \otimes L^{\infty }\left( \mathbb{R}
_{t}^d;\mathcal{C}_b\left( 
\mathbb{R}
_{\zeta }^d\right) \right) $ of the transformation $\psi \rightarrow {\widetilde\psi
^{\varepsilon =1}}$ from $\mathcal{C}_b\left( \overline{U}\times \overline{V}%
;L^{\infty }\left( 
\mathbb{R}
_{t}^d;\mathcal{C}_b\left( 
\mathbb{R}_{\zeta }^d\right) \right) \right) $ to $L^{\infty }\left( U\times
V\right) $ is a continuous linear operator from $\mathcal{C}\left( \overline{U%
}\right) \otimes \mathcal{C}\left( \overline{V}\right) \otimes L^{\infty
}\left(
\mathbb{R}
_{t}^d;\mathcal{C}_b\left(
\mathbb{R}
_{\zeta }^d\right) \right) $ to $L^{\infty }\left( U;\mathcal{C}\left( 
\overline{V}\right) \right) .$ This operator, in turn, extends uniquely by continuity
and density to a linear and continuous operator from $\mathcal{C}\left( 
\overline{U}\times \overline{V};L^{\infty }\left( 
\mathbb{R}_{t}^d;\mathcal{C}_b\left( 
\mathbb{R}
_{\zeta }^d\right) \right) \right) $ to $L^{\infty }\left( U;\mathcal{C}
\left( \overline{V}\right) \right) $ still denoted by $\psi \rightarrow {\widetilde\psi
^{\varepsilon =1}}$ with 
\begin{equation*}
\left\Vert {\widetilde\psi ^{\varepsilon =1}}\right\Vert _{L^{\infty }\left( U;{\mathcal C
	}\left( \overline{V}\right) \right) }\leq \underset{a\in \overline{U}}{\sup }\;
\underset{b\in \overline{V}}{\sup }\left\Vert \psi \left( a,\cdot, b, \cdot \right)
\right\Vert _{L^{\infty }\left( \mathbb{R}
	^d,\mathcal{C}_b\left( 
	\mathbb{R}
	^d\right) \right) }
\end{equation*}%
for all $\psi \in \mathcal{C}_b\left( \overline{U}\times \overline{V}%
;L^{\infty }\left( 
\mathbb{R}
_{t}^d,\mathcal{C}_b\left( 
\mathbb{R}
_{\zeta }^d\right) \right) \right) .$ 

\noindent Moreover, given $\psi \in \mathcal{C}_b\left( 
\overline{U}\times \overline{V};L^{\infty }\left(
\mathbb{R}
_{t}^d,\mathcal{C}_b\left( 
\mathbb{R}
_{\zeta }^d\right) \right) \right)$, if for  any fixed $\left(
y,z\right) \in \overline{U}\times \overline{V}$ it results $\psi \left(
y,z,t,\zeta \right) \geq 0$ for every $\zeta \in 
\mathbb{R}
^d$ and for a.e. $t\in
\mathbb{R}
^d,$ then there exists a negligible set $\mathcal{N  \subset }\mathbb{R}
_{y}^d$ such that ${\widetilde\psi ^{\varepsilon =1}}\geq 0$ for all $\left(
y,z\right) \in \left( U\setminus \mathcal{N}\right) \times V.$ 

\noindent All the above considerations can be
generalized on $\mathcal{C}_b\left( 
\mathbb{R}_{y}^d\times 
\mathbb{R}_{z}^d;L^{\infty }\left( 
\mathbb{R}_{t}^d,\mathcal{C}_b\left( 
\mathbb{R}_{\zeta }^d\right) \right) \right),$ indeed, we have

\begin{proposition}\label{prop 3.1}
For $\psi \in \mathcal{C}_b\left(
\mathbb{R}
_{y}^d\times 
\mathbb{R}
_{z}^d;L^{\infty }\left(
\mathbb{R}_{t}^d;\mathcal{C}_b\left( 
\mathbb{R}
_{\zeta }^d\right) \right) \right),$ the trace of order $\varepsilon =1$ 
on  \hfill \eject $\left\{ \left( y,z,t,\zeta \right) :t=y\text{ and }\zeta =z,\left(
y,z\in 
\mathbb{R}
^d\right) \right\} $ of $\psi $ belongs to $L^{\infty }\left( 
\mathbb{R}_{y}^d;\mathcal{C}_b\left( 
\mathbb{R}
_{z}^d\right) \right).$ Moreover, for every $\left( y,z\right)\in 
\mathbb{R}^d,$ if $\psi \left( y,z,t,\zeta \right) \geq 0,$ for every $\zeta \in \mathbb{R}^d$ and for a.e. $t\in 
\mathbb{R}^d,$ then ${\widetilde\psi ^{\varepsilon =1}}\left( y,z\right) \geq 0,$ for
every $z\in \mathbb{R}^d$ and for a.e. $y\in 
\mathbb{R}^d.$
\begin{proof}
	Let $n$ be a positive integer. Let $U_{n}$ and $V_n$ be the open
	balls of $\mathbb{R}_y^d$  and $
	\mathbb{R}_z^d$, respectively, centered at $\omega $, with radius $n.$ Set $\psi_n:=\psi\lfloor_{\overline{U}_{n}\times \overline{V}_{n}},$ (i.e. the restriction with respect to the first two variables) then $\psi _{n}$ is an
	element of $\mathcal{C}_b\left( \overline{U}_{n}\times \overline{V_n};\right.$ $ \left. L^{\infty }\left( 
	\mathbb{R}_{t}^{d},\mathcal{C}_b\left( 
	\mathbb{R}
	_{\zeta }^{d}\right) \right) \right).$ Thus, following the  above arguments, one can define ${\widetilde\psi
		_{n}^{\varepsilon =1}}\in L^{\infty }\left( U_{n};\mathcal{C}\left( \overline{%
		V}_{n}\right) \right).$  Hence it results  that the sequence $\left({\widetilde\psi _{n}^{\varepsilon
			=1}}\right)_n$ verifies ${\widetilde\psi _{n}^{\varepsilon =1}}={\widetilde\psi
		_{n+1}^{\varepsilon =1}}\lfloor_{\overline{U}_{n}\times \overline{V}%
		_{n}}$, for every $n \in \mathbb N$. In particular this guarantees that there is a unique ${\widetilde \psi ^{\varepsilon =1}}\in
	L^{\infty }\left( 
	\mathbb{R}
	_{y}^{d},\mathcal{C}_b\left( 
	\mathbb{R}
	_{z}^{d}\right) \right) $ such that ${\widetilde\psi ^{\varepsilon =1}}\lfloor_{
		U_{n}\times V_{n}}= {\widetilde\psi _{n}^{\varepsilon =1}},$ for $n\in 
	\mathbb{N}.$ Moreover the continuity is a consequence of \eqref{4.1}. 
	
	\medskip
	To prove the second part, we start assuming that for every $y,z\in 
	\mathbb{R}^{d}$ it results $\psi \left( y,z,t,\zeta \right) \geq 0$ for every $\zeta \in 
	\mathbb{R}^{d}$ and for a.e. $t\in 
	\mathbb{R}^{d}.$ Then, for every $n \in \mathbb N$, $n >0$, it results that $\psi _{n}\left( y,z,t,\zeta \right) \geq 0$ for every $\zeta \in 
	\mathbb{R}^{d}$ and a.e. $t\in \mathbb{R}^{d}$. 
	
	By the above considerations about traces (prior to Proposition \ref{prop 3.1}), for every $n \in \mathbb N$,
	there exists a negligible set $\mathcal{N}_{n}\subset 
	\mathbb{R}_{y}^{d}$ such that ${\widetilde\psi_{n}^{\varepsilon =1}}\left( y,z\right) \geq 0$
	for all $(y,z) \in \left(U_{n}\setminus \mathcal{N}\right)
	\times V_{n}.$ Then ${\widetilde \psi _{n}^{\varepsilon =1}}\left( y,z\right) \geq 0$ for
	every $\left( y,z\right) \in \left( U_{n}\setminus \mathcal{N}\right) \times
	V_{n}$, $ n\in \mathbb{N},$ where ${\mathcal N}=\bigcup\limits_{n\geq 1}\mathcal{N}%
	_{n},$ from which it follows that ${\widetilde\psi ^{\varepsilon =1}}\left( y,z\right) \geq 0$ for
	every $z\in \mathbb{R}^{d}$ and a.e. $y\in\mathbb{R}^{d}.$
\end{proof}
\end{proposition}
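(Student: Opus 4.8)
The plan is to reduce to the bounded-domain construction carried out immediately before the statement, by exhausting $\mathbb{R}^d_y\times\mathbb{R}^d_z$ with concentric balls and then gluing the resulting traces together. First I would fix, for each positive integer $n$, the open balls $U_n\subset\mathbb{R}^d_y$ and $V_n\subset\mathbb{R}^d_z$ centered at the origin with radius $n$, and set $\psi_n:=\psi|_{\overline{U}_n\times\overline{V}_n}$, the restriction in the first two (slow) variables only. Since $\psi\in\mathcal{C}_b(\mathbb{R}^d_y\times\mathbb{R}^d_z;L^{\infty}(\mathbb{R}^d_t;\mathcal{C}_b(\mathbb{R}^d_\zeta)))$, each $\psi_n$ lies in $\mathcal{C}_b(\overline{U}_n\times\overline{V}_n;L^{\infty}(\mathbb{R}^d_t;\mathcal{C}_b(\mathbb{R}^d_\zeta)))$, so the diagonal trace operator $\psi_n\mapsto\widetilde{\psi}_n^{\varepsilon=1}\in L^{\infty}(U_n;\mathcal{C}(\overline{V}_n))$ built above (the trace on $\{t=y,\ \zeta=z\}$) applies, together with the bound \eqref{4.1}.

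Next I would check the compatibility relation $\widetilde{\psi}_n^{\varepsilon=1}=\widetilde{\psi}_{n+1}^{\varepsilon=1}|_{\overline{U}_n\times\overline{V}_n}$ for every $n$. This holds because the trace operator is obtained by density from elementary tensors $\varphi\otimes v\otimes\phi$, on which restriction and trace manifestly commute, and the continuity estimate \eqref{4.1} propagates the commutation to the closure. The family $(\widetilde{\psi}_n^{\varepsilon=1})_n$ is therefore a coherent system, so it determines a unique element $\widetilde{\psi}^{\varepsilon=1}\in L^{\infty}(\mathbb{R}^d_y;\mathcal{C}_b(\mathbb{R}^d_z))$ with $\widetilde{\psi}^{\varepsilon=1}|_{U_n\times V_n}=\widetilde{\psi}_n^{\varepsilon=1}$; continuity and boundedness in the $z$-variable, as well as the global estimate $\|\widetilde{\psi}^{\varepsilon=1}\|_{L^{\infty}(\mathbb{R}^d_y;\mathcal{C}_b(\mathbb{R}^d_z))}\leq\sup_{a,b}\|\psi(a,\cdot,b,\cdot)\|_{L^{\infty}(\mathbb{R}^d;\mathcal{C}_b(\mathbb{R}^d))}$, follow by letting $n\to\infty$ in \eqref{4.1}.

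For the positivity assertion I would suppose that, for every $(y,z)$, $\psi(y,z,t,\zeta)\geq0$ for every $\zeta\in\mathbb{R}^d$ and a.e. $t\in\mathbb{R}^d$. Then each $\psi_n$ inherits the same sign, so by the nonnegativity property of the bounded-domain trace recalled before the statement there is a negligible set $\mathcal{N}_n\subset\mathbb{R}^d_y$ with $\widetilde{\psi}_n^{\varepsilon=1}(y,z)\geq0$ for all $(y,z)\in(U_n\setminus\mathcal{N}_n)\times V_n$. Putting $\mathcal{N}:=\bigcup_{n\geq1}\mathcal{N}_n$, which is still negligible as a countable union of null sets, and invoking the compatibility relation, one obtains $\widetilde{\psi}^{\varepsilon=1}(y,z)\geq0$ for every $z\in\mathbb{R}^d$ and a.e. $y\in\mathbb{R}^d$.

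I expect the only delicate point to be the verification that the trace operator on $\overline{U}_n\times\overline{V}_n$ truly commutes with restriction to a smaller ball: since that operator is defined merely by density rather than pointwise, one must argue first on elementary tensors and then pass to the closure using \eqref{4.1}, exactly as in the construction preceding the statement. The rest is bookkeeping with the exhaustion and with countable unions of null sets.
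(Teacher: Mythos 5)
Your proposal is correct and follows essentially the same route as the paper's own proof: exhaustion by the balls $U_n\times V_n$, application of the bounded-domain trace operator to the restrictions $\psi_n$, gluing via the compatibility relation $\widetilde{\psi}_n^{\varepsilon=1}=\widetilde{\psi}_{n+1}^{\varepsilon=1}\lfloor_{\overline{U}_n\times\overline{V}_n}$, and the countable union of the null sets $\mathcal{N}_n$ for the positivity statement. Your extra justification of the compatibility relation (checking it on elementary tensors and passing to the closure via \eqref{4.1}) is a welcome elaboration of a step the paper merely asserts.
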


\begin{corollary}\label{cor42}
\bigskip Let $a:= (a_i)_{1\leq i \leq d}:\mathbb R^d \times\mathbb R^d \times \mathbb R \times \mathbb R^d\to \mathbb R^d $ satisfy $(H_1)-(H_4)$ and let $\left( w,\mathbf{W}\right) \in \mathcal{C}_b\left( 
\mathbb{R}_{y}^{d}\times 
\mathbb{R}
_{z}^{d}\right)^{d+1}.$ For $1\leq i\leq d,$ the function $\left(
y,z\right) \rightarrow a_{i}\left( y,z,w\left( y,z\right) ,\mathbf{W}\left(
y,z\right) \right) $ from $\mathbb{R}_{y}^{d}\times 
\mathbb{R}_{z}^{d}$ into $\mathbb{R}$ is an element of $L^{\infty }\left( 
\mathbb{R}_{y}^{d},\mathcal{C}_b\left( 
\mathbb{R}
_{z}^{d}\right) \right) ,$ denoted as $a_{i}\left( \cdot, \cdot,w,\mathbf{W}\right) $.

For every $\left( w,\mathbf{W}\right) ,\left( v,\mathbf{V}\right)
\in \mathcal{C}_b
\left( \mathbb{R}
_{y}^{d}\times 
\mathbb{R}_{z}^{d}\right) ^{d+1},$ it results that
\begin{align}
	\left\vert a\left( y,z,w\left(
	y,z\right) ,\mathbf{W}\left( y,z\right) \right) -a\left( y,z,v\left(
	y,z\right) ,\mathbf{V}\left( y,z\right) \right) \right\vert \label{1}\\
	\leq c_{1}\widetilde{\Psi }^{-1}\left( \Phi \left( c_{2}\left\vert w\left(
	y,z\right) -v\left( y,z\right) \right\vert \right) \right) +c_{3}\widetilde{%
		\Phi }^{-1}\left( \Phi \left( c_{4}\left\vert \mathbf{W}\left( y,z\right) -%
	\mathbf{V}\left( y,z\right) \right\vert \right) \right) ;\nonumber\\
	\nonumber\\ 
	\left( a\left( y,z,w\left( y,z\right) ,\mathbf{W}\left( y,z\right) \right)
	-a\left( y,z,w\left( y,z\right) ,\mathbf{V}\left( y,z\right) \right) \right)
	\cdot \left( \mathbf{W}\left( y,z\right) -\mathbf{V}\left( y,z\right) \right)
	\geq 0;\nonumber\\
	\nonumber \\
	\left( a\left( y,z,w\left( y,z\right) ,\mathbf{W}\left( y,z\right) \right)
	\right) \cdot \mathbf{W}\left( y,z\right) \geq \theta \Phi \left( \left\vert 
	\mathbf{W}\left( y,z\right) \right\vert \right),
	\nonumber \\
	\hbox{with }\theta:=\widetilde{\Phi }%
	^{-1}\left( \Phi \left( \underset{t>0}{\min }h\left( t\right) \right)
	\right),\nonumber
\end{align}  for every $z\in 
\mathbb{R}^{d}$ and a.e. $y\in 
\mathbb{R}
^{d},$ where
$a\left( \cdot, \cdot,w,\mathbf{W}\right):=\left( a_{i}\left( \cdot, \cdot,w,\mathbf{W}%
\right) \right) _{1\leq i\leq d}.$ 
\end{corollary}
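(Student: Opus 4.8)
The plan is to read off all three assertions from Proposition~\ref{prop 3.1}. For each $1\le i\le d$ I would introduce the auxiliary function
\[
\psi_i(y,z,t,\zeta):=a_i\bigl(t,\zeta,w(y,z),\mathbf{W}(y,z)\bigr),\qquad (y,z,t,\zeta)\in\mathbb{R}^d_y\times\mathbb{R}^d_z\times\mathbb{R}^d_t\times\mathbb{R}^d_\zeta ,
\]
so that its trace of order $\varepsilon=1$ on $\{t=y,\ \zeta=z\}$ is precisely the map $(y,z)\mapsto a_i(y,z,w(y,z),\mathbf{W}(y,z))$ one wants to make sense of, which is then \emph{by definition} the object $a_i(\cdot,\cdot,w,\mathbf{W})$. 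That this notion is consistent --- that the abstract operator $\psi\mapsto\widetilde\psi^{\varepsilon=1}$ reduces to literal evaluation on the diagonal $t=y,\ \zeta=z$ --- is clear on elementary tensors in the variables $(t,\zeta)$ and $(\lambda_1,\mathbf\lambda_2)$ from the explicit formula for the trace recalled just before Proposition~\ref{prop 3.1}, and the general case follows by the same density and continuity argument already used in the earlier part of the Appendix (cf. also \cite{nnang reit}).

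First I would check that $\psi_i\in\mathcal{C}_b\bigl(\mathbb{R}^d_y\times\mathbb{R}^d_z;L^{\infty}(\mathbb{R}^d_t;\mathcal{C}_b(\mathbb{R}^d_\zeta))\bigr)$. For fixed $(y,z)$ the quantities $w(y,z)$ and $\mathbf{W}(y,z)$ are constants, so by $(H_1)$ and the uniform bound $|a_i(\cdot,\cdot,\zeta,\lambda)|\le c$ established earlier in the Appendix, $\psi_i(y,z,\cdot,\cdot)\in L^{\infty}(\mathbb{R}^d_t;\mathcal{C}_b(\mathbb{R}^d_\zeta))$ with norm $\le c$; joint continuity in $(y,z)$ into this space follows from \eqref{1.3}, since
\[
\bigl\|\psi_i(y,z,\cdot,\cdot)-\psi_i(y',z',\cdot,\cdot)\bigr\|_{L^{\infty}(\mathbb{R}^d_t;\mathcal{C}_b(\mathbb{R}^d_\zeta))}\le c_1\widetilde{\Psi}^{-1}\!\bigl(\Phi(c_2|w(y,z)-w(y',z')|)\bigr)+c_3\widetilde{\Phi}^{-1}\!\bigl(\Phi(c_4|\mathbf{W}(y,z)-\mathbf{W}(y',z')|)\bigr),
\]
and the right-hand side vanishes as $(y',z')\to(y,z)$ because $w,\mathbf{W}$ are continuous and $\Phi,\widetilde\Psi^{-1},\widetilde\Phi^{-1}$ are continuous with value $0$ at $0$. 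Proposition~\ref{prop 3.1} then gives $a_i(\cdot,\cdot,w,\mathbf{W})\in L^{\infty}(\mathbb{R}^d_y;\mathcal{C}_b(\mathbb{R}^d_z))$, which is the first assertion.

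For the three inequalities I would apply the positivity part of Proposition~\ref{prop 3.1} to the auxiliary functions obtained by transposing the right-hand side of each claimed inequality: for \eqref{1}, to
\[
c_1\widetilde\Psi^{-1}\!\bigl(\Phi(c_2|w(y,z)-v(y,z)|)\bigr)+c_3\widetilde\Phi^{-1}\!\bigl(\Phi(c_4|\mathbf{W}(y,z)-\mathbf{V}(y,z)|)\bigr)-\bigl|a(t,\zeta,w(y,z),\mathbf{W}(y,z))-a(t,\zeta,v(y,z),\mathbf{V}(y,z))\bigr|,
\]
which is $\ge0$ for all $\zeta$ and a.e.\ $t$ by \eqref{1.3}; for the monotonicity inequality, to $\bigl(a(t,\zeta,w(y,z),\mathbf{W}(y,z))-a(t,\zeta,w(y,z),\mathbf{V}(y,z))\bigr)\cdot(\mathbf{W}(y,z)-\mathbf{V}(y,z))$, which is $\ge0$ a.e.\ by $(H_4)$ and hence, using continuity of $a$ in its second variable, for every $\zeta$ and a.e.\ $t$; and for the coercivity, to $a(t,\zeta,w(y,z),\mathbf{W}(y,z))\cdot\mathbf{W}(y,z)-\theta\,\Phi(|\mathbf{W}(y,z)|)$, where one first notes that $(H_3)$ together with the monotone decrease of $h$ and $\min_{t\ge0}h(t)>0$ gives $a(y,z,\zeta,\lambda)\cdot\lambda\ge\widetilde\Phi^{-1}(\Phi(h(|\zeta|)))\,\Phi(|\lambda|)\ge\theta\,\Phi(|\lambda|)$ with $\theta$ as in \eqref{theta}. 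Each of these functions lies in $\mathcal{C}_b(\mathbb{R}^d_y\times\mathbb{R}^d_z;L^{\infty}(\mathbb{R}^d_t;\mathcal{C}_b(\mathbb{R}^d_\zeta)))$ by the same estimate as for $\psi_i$ (the scalar prefactors are $(t,\zeta)$-independent and continuous, the dot products are with continuous bounded functions), so Proposition~\ref{prop 3.1} yields that its trace of order $\varepsilon=1$ --- equal to the left side minus the right side of the corresponding inequality evaluated at $(y,z,w(y,z),\mathbf{W}(y,z))$ --- is $\ge0$ for every $z$ and a.e.\ $y$, as desired. The only genuinely delicate point is the bookkeeping just invoked: that $\psi\mapsto\widetilde\psi^{\varepsilon=1}$ really computes the pointwise composition, and that the ``a.e.\ in $(y,z)$'' form of $(H_3)$--$(H_4)$ can be promoted to the ``for every $\zeta$, a.e.\ $t$'' form required by the proposition; both are handled exactly as in the periodic situation treated earlier in the Appendix.
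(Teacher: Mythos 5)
Your proposal follows essentially the same route as the paper: you introduce the same auxiliary function $\psi_i(y,z,t,\zeta)=a_i(t,\zeta,w(y,z),\mathbf{W}(y,z))$, verify via the uniform bound and \eqref{1.3} that it lies in $\mathcal{C}_b(\mathbb{R}^d_y\times\mathbb{R}^d_z;L^{\infty}(\mathbb{R}^d_t;\mathcal{C}_b(\mathbb{R}^d_\zeta)))$, and invoke Proposition~\ref{prop 3.1} both for the membership statement and (through its positivity-preservation part, applied to the differences of the two sides) for the three inequalities. The paper's proof is terser on the inequalities, but your elaboration is exactly what its ``immediate consequences of $(H_1)$--$(H_4)$ and Proposition~\ref{prop 3.1}'' amounts to, so the argument is correct and not genuinely different.
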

\begin{proof}
Let $\left( w,\mathbf{W}\right) \in \mathcal{C}_b
\left(\mathbb{R}_{y}^{d}\times 
\mathbb{R}
_{z}^{d}\right)^{d+1}$. From $\left( H_{1}\right)
-\left( H_{4}\right), $ the function $\psi:
\mathbb{R}_{y}^{d}\times 
\mathbb{R}_{z}^{d}\times 
\mathbb{R}_{t}^{d}\times 
\mathbb{R}
_{\zeta }^{d} \to
\mathbb{R}
$ defined by $\psi \left( y,z,t,\zeta \right) :=a_{i}\left( t,\zeta,w(
y,z),\mathbf{W}\left( y,z\right) \right),$
($1\leq i\leq d$),
is an element of  $\mathcal{C}_b\left( 
\mathbb{R}
_{y}^{d}\times 
\mathbb{R}
_{z}^{d};L^{\infty }\left( 
\mathbb{R}
_{t}^{d},\mathcal{C}_b\left( 
\mathbb{R}
_{\zeta }^{d}\right) \right) \right) $ 
and we define its trace ${\widetilde\psi
	^{\varepsilon =1}}$ as above and we get ${\widetilde\psi ^{\varepsilon =1}}\in L^{\infty
}\left( 
\mathbb{R}
_{y}^{d},\mathcal{C}_b\left( 
\mathbb{R}
_{z}^{d}\right) \right) .$ Then, the inequalities are immediate consequences of $\left(
H_{1}\right) -\left( H_{4}\right) $ and of Proposition \ref{prop 3.1}.
\end{proof}

In order to give a meaning to $a^{\varepsilon }\left( \cdot, \cdot,w^{\varepsilon },%
\mathbf{W}^{\varepsilon }\right) $ for $\left( w,\mathbf{W}\right) \in 
\mathcal{C}\left( \overline{\Omega };\mathcal{C}_b\left( 
\mathbb{R}_{y}^{d}\times 
\mathbb{R}
_{z}^{d}\right) ^{d+1}\right)$, for every $\varepsilon >0$, the following result can be proven:

\begin{proposition} Let $\varepsilon >0$ and let $a:= (a_i)_{1\leq i \leq d}: \mathbb R^d \times \mathbb R^d \times \mathbb R \times \mathbb R^d \to \mathbb R^d$ satisfy  $(H_1)-(H_4)$.
Let $\left( w,\mathbf{W}\right) \in \mathcal{C}\left( \overline{\Omega };%
\mathcal{C}_b\left( 
\mathbb{R}
_{y}^{d}\times 
\mathbb{R}
_{z}^{d}\right) ^{d+1}\right) $. The function $%
x \in \Omega \rightarrow a_{i}\left( \frac{x}{\varepsilon },\frac{x}{\varepsilon ^{2}}%
,w\left( x,\frac{x}{\varepsilon },\frac{x}{\varepsilon ^{2}}\right) ,\mathbf{%
	W}\left( x,\frac{x}{\varepsilon },\frac{x}{\varepsilon ^{2}}\right) \right) \in
\mathbb{R}
,$ denoted  by $a_{i}\left( \cdot, \cdot, w^{\varepsilon },\mathbf{W}^{\varepsilon }\right)
,$ is well defined and belongs to $L^{\infty }\left( \Omega \right) .$
\end{proposition}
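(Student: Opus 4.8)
The plan is to reduce the statement to the trace constructions of this Appendix, in particular Corollary \ref{cor42} and Proposition \ref{prop 3.1}, combined with a continuity argument in the slow variable $x$. First I would fix $x\in\overline\Omega$: since $(w,\mathbf W)\in\mathcal C(\overline\Omega;\mathcal C_b(\mathbb R^d_y\times\mathbb R^d_z))^{d+1}$, the pair $(w(x,\cdot,\cdot),\mathbf W(x,\cdot,\cdot))$ lies in $\mathcal C_b(\mathbb R^d_y\times\mathbb R^d_z)^{d+1}$, so Corollary \ref{cor42} yields, for each $1\le i\le d$, that the function $(y,z)\mapsto a_i(y,z,w(x,y,z),\mathbf W(x,y,z))$ --- abbreviated $a_i(\cdot,\cdot,w(x,\cdot,\cdot),\mathbf W(x,\cdot,\cdot))$ --- is a well-defined element of $L^\infty(\mathbb R^d_y;\mathcal C_b(\mathbb R^d_z))$, whose norm is controlled, applying \eqref{1} with $(v,\mathbf V)=(0,\omega)$ and using $a(\cdot,\cdot,0,\omega)\in L^\infty(\mathbb R^d_y\times\mathbb R^d_z)$, by a quantity depending only on $\sup_{x\in\overline\Omega}\|w(x,\cdot,\cdot)\|_\infty$ and $\sup_{x\in\overline\Omega}\|\mathbf W(x,\cdot,\cdot)\|_\infty$, both finite since $\overline\Omega$ is compact; in particular this norm is bounded by a constant $C$ independent of $x$.

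Next I would show that $x\mapsto a_i(\cdot,\cdot,w(x,\cdot,\cdot),\mathbf W(x,\cdot,\cdot))$ is continuous from $\overline\Omega$ into $L^\infty(\mathbb R^d_y;\mathcal C_b(\mathbb R^d_z))$, i.e. $a_i(\cdot,\cdot,w,\mathbf W)\in\mathcal C(\overline\Omega;L^\infty(\mathbb R^d_y;\mathcal C_b(\mathbb R^d_z)))$. For $x,x'\in\overline\Omega$, applying the first inequality of Corollary \ref{cor42} with $(w(x,\cdot,\cdot),\mathbf W(x,\cdot,\cdot))$ and $(w(x',\cdot,\cdot),\mathbf W(x',\cdot,\cdot))$, and using that $\widetilde\Psi^{-1}$, $\widetilde\Phi^{-1}$, $\Phi$ are increasing, one obtains a pointwise (a.e. in $y$, every $z$) bound whose right-hand side is dominated by
\begin{align*}
c_1\widetilde\Psi^{-1}\!\left(\Phi\!\left(c_2\|w(x,\cdot,\cdot)-w(x',\cdot,\cdot)\|_\infty\right)\right) \\
+c_3\widetilde\Phi^{-1}\!\left(\Phi\!\left(c_4\|\mathbf W(x,\cdot,\cdot)-\mathbf W(x',\cdot,\cdot)\|_\infty\right)\right),
\end{align*}
which is independent of $(y,z)$ and therefore also bounds the $L^\infty(\mathbb R^d_y;\mathcal C_b(\mathbb R^d_z))$ norm of the difference. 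Since $\overline\Omega$ is compact and $w,\mathbf W\in\mathcal C(\overline\Omega;\mathcal C_b(\mathbb R^d_y\times\mathbb R^d_z))$, the maps $x\mapsto w(x,\cdot,\cdot)$ and $x\mapsto\mathbf W(x,\cdot,\cdot)$ are uniformly continuous for the sup norm; together with $\Phi(0)=0$ and the continuity at the origin of the increasing homeomorphisms $\widetilde\Psi^{-1}$ and $\widetilde\Phi^{-1}$ (see $(H_2)$), the displayed quantity tends to $0$ as $|x-x'|\to0$, uniformly, which gives the claimed continuity.

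Finally, with $a_i(\cdot,\cdot,w,\mathbf W)\in\mathcal C(\overline\Omega;L^\infty(\mathbb R^d_y;\mathcal C_b(\mathbb R^d_z)))$ at hand, I would apply to it the trace operator of order $\varepsilon$ on this class of functions, whose well-posedness is recalled in subsection \ref{traces} (cf. \cite{nnang reit}, \cite{FNZOpuscula}) and which, through the rescalings $y=x/\varepsilon$, $z=x/\varepsilon^2$, is produced by the density/extension arguments developed above in this Appendix (the $\varepsilon=1$ case of the relevant extension being Proposition \ref{prop 3.1}). Its value at (a.e.) $x\in\Omega$ is exactly $a_i(x/\varepsilon,x/\varepsilon^2,w(x,x/\varepsilon,x/\varepsilon^2),\mathbf W(x,x/\varepsilon,x/\varepsilon^2))$, so $a_i^\varepsilon(\cdot,\cdot,w^\varepsilon,\mathbf W^\varepsilon)$ is well defined and measurable, and since for a.e. $x\in\Omega$
\[
\big|a_i^\varepsilon(\cdot,\cdot,w^\varepsilon,\mathbf W^\varepsilon)(x)\big|\le\big\|a_i(\cdot,\cdot,w(x,\cdot,\cdot),\mathbf W(x,\cdot,\cdot))\big\|_{L^\infty(\mathbb R^d_y;\mathcal C_b(\mathbb R^d_z))}\le C,
\]
it belongs to $L^\infty(\Omega)$. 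I expect the only genuinely non-routine step to be the continuity argument of the second paragraph: one has to transfer the modulus of continuity of $w$ and $\mathbf W$ in $x$ through the $N$-function nonlinearities of \eqref{1} so as to obtain smallness uniform in $(y,z)$, which is precisely what places $a_i(\cdot,\cdot,w,\mathbf W)$ in $\mathcal C(\overline\Omega;L^\infty(\mathbb R^d_y;\mathcal C_b(\mathbb R^d_z)))$ and thereby legitimizes forming its $\varepsilon$-trace in $L^\infty(\Omega)$.
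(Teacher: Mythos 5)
Your proposal is correct and follows essentially the same route as the paper: fix $x$ and invoke Corollary \ref{cor42} to place $a_i(\cdot,\cdot,w(x,\cdot,\cdot),\mathbf W(x,\cdot,\cdot))$ in $L^\infty(\mathbb R^d_y;\mathcal C_b(\mathbb R^d_z))$, use the estimate \eqref{1} to upgrade this to membership in $\mathcal C(\overline\Omega;L^\infty(\mathbb R^d_y;\mathcal C_b(\mathbb R^d_z)))$, and then conclude via the trace machinery of Proposition \ref{prop 3.1} and subsection \ref{traces}. The extra details you supply (the uniform bound obtained from \eqref{1} with $(v,\mathbf V)=(0,\omega)$ together with $(H_1)(ii)$, and the passage of the modulus of continuity of $w,\mathbf W$ through the $N$-function nonlinearities) are exactly what the paper leaves implicit.
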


\begin{proof}
Let $x\in \Omega $ fixed. Let $a_{i}\left( \cdot, \cdot,w\left( x,\cdot,\cdot\right) ,\mathbf{%
	W}\left( x,\cdot,\cdot\right) \right) \in L^{\infty }\left( \mathbb{R}_{y}^{d},\mathcal{C}_b\left( 
\mathbb{R}_{z}^{d}\right) \right),$ then, by Corollary \ref{cor42}, the function $\left( y,z\right) \rightarrow
a_{i}\left( y,z,w\left( x,y,z\right) ,\mathbf{W}\left( x,y,z\right) \right) $ is well
defined. Using \eqref{1} we get

\begin{align*}
	\left\Vert a_{i}\left( \cdot, \cdot,w\left( x,\cdot,\cdot\right) ,\mathbf{W}\left(
	x,\cdot,\cdot\right) \right) -a_{i}\left( \cdot, \cdot,w\left( x^{\prime },\cdot,\cdot\right) ,%
	\mathbf{W}\left( x^{\prime },\cdot,\cdot\right) \right) \right\Vert_{L^\infty(
		\mathbb{R}
		_{z}^{d}) }\leq  
	\\ 
	c_{1}\widetilde{\Psi }^{-1}\left( \Phi \left( c_{2}\left\Vert w\left(
	x,\cdot, \cdot\right) -w\left( x',\cdot, \cdot\right) \right\Vert_{\infty }\right)
	\right) + \\ 
	c_{3}\widetilde{\Phi }^{-1}\left( \Phi \left( c_{4}\left\Vert \mathbf{W}%
	\left( x,\cdot, \cdot\right) -\mathbf{W}\left( x',\cdot, \cdot\right) \right\Vert_{\infty }\right) \right),\; x,x' \in \overline{\Omega }.
\end{align*}

\noindent Thus the function $x \in \Omega \rightarrow a_{i}\left( \cdot, \cdot,w\left( x,\cdot, \cdot\right) ,%
\mathbf{W}\left( x,\cdot, \cdot\right) \right) \in$ $\mathcal{C}\left( 
\overline{\Omega };L^{\infty }\left( 
\mathbb{R}_{y}^{d};{\mathcal C}_b(\mathbb R^d_z)\right) \right),$ hence, by Proposition \ref{prop 3.1} the statement follows. 
\end{proof}

\bigskip

\noindent {\bf Acknowledgments}

\noindent The first author acknowledges the support of ICTP-INdAM (2018) and of the University of Salerno which the last author was affiliated with, when
this work started. The last author is a member of INdAM-GNAMPA, whose support is gratefully acknowledged. The authors are indebted with the anonymous referee for his/her comments.
\bigskip

\smallskip

\end{document}